\definecolor{light-gray}{gray}{0.95}
\newcommand{\pfrac}[2]{\genfrac{}{}{}{1}{#1}{#2}}
\newtheorem{theorem}{Theorem}[section]
\newtheorem{lemma}[theorem]{Lemma}
\newtheorem{proposition}[theorem]{Proposition}
\newtheorem{corollary}[theorem]{Corollary}
\newtheorem{remark}[theorem]{Remark}
\newtheorem{definition}{Definition}[section]
\newtheorem{assumption}{Assumption}[section]
\numberwithin{equation}{section}
\newcommand{\mc}[1]{{\mathcal #1}}
\newcommand{\bb}[1]{{\mathbb #1}}
\newcommand{\<}{\langle}
\renewcommand{\>}{\rangle}
\renewcommand{\epsilon}{\varepsilon}
\newcommand{\p}{\partial}
\newcommand{\eps}{\varepsilon}
\def\centerarc[#1](#2)(#3:#4:#5){\draw[#1] ($(#2)+({#5*cos(#3)},{#5*sin(#3)})$) arc (#3:#4:#5);}
\newcommand{\XX}{\mathbb{X}}
\newcommand{\Y}{\mathcal{Y}}
\newcommand{\at}[2]{\genfrac{}{}{0pt}{}{#1}{#2}}
\newcommand{\Ind}[1]{\textbf{1}{_{\lbrace{#1}\rbrace}}}           
\newcommand{\A}{\mathcal{A}}
\title[Fluctuations of  a slowed boundary symmetric exclusion process]{Non-equilibrium and stationary fluctuations of a slowed boundary symmetric exclusion}
\keywords{Symmetric exclusion, slowed boundary, non-equilibrium fluctuations, stationary fluctuations.}
\subjclass[2010]{60K35}
\author{Tertuliano Franco}
\address{UFBA\\
 Instituto de Matem\'atica, Campus de Ondina, Av. Adhemar de Barros, S/N. CEP 40170-110\\
Salvador, Brasil}
\email{tertu@ufba.br}
\thanks{}
\author{Patr\'{\i}cia Gon\c{c}alves}
\address{\noindent Center for Mathematical Analysis,  Geometry and Dynamical Systems \\
Instituto Superior T\'ecnico, Universidade de Lisboa \\
Av. Rovisco Pais, 1049-001 Lisboa, Portugal}
\email{patricia.goncalves@math.tecnico.ulisboa.pt}
\author{Adriana Neumann}
\address{UFRGS, Instituto de Matem\'atica, Campus do Vale, Av. Bento Gon\c calves, 9500. CEP 91509-900, Porto Alegre, Brasil}
\email{aneumann@mat.ufrgs.br}
\thanks{}
\begin{document}

\begin{abstract}
We consider a one-dimensional symmetric   simple exclusion process in contact with slowed reservoirs: at the left (resp. right) boundary, particles are either created or removed  at rates given by  $\alpha/n$ or $(1-\alpha)/n$ (resp. $\beta/n$ or $(1-\beta)/n$) where $\alpha, \beta>0$ and $n$ is a scaling   parameter. We obtain the non-equilibrium fluctuations  and consequently the non-equilibrium stationary fluctuations.
\end{abstract}

\maketitle


\section{Introduction}\label{s1}

One of the most intriguing problems in the field of interacting particle systems is the rigorous mathematical derivation of the non-equilibrium fluctuations of a system around its hydrodynamic limit. The main difficulty one faces when trying to show that result is the fact that the systems exhibit long range space-time correlations and for that reason the non-equilibrium fluctuations   have only  been derived for very few models, see, for example, \cite{Yau1992,jaralandim2008,Ravi1992} and references therein. 
Moreover, the study of non-equilibrium steady states has attracted a lot of attention over the last twenty years and up to now the microscopic description of these states is still {incipient},  see, for example,  the review~\cite{blytheevans}.

In \cite{lmo}, the non-equilibrium stationary fluctuations for the symmetric simple exclusion in contact with fixed reservoirs {were} derived as a simple consequence of  its non-equilibrium fluctuations. In this article we examine the dynamical non-equilibrium fluctuations of the symmetric simple exclusion process in contact with \emph{slowed} reservoirs. In this model the exclusion dynamics is superposed with a Glauber dynamics at each end point of a one-dimensional lattice with $n-1$ points. According to this dynamics, particles perform continuous time symmetric random walks in the discrete lattice $\{1,\ldots, n-1\}$ to which we call bulk, in such a way that two particles cannot occupy the same site at a given time, the {so-called} exclusion rule. Moreover, at the end points of the bulk we add two extra sites, namely $0$ and  $n$, corresponding to two different reservoirs  where  particles can be created or annihilated at a certain rate, which is slowed with respect to the jump rate in the bulk.

Our main interest is the  derivation of the non-equilibrium fluctuations and the non-equilibrium stationary fluctuations for this model.  We chose a regime in which the Glauber dynamics is slowed enough so that the hydrodynamic behavior of the system is macroscopically different from  the case in which the Glauber dynamics is not slowed, as in \cite{lmo}, for instance. More precisely, in \cite{lmo} the Glauber dynamics is defined in such a way that particles can get in and out of the system at rate $\alpha$ and $\beta$, respectively. In our model, these rates are slowed by a parameter $n$. As a consequence of  having slowed reservoirs,  the hydrodynamical profile in our model is different from the one of \cite{lmo}, the latter being a solution of the heat equation with Dirichlet boundary conditions  in which the solution is fixed at the boundaries by  $ \rho(t,0) = \alpha$ and 
$\rho(t,1) = \beta$.  In the model considered here, it has been proved in \cite{bmns} that the hydrodynamical profile  is a solution of the heat equation with a type of Robin boundary conditions in which the value of the profile at the boundaries is not fixed, but instead it fixes the values of its space derivative, namely: $\p_u \rho(t,0) = \rho(t,0)-\alpha$ and $\p_u \rho(t,1) = \beta-\rho(t,1)$, see \eqref{hydroeq}. These boundary conditions reflect the fact that the mass transfer, given by $\p_u \rho(t,\cdot)$, at the boundaries is proportional to  the difference of concentration. Contrarily to what happens in the model of \cite{lmo} which fixes the density at the reservoirs, in our case we do not have $\rho(t,0)=\alpha$, so that the term $\rho(t,0)-\alpha$ represents the difference of concentration between the bulk and the boundary.   We also note that in \cite{bmns} it  has been analyzed the hydrodynamic limit for  a generalization of our model. There, the rates at the reservoirs are slowed with respect to the rate in the bulk by a factor $n^\theta$, where $\theta>0$, and our model corresponds to the choice  $\theta=1$. We note that, as proved in \cite{bmns}, for $\theta<1$ (resp. $\theta>1$) the hydrodynamical profile is the unique weak solution of the heat equation with Dirichlet (resp. Neumann) boundary conditions.

We would also like to refer other important articles on this subject as, for example,   \cite{mariaeulalia1,mariaeulalia3,mariaeulalia2}, where the authors consider models with  slowed boundaries but  one boundary acts only for the creation of particles and the other boundary acts only on the annihilation of particles.  As a consequence, the density of particles in the reservoirs remains the same, and the hydrodynamical profile  in such case is a solution of the heat equation with Dirichlet boundary conditions.

We observe that when $\alpha=\beta=\rho$, the reservoirs do not induce any current in the system contrarily to what happens if, for example, $\alpha<\beta$, since in this case particles can get in the system more easily from the right boundary, and there is a current of particles, due to the reservoirs, from the right reservoir  to the left reservoir.  In the case $\alpha=\beta=\rho$, the Bernoulli product measures given by $\nu_\rho\{\eta:\eta(x)=1\}=\rho$ are invariant and due to the absence of an external current, they are called equilibrium measures.  However, in the non-equilibrium scenario, that is when $\alpha \neq \beta$, this fact is no longer true. Nevertheless, there exists a unique stationary measure that we denote by $\mu_{ss}$.  Since $\alpha\neq\beta$, the reservoirs induce a current of particles in the system and for that reason $\mu_{ss}$ is a non-equilibrium stationary measure. This measure has been partially characterized in, for example, \cite{d}
 and it has been proved in \cite[Theorem 2.2]{bmns} that it is associated to a profile $\bar\rho(\cdot)$ which is stationary with respect to the hydrodynamic equation, so that $\bar\rho(\cdot)$ is linear and $\bar\rho(0)=\tfrac{2\alpha+\beta}{3}$ and $\bar\rho(1)=\tfrac{\alpha+2\beta}{3}$.  We emphasize here that, as one can see from the previous properties on the stationary profile, in our model the density at the reservoirs is not fixed as being $\alpha$ at $u=0$ and $\beta$ at $u=1$.

 To analyze the non-equilibrium fluctuations we consider a space of smooth test functions $f$ satisfying the boundary conditions of the homogeneous hydrodynamic equation, that is, the hydrodynamic equation with $\alpha=\beta=0$.  Our setting for initial states is quite general and can be described as follows. We consider  initial measures $\mu_n$ associated to a measurable profile $\rho_0:[0,1]\to\bb [0,1]$ in the sense of \eqref{eq3}. Moreover, denoting for $x\in\{1,\cdots,n-1\}$, $\rho_0^n(x)=\bb E_{\mu}[\eta(x)]$, we ask $\rho^n_0(\cdot)$ to be close to the given $\rho_0(\cdot)$ as stated in Assumption \ref{assumption1} and we also ask that   the corresponding  space correlations to vanish as $n\to\infty$, as stated in Assumption \ref{assumption2}. In this case we show that the sequence of density fluctuation fields  is tight and we characterize its limiting points so that, for a fixed time $t$, the solution is given by  the sum of a Gaussian random variable and the initial condition, see the relation \eqref{characterization}. Besides that, if on top of  the aforementioned assumptions we ask that at the initial time the sequence of density fields converges to a mean-zero Gaussian process, then the convergence takes place and the limiting process is an Ornstein-Uhlenbeck process solution of \eqref{O.U.}. We also note that from our results we can obtain the non-equilibrium fluctuations starting from a local Gibbs state. More precisely, if we fix a profile $\gamma:[0,1]\to[0,1]$ and  consider $\mu_n$ as the Bernoulli product measure such that $\mu_n\{\eta:\eta(x)=1\}=\gamma(\tfrac{x}{n})$, then the result also holds,  leading to an Ornstein-Uhlenbeck process in the limit. 
 
 As a consequence of the  previous results we can derive the non-equilibrium stationary fluctuations. For that purpose we just have to check that the imposed conditions on the initial states are satisfied by the non-equilibrium stationary state and to recover the corresponding covariance we perform a careful analysis of the time limit of the  covariance obtained in the general  non-equilibrium scenario. 

To prove the non-equilibrium fluctuations, since we consider the system starting from general initial measures, which can develop long range correlations,  we need  a sharp bound on the space correlations in order  to make our method work. For that purpose we make a careful analysis of solutions of a bidimensional  discrete scheme which has non-trivial boundary conditions. 

As a future work we plan to derive our results for the models studied in \cite{bmns} for the case  $\theta\neq 1$. The main difficulty we will face is the derivation of sharp bounds on the space correlations of the system, and we will also need  to perform  a careful analysis of some additive functionals associated to the system.

Here follows an outline of this article. In Section \ref{s2} we present the model, we recall its hydrodynamic limits and we enunciate our results, namely: Theorem \ref{non_eq_flu}, where we state the non-equilibrium fluctuations for general initial measures;  Theorem \ref{thm27}, where we state the non-equilibrium fluctuations when the limit is an Ornstein-Uhlenbeck process for which the initial measures have to satisfy a Gaussian central limit theorem and, as a consequence of the previous results; Theorem \ref{flustat} where we state the non-equilibrium stationary fluctuations. In Section \ref{sec:sem} we present some  necessary results related to the hydrodynamic equation and its semigroup. In Sections \ref{s3}, \ref{s4} and \ref{s5} we prove, respectively, Theorems \ref{thm27}, \ref{non_eq_flu} and \ref{flustat}. Section \ref{s6} is devoted to tightness and Section \ref{s7} is devoted to space correlations estimates.  
\section{Statement of results}\label{s2}

\subsection{The model}

Given $n\geq{1}$ let  $\Sigma_n=\{1,\ldots,n-1\}$. The symmetric simple exclusion process with slow boundaries is a Markov process $\{\eta_t:\,t\geq{0}\}$ with state space $\Omega_n:=\{0,1\}^{\Sigma_n}$. We denote the configurations of the state space $\Omega_n$ by $\eta$, so that for $x\in\Sigma_n$,  $\eta(x)=0$ means that the site $x$ is vacant while $\eta(x)=1$ means that the site $x$ is occupied.  We characterize this Markov process  in terms of its infinitesimal generator $\mc L_{n}$ as follows. Let $\mc L_{n}=\mc L_{n,o}+\mc L_{n,b}$, where, for a given a function $f:\Omega_n\rightarrow \bb{R}$,  we have 
\begin{equation}\label{lnb}
\begin{split}
(\mc L_{n,o}f)(\eta)\;=\;
\sum_{x=1}^{n-2}\Big(f(\eta^{x,x+1})-f(\eta)\Big)\,, 
\end{split}
\end{equation}
\begin{equation}\label{lno}
(\mc L_{n,b}f)(\eta)\;=\;\frac{1}{n}
\sum_{x\in\{1,n-1\}} \Big[{r_x}(1-\eta(x))+(1-r_x)\eta(x)\Big]\Big(f(\sigma^{x} \eta)-f(\eta)\Big)\,,
\end{equation}
with $r_1=\alpha$ and  $r_{n-1}=\beta$. Above,  for $x\in\{1,\ldots, n-2\}$, the configuration $\eta^{x,x+1}$ is obtained from $\eta$ by exchanging the occupation variables $\eta(x)$ and $\eta(x+1)$, i.e.,
\begin{equation*}
(\eta^{x,x+1})(y)\;=\;\left\{\begin{array}{cl}
\eta(x+1)\,,& \mbox{if}\,\,\, y=x\,,\\
\eta(x)\,,& \mbox{if} \,\,\,y=x+1\,,\\
\eta(y)\,,& \mbox{otherwise,}
\end{array}
\right.
\end{equation*}
 and for $x\in\{1,n-1\}$ the configuration  $\sigma^x\eta$ is obtained from $\eta$ by flipping  the occupation  variable $\eta(x)$, i.e,
  \begin{equation*}
(\sigma^x\eta)(y)=\left\{\begin{array}{cl}
1-\eta(y)\,,& \mbox{if}\,\,\, y=x\,,\\
\eta(y)\,,& \mbox{otherwise.}
\end{array}
\right.
\end{equation*}
The dynamics of this model can be described in words in the following way. In the bulk, particles move accordingly to continuous time symmetric random walks under the additional exclusion rule: whenever a particle tries to jump to an occupied site, such jump is suppressed. Additionally, at the left boundary, particles can be created (resp. removed) at rate $\alpha/ n$ (resp. at rate $(1-\alpha)/n$) and at the right boundary, particles can be created (resp. removed) at rate $\beta/ n$ (resp. at rate $(1-\beta)/ n$).  See Figure~\ref{fig1} for an illustration.
\begin{figure}[!htb]
\centering
\begin{tikzpicture}
\centerarc[thick,<-](0.5,0.3)(10:170:0.45);
\centerarc[thick,->](0.5,-0.3)(-10:-170:0.45);
\centerarc[thick,->](2.5,-0.3)(-10:-170:0.45);
\centerarc[thick,<-](3.5,0.3)(10:170:0.45);
\centerarc[thick,<-](8.5,-0.3)(-10:-170:0.45);
\centerarc[thick,->](8.5,0.3)(10:170:0.45);
\draw (1,0) -- (8,0);

\shade[ball color=black](1,0) circle (0.25);
\shade[ball color=black](3,0) circle (0.25);
\shade[ball color=black](5,0) circle (0.25);
\shade[ball color=black](6,0) circle (0.25);
\shade[ball color=black](8,0) circle (0.25);

\filldraw[fill=white, draw=black]
(2,0) circle (.25)
(4,0) circle (.25)
(7,0) circle (.25);

\draw (1.3,-0.05) node[anchor=north] {\small  $\bf 1$}
(2.3,-0.05) node[anchor=north] {\small $\bf 2 $}
(8.5,-0.05) node[anchor=north] {\small $\bf n\!-\!1$};
\draw (0.5,0.8) node[anchor=south]{$\alpha/n$};
\draw (0.5,-0.8) node[anchor=north]{$(1-\alpha)/n$};
\draw (3.5,0.8) node[anchor=south]{$1$};
\draw (8.5,-0.8) node[anchor=north]{$(1-\beta)/n$};
\draw (8.5,0.8) node[anchor=south]{$\beta/n$};
\draw (2.5,-0.8) node[anchor=north]{$1$};
\end{tikzpicture}
\caption{Illustration of  jump rates. The leftmost and rightmost rates are the entrance/exiting rates.}\label{fig1}
\end{figure}
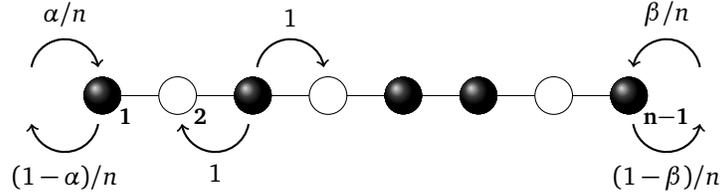
Note that when $\alpha=\beta=\rho$, for which there is no external current induced by the reservoirs, it is easy to check that  the Bernoulli product measures given by $\nu_\rho\{\eta:\eta(x)=1\}=\rho$ are invariant. However, when $\alpha \neq \beta$ this is no longer true.  Nevertheless, for $\alpha\neq \beta$, there is a unique stationary measure of the system, that we denote by  $\mu_{ss}$, which is no longer a product measure. For further properties on this measure we refer the reader to, for example, \cite{d}. In particular, it is shown in \cite[Theorem 2.2]{bmns} that this measure is associated to a profile $\bar\rho(\cdot)$ which is stationary with respect to the hydrodynamic equation, so that $\bar\rho(\cdot)$ is linear and $\bar\rho(0)=\alpha+\tfrac{\beta-\alpha}{3}$ and $\bar\rho(1)=\alpha+2\tfrac{\beta-\alpha}{3}$. We observe that in the case where the reservoirs are not slowed, as in \cite{lmo}, the  stationary profile associated to the hydrodynamic equation, which is the heat equation with Dirichlet boundary conditions,  is the  linear interpolation between $\alpha$ and $\beta$.

\subsection{Hydrodynamic limit}
Fix a measurable density profile $ \rho_0: [0,1] \rightarrow [0,1 ]$. For each  $n \in \bb N$, let $\mu_n$ be a probability measure on $\Omega_n$.  We say that the sequence $\{\mu_n\}_{n\in \bb N}$ is \textit{associated} to the profile $\rho_0(\cdot)$ if,  for any $ \delta >0 $ and any continuous function $ f: [0,1]\to\bb R $ the following limit holds:
\begin{equation}\label{eq3}
\lim_{n\to\infty}
\mu_{n} \Bigg[ \eta:\, \Big| \frac {1}{n} \sum_{x = 1}^{n\!-\!1} f(\pfrac{x}{n})\, \eta(x)
- \int f(u)\, \rho_0(u)\, du \Big| > \delta\Bigg] \;=\; 0\,.
\end{equation}
Fix $T>0$. Let  $\mc D([0,T],\Omega_n)$ be  the space of trajectories which are right continuous, with left limits and  taking values in $\Omega_n$.
Denote by $\bb P_{\mu_{n}} $ the probability on  $\mc D([0,T],\Omega_n)$ induced by the Markov process with generator $n^2\mc L_n$ and the initial measure $\mu_n$ and denote by $\bb E_{\mu_n}$ the expectation with respect to $\bb P_{\mu_n}$.
From \cite{bmns} we have the following result, known in the literature as \emph{hydrodynamic limit}.
\begin{theorem}[Hydrodynamic limit, \cite{bmns}]
\quad

 Suppose that the 
sequence $\{\mu_n\}_{n\in \bb N}$ is \textit{associated} to a profile $\rho_0(\cdot)$ in the sense of \eqref{eq3}. 
Then,  for each $ t \in [0,T] $, for any $ \delta >0 $ and any continuous function $ f:[0,1]\to\bb R $, 
\begin{equation*}
\lim_{ n \rightarrow +\infty }
\bb P_{\mu_{n}} \Bigg[\eta_{\cdot} : \Big\vert \frac{1}{n} \sum_{x =1}^{n-1}
f(\pfrac{x}{n})\, \eta_{tn^2}(x) - \int f(u)\, \rho(t,u)\, du\, \Big\vert
> \delta \Bigg] \;=\; 0\,,
\end{equation*}
 where  $\rho(t,\cdot)$ is the unique weak solution of the 
 heat equation with certain   Robin boundary conditions given by
\begin{equation}\label{hydroeq}
\begin{cases}
\p_t \rho(t,u)= \p_u^2 \rho(t,u)\,, & \textrm{ for } t>0\,,\, u\in (0,1)\,,\\
\p_u \rho(t,0) = \rho(t,0)-\alpha\,, & \textrm{ for } t>0\,,\\
\p_u \rho(t,1) = \beta-\rho(t,1)\,, & \textrm{ for } t>0\,,\\
\rho(0,u)=\rho_0(u)\,,& u\in [0,1]\,.
\end{cases}
\end{equation}
\end{theorem}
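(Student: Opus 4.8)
The plan is to follow the standard martingale/entropy approach to hydrodynamic limits (as in Kipnis--Landim), adapted to the slowed boundary. I introduce the empirical measure
\begin{equation*}
\pi^n_t(du)\;=\;\frac1n\sum_{x=1}^{n-1}\eta_{tn^2}(x)\,\delta_{x/n}(du)\,,
\end{equation*}
which takes values in the compact set of positive measures on $[0,1]$ of total mass at most one, and for $G\in C^2([0,1])$ I write $\pi^n_t(G)$ for its integral against $\pi^n_t$. By Dynkin's formula,
\begin{equation*}
M^n_t(G)\;=\;\pi^n_t(G)-\pi^n_0(G)-\int_0^t n^2\mc L_n\,\pi^n_s(G)\,ds
\end{equation*}
is a martingale. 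The first step is to compute the integrand explicitly. A summation by parts on the bulk part $\mc L_{n,o}$ produces the discrete Laplacian $n^2[G(\pfrac{x+1}{n})-2G(\pfrac{x}{n})+G(\pfrac{x-1}{n})]\to\p_u^2 G$ on the interior, together with two surviving boundary-gradient terms of the form $n[G(\pfrac{2}{n})-G(\pfrac{1}{n})]\,\eta(1)\to\p_u G(0)\,\rho(s,0)$ and its right-edge analogue. The slowed reservoir part $\mc L_{n,b}$, after multiplication by $n^2$, contributes exactly $G(\pfrac1n)(\alpha-\eta(1))+G(\pfrac{n-1}{n})(\beta-\eta(n-1))$; the $1/n$ slowdown is precisely what makes this reservoir contribution survive at order one under the diffusive time scaling. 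Collecting the two pieces, the drift converges formally to the weak formulation of \eqref{hydroeq}: integrating $\p_u^2\rho$ against $G$ by parts twice and inserting the Robin conditions reproduces exactly the surviving boundary terms $\rho(s,0)[\p_u G(0)-G(0)]+\alpha G(0)$ at the left edge and $-\rho(s,1)[\p_u G(1)+G(1)]+\beta G(1)$ at the right edge, so that no special boundary condition on $G$ is required.

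The second step is to show the martingale is negligible. A direct estimate of its quadratic variation gives a bound of order $1/n$: each bulk bond contributes a squared discrete gradient $(\nabla_n G)^2/n^2$ at rate $n^2$ over $O(n)$ bonds, while each of the two boundary sites contributes $O(1/n^2)$ at rate $O(n)$. Hence $M^n_t(G)\to0$ in $L^2$ and the limiting trajectory is deterministic. Tightness follows next: since $\pi^n_t$ lives in a compact space, it suffices to prove tightness of the real-valued processes $t\mapsto\pi^n_t(G)$ for $G$ in a dense class, which is obtained from the decomposition above together with the uniform control on the drift and the vanishing quadratic variation, via Aldous' criterion.

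Finally, I would characterize the limit points and invoke uniqueness. Any limit point is concentrated on trajectories $\pi_t(du)=\rho(t,u)\,du$, and passing to the limit in the identity $M^n_t(G)=0$ identifies $\rho$ as a weak solution of \eqref{hydroeq}. The uniqueness of such weak solutions, established in \cite{bmns}, upgrades the convergence of subsequences to convergence in probability of the whole sequence; evaluating at a fixed time $t$ against a continuous test function then yields the stated limit.

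The hard part will be the rigorous \emph{boundary replacement}. The drift above contains the single-site occupations $\eta_{sn^2}(1)$ and $\eta_{sn^2}(n-1)$, which must be replaced, in a space-time averaged sense, by the macroscopic boundary densities $\rho(s,0)$ and $\rho(s,1)$ so that the closed weak equation is recovered. Justifying this requires an energy estimate guaranteeing that the limiting density lies in a Sobolev space, so that its boundary traces are well defined, together with a replacement lemma tailored to the edge. This edge replacement is more delicate than the usual interior one precisely because the Dirichlet form near the boundary is weakened by the slow reservoir rates, and it is here that the balance between the $n^2$ time scaling and the $1/n$ boundary slowdown must be handled with care.
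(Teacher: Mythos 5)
This theorem is not proved in the paper at all --- it is imported verbatim from \cite{bmns}, so there is no internal proof to compare your attempt against. Your sketch follows essentially the same route as that reference (the standard Kipnis--Landim entropy/martingale method): the Dynkin decomposition of the empirical measure, the observation that the $1/n$ reservoir slowdown exactly balances the $n^2$ diffusive scaling so the boundary terms survive at order one, the vanishing quadratic variation, tightness, identification of limit points with the Robin weak formulation (your drift computation and the matching of the boundary terms $\rho(s,0)[\p_u G(0)-G(0)]+\alpha G(0)$ and $-\rho(s,1)[\p_u G(1)+G(1)]+\beta G(1)$ are correct, including the fact that no boundary conditions are imposed on $G$), and uniqueness of weak solutions; you also correctly isolate the edge replacement lemma and the energy estimate as the genuinely hard steps, which is precisely the technical content supplied by \cite{bmns}.
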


\subsection{Density fluctuations}
\subsubsection{The space of test functions}
By $f\in C^\infty([0,1])$ we mean that both $f:[0,1]\to \bb R$ as well as  all its  derivatives are continuous functions in $[0,1]$. Next, we define a subspace of $C^\infty([0,1])$ which is intrinsically associated to the limiting fluctuations, as we shall see later on.
\begin{definition}\label{def1} Let $\mathcal S$ denote the set of functions $f\in C^\infty([0,1])$ such that for any $k\in\mathbb{N}\cup \{0\}$ it holds that 
$ \p_u^{2k+1} f(0)= \p_u^{2k}f(0)$ and $\p_u^{2k+1} f(1)=- \p_u^{2k}f(1).
$
\end{definition}
Notice that  for $k=0$, the conditions above are nothing else than the boundary conditions that appear in the homogeneous version of \eqref{hydroeq}, i.e., imposing $\alpha=\beta=0$. For $k=1$, the conditions above are again these boundary conditions, but imposed for the Laplacian of $f$, and so on.

\begin{definition}\label{def2}
Let $T_t:\mc S\to\mc S$ be the semigroup associated to \eqref{hydroeq} with $\alpha=\beta=0$. That is, given $f\in \mc S$, by $T_tf$ we mean the solution of the homogeneous version of \eqref{hydroeq} with initial condition $f$.
\end{definition}

Rigorously speaking, above we should not have written $T_t:\mc S\to\mc S$, since we do not know yet if the  image of $T_t$ is contained in $\mc S$. But this is true and it will be proved  below in Corollary \ref{cor42}.

\begin{definition}\label{def:laplacian_operator}
Let $\Delta: \mc S\rightarrow \mc S$   be the Laplacian operator  which is defined on $f\in\mc S$ as 
\begin{equation}\label{laplacian}
\Delta f(u)\;=\;\left\{\begin{array}{cl}
\partial_u^2 f(u)\,,& \mbox{if}\,\,\,u\in(0,1)\,,\\
\partial_u^2 f(0^+)\,,& \mbox{if} \,\,\,u=0\,,\\
\partial_u^2 f(1^-)\,,& \mbox{if} \,\,\,u=1\,.\\
\end{array}
\right.
\end{equation}
Above, $\partial_u^2 f(a^\pm)$ denotes the side limits at the point $a$. The definition of the operator
$\nabla: \mc S\rightarrow C^{\infty}[0,1]$  is analogous.
\end{definition}
We will also use the notations $\p_u$ and $\p_u^2$   for $\nabla$ and $\Delta$, respectively.

\begin{definition}
Let $\mc S'$ be the topological dual of $\mc S$ with respect to the topology generated by the seminorms 
 \begin{equation}\label{semi-norm}
\|f\|_{k}=\sup_{u\in[0,1]}|\partial_u^kf(u)|\,,
\end{equation}
where $k\in\mathbb{N}\cup \{0\}$.
In other words, $\mc S'$ consists of all linear functionals $f:\mc S\to \bb R$ which are continuous with respect to all the seminorms $\Vert \cdot \Vert_k$.
\end{definition}
In order to avoid topological issues we fix once and for all a finite time horizon $T$. 
Let  $\mc D([0,T],\mc S')$ (resp. $\mc C([0,T], \mc S')$) be the space of trajectories which are right continuous, with left limits (resp. continuous) and taking values in $\mc S'$.

\subsubsection{The density fluctuation field}
Fix an initial measure $\mu_n$ in $\Omega_n$. For $x\in\Sigma_n$  and $t\geq 0$, let 
\begin{equation}\label{rho_t}
\rho^n_t(x)\;=\;\mathbb{E}_{\mu_n}[\eta_{tn^2}(x)]\,.
\end{equation}
 We extend this definition to the boundary by setting 
 \begin{equation}\label{ext_bound}
 \rho^n_t(0)\;=\;\alpha\mbox{ and }\rho^n_t(n)\;=\;\beta\,, \mbox{ for all }t\geq 0\,. 
\end{equation}  
 A simple computation shows that $\rho_t^n(\cdot)$ is a solution of the discrete equation given by
\begin{equation}\label{disc_heat}
\left\{
\begin{array}{ll}
 \partial_t \rho_t^n(x) \;= \; \big(n^2\mc B_n \rho_t^n\big)(x)\,, \;\; x\in\Sigma_n\,,\;\;t \geq 0\,,\\
 \rho_t^n(0)\;=\alpha\,, \;\;t \geq 0\,,\\
   \rho^n_t(n)\;=\;\beta\,, \;\;t \geq 0\,,\\
\end{array}
\right.
\end{equation}
 where the operator $\mc B_n$ acts on functions $f:\Sigma_n\cup \{0,n\}\to\bb R$ as
\begin{equation}\label{op_B}
(\mc B_nf)(x)\;=\;\sum_{y=0}^n\xi_{x,y}^n\big(f(y)-f(x)\big)\,, ~~\textrm{ for } x\in \Sigma_n\,,
\end{equation}
where 
\begin{equation*}
\xi_{x,y}^n\;=\; \begin{cases}
 1\,, & \textrm{ if } \; |y-x|=1 \textrm{ and }x,y\in \Sigma_n\,,\\
 \frac{1}{n}\,, & \textrm{ if }\; x=1,y=0\textrm{ and } x=n-1\,, y=n\,,\\ 
 0\,,& \textrm{ otherwise.}
 \end{cases} 
\end{equation*}

\begin{remark}
If  $\mu_n:=\mu_{ss}$ is the non-equilibrium  stationary measure of the system, then the profile  $\rho^n_t(\cdot)$ defined in \eqref{rho_t} is also  stationary in time. In this context, we denote it by   $\rho^n_{ss}(\cdot)$.
As one can see in \cite[Lemma 3.1]{bmns}, $\rho^n_{ss}(\cdot)$ is given by
\begin{equation}\label{stat_profile_disc}
\rho^n_{ss}(x)\;=\;a_n\,x+b_n\quad \textrm{ for }x\in \Sigma_n\,,
\end{equation}
where
$a_n=\pfrac{\beta-\alpha}{3n-2}$ and $b_n=a_n(n-1)+\alpha$.
If we extend the definition of $\rho_{ss}^n(\cdot)$ to the boundary of $\Sigma_n$, as in  \eqref{ext_bound}
we get that $\rho_{ss}^n(\cdot)$ is the stationary solution of \eqref{disc_heat}. 
\end{remark}

Now we define the non-equilibrium  density fluctuation field as follows.
 
\begin{definition}[Density fluctuation field]
We define the density fluctuation field $\mc Y_\cdot^n$ as the time-trajectory of linear functionals acting on functions $f\in\mc S$ as
\begin{equation}\label{density field}
\mc Y^n_t(f)\;=\;\frac{1}{\sqrt{n}}\sum_{x=1}^{n-1}f(\tfrac{x}{n})\Big(\eta_{tn^2}(x)-\rho^n_t(x)\Big)\,.
\end{equation}
\end{definition}

\subsubsection{Non-equilibrium fluctuations}
In the next result we assume the following conditions on the initial state $\mu_n$.

  \begin{assumption}\label{assumption0}
For each $n\in\bb N$, the measure  $\mu_n$ is associated to a measurable profile $\rho_0:[0,1]\to[0,1]$ in the sense of \eqref{eq3}.
\end{assumption}

  \begin{assumption}\label{assumption1}
There exists a constant $C_1>0$ not depending on $n$ such that 
\begin{equation*}
 \max_{ x\in\Sigma_n}\big|\,\rho^n_0(x)-\rho_{0}(\tfrac xn )\big|\;\leq\;\frac{C_1}{n}\,.
\end{equation*}
\end{assumption}
\begin{assumption}\label{assumption2}
There exists a constant $C_2>0$ not depending on $n$ such that for 
\begin{equation}
\label{cor_time_0}\varphi_0^n(x,y)=\mathbb{E}_{\mu_n}[\eta(x)\eta(y)]-\rho_0^n(x)\rho_0^n(y)
\end{equation}
 it holds that
\begin{equation*}
 \max_{1\leq x<y\leq n-1}\big|\varphi_0^n(x,y)\big|\;\leq\;\frac{C_2}{n}\,.
\end{equation*}
\end{assumption}
 For each $n\geq 1$, let  $Q_n$ be the probability measure on $\mc D([0,T],\mc S')$  induced by the density fluctuation field $\mc Y^n_\cdot$ and the measure $\mu_n$.  
\begin{theorem}[Non-equilibrium fluctuations]\label{non_eq_flu}
\quad

The sequence of measures $\{Q_n\}_{ n\in \mathbb{N}}$ is tight on  $\mc D([0,T],\mc S')$ and 
all limit points $Q$ are probability measures concentrated on paths $\mathcal{Y}_\cdot$ satisfying 
 \begin{equation}\label{characterization}
\mathcal{Y}_t(f)\;=\;\mathcal{Y}_0(T_t f)+W_t(f)\,,
\end{equation}
for any $f\in\mathcal S$.
Above $T_t$ is the semigroup  given  in  Definition \ref{def2} and  $W_t(f)$ is a mean zero Gaussian variable of variance  
\begin{equation}\label{eq212}
 \int_0^t\|\nabla T_{t-r}  f\|^2_{L^2(\rho_r)}dr\,,
\end{equation}
where for $r>0$ 
\begin{equation}\label{norm}
\begin{split}
\<f, g\>_{L^2(\rho_r)}=&\,\big[\alpha+(1-2\alpha)\rho(r,0)\big]\,f(0)g(0)+ \big[\beta+(1-2\beta)\rho(r,1)\big]\,f(1)g(1)\\
+\;& \int_0^12\chi(\rho(r,u))\,f(u)g(u)\,du\,, \\
\end{split}
\end{equation}
  $\rho(t,u)$ is the solution of the hydrodynamic equation  \eqref{hydroeq}, and $\chi(u)=u(1-u)$. 
Moreover,  $\mathcal{Y}_0$ and $W_t$ are uncorrelated in the sense that $\bb E_Q \Big[\mc Y_0(f) \, W_t(g) \Big]\;=\;0$
for all $f,g\in \mc S$.
\end{theorem}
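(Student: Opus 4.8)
The plan is to obtain the two assertions — tightness and the identification of limit points — separately, and to handle the characterization by the martingale method applied to a time-dependent test function. Tightness of $\{Q_n\}$ on $\mc D([0,T],\mc S')$ is proved in Section~\ref{s6}, so it suffices to fix a limit point $Q$, a function $f\in\mc S$ and a time $t\in[0,T]$, and to establish \eqref{characterization} together with the stated law of $W_t(f)$ and its lack of correlation with $\mc Y_0$. To produce a drift-free decomposition I would run Dynkin's formula on the time-dependent observable $s\mapsto\mc Y^n_s(\psi_s)$, where $\psi_s:=T_{t-s}f$. By Corollary~\ref{cor42} we have $\psi_s\in\mc S$ for every $s$, and since $T_r$ solves the homogeneous version of \eqref{hydroeq} we have $\p_s\psi_s=-\Delta\psi_s$. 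Writing $g_s:=\eta_{sn^2}-\rho^n_s$, which vanishes at the extended sites $0$ and $n$ by \eqref{ext_bound}, and using the discrete heat equation \eqref{disc_heat} to cancel the time derivative of $\rho^n_s$, Dynkin's formula yields that
\[
M^n_t(f)=\mc Y^n_t(f)-\mc Y^n_0(T_tf)-\int_0^t\Theta^n_s\,ds
\]
is a martingale, with $\Theta^n_s=\mc Y^n_s(\p_s\psi_s)+\tfrac1{\sqrt n}\sum_{x=1}^{n-1}\psi_s(\tfrac xn)\,n^2(\mc B_n g_s)(x)$.

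The core of the characterization is to show $\int_0^t\Theta^n_s\,ds\to0$. I would perform a discrete summation by parts moving $\mc B_n$ onto $\psi_s$: because $g_s$ is null at $0$ and $n$ and the rates $\xi^n_{x,y}$ in \eqref{op_B} are symmetric, this produces the bulk term $\mc Y^n_s(\Delta_n\psi_s)$, with $\Delta_n$ the discrete Laplacian, plus two boundary contributions of order $\sqrt n$ localized at $x=1$ and $x=n-1$. A Taylor expansion shows the leftmost contribution equals $\sqrt n\,(\p_u\psi_s(0)-\psi_s(0))\,g_s(1)$ up to $o(1)$, which vanishes exactly because $\psi_s\in\mc S$ forces $\p_u\psi_s(0)=\psi_s(0)$; the right boundary is handled identically via $\p_u\psi_s(1)=-\psi_s(1)$. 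Combining the surviving bulk term with $\mc Y^n_s(\p_s\psi_s)=-\mc Y^n_s(\Delta\psi_s)$ and using $\Delta_n\psi_s\to\Delta\psi_s$ uniformly, the integrand collapses to $\mc Y^n_s(\Delta_n\psi_s-\Delta\psi_s)$, whose second moment is $o(1)$ by the diagonal variance bound together with the sharp two-point estimate of Section~\ref{s7}. Passing to the limit identifies $W_t(f)$ with $\lim_n M^n_t(f)$ and gives \eqref{characterization}.

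It remains to identify the law of $W_t(f)$. I would compute $\langle M^n(f)\rangle_t=\int_0^t\Gamma^n_s\,ds$ from the carré du champ of $n^2\mc L_n$. Up to lower order, the bulk bonds contribute $\tfrac1n\sum_x(\nabla\psi_s(\tfrac xn))^2[\eta_{sn^2}(x+1)-\eta_{sn^2}(x)]^2$, while each boundary flip contributes $[\alpha+(1-2\alpha)\eta_{sn^2}(1)]\,\psi_s(\tfrac1n)^2$ and its right analogue. Replacing $[\eta(x+1)-\eta(x)]^2$ by $2\chi(\rho(s,\tfrac xn))$ and $\eta_{sn^2}(1)$ by $\rho(s,0)$ — justified by a Replacement/Boltzmann--Gibbs step fed by the hydrodynamic limit and the $C/n$ correlation bound — turns the bulk part into $\int_0^12\chi(\rho(s,u))(\nabla\psi_s(u))^2\,du$ and the boundary part into $[\alpha+(1-2\alpha)\rho(s,0)]\psi_s(0)^2$. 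Using once more that $\psi_s\in\mc S$, so that $\psi_s(0)^2=(\nabla\psi_s(0))^2$ and $\psi_s(1)^2=(\nabla\psi_s(1))^2$, these boundary terms are precisely those of \eqref{norm}, and hence $\langle M^n(f)\rangle_t\to\int_0^t\|\nabla T_{t-r}f\|^2_{L^2(\rho_r)}\,dr$, matching \eqref{eq212}. Since the jumps of $M^n_t(f)$ are of order $n^{-1/2}$, the martingale central limit theorem shows that $W_t(f)$ is a mean-zero Gaussian of this variance. Finally, $\mathbb{E}_Q[\mc Y_0(f)\,W_t(g)]=0$ follows because $W_\cdot(g)$ is a limit of martingales null at time $0$, so $\mathbb{E}[W_t(g)\mid\mc F_0]=0$.

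The hard part is the control of the space correlations. Both the vanishing of the drift error $\mc Y^n_s(\Delta_n\psi_s-\Delta\psi_s)$ and the replacement steps in the quadratic variation hinge on a uniform-in-time bound $|\varphi^n_s(x,y)|\le C/n$ for the two-point function. Establishing this sharp estimate — propagating Assumption~\ref{assumption2} in time through the analysis of the bidimensional discrete scheme with its non-trivial boundary conditions carried out in Section~\ref{s7} — is the delicate analytic heart of the argument; the remaining manipulations are the standard, if lengthy, summation-by-parts and replacement computations outlined above.
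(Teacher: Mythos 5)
Your proposal is correct and follows essentially the same route as the paper: the martingale decomposition along the time-dependent test function $\psi_s=T_{t-s}f$ (the paper's choice \eqref{test}), the vanishing of the drift via $\p_s\psi_s=-\Delta\psi_s$, the boundary conditions defining $\mc S$, and the discrete-to-continuous Laplacian error, the identification of $W_t(f)$ through convergence of the quadratic variation plus a martingale CLT (the paper invokes Proposition 4.2 of \cite{GJ2016} and the replacement results of \cite{bmns}), and the uncorrelatedness from $\bb E[W_t(g)\,|\,\mc F_0]=0$, all resting on tightness from Section \ref{s6} and the $C/n$ two-point correlation bound of Section \ref{s7}.
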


\begin{theorem}[Ornstein-Uhlenbeck limit]\label{thm27}
\quad

Assume that the sequence of  initial density fields $\{\mc{Y}_0^n\}_{n\in\bb N}$ converges, as $n\to\infty$, to a mean-zero Gaussian field  $\mc Y$ with covariance given on $f,g\in\mc S$ by
\begin{equation}\label{covar}
\lim_{n\to\infty}\mathbb E_{\mu_n}\Big[\mc Y^n_0 (f)\mc Y^n_0(g)\Big]\;=\;\mathbb E\,\Big[\mc Y (f)\mc Y(g)\Big]\;:=\;\sigma(f,g)\,.
\end{equation}
Then, the sequence $\{Q_n\}_{ n\in \mathbb{N}}$ converges, as $n\to\infty$, to a generalized Ornstein-Uhlenbeck (O.U.) process, which is the formal solution of  the equation:
\begin{equation} \label{O.U.}
\partial_t \mathcal{Y}_t\;=\;\Delta\mathcal{Y}_tdt+\sqrt{2\chi(\rho_t)}\nabla W_t\,,
\end{equation}
where $ W_t$ is a Brownian Motion of unit variance and $\Delta$,  $\nabla$ are given in Definition \ref{def:laplacian_operator}. As a consequence, the covariance of the limit field $\mathcal{Y}_t$ is given on $f,g\in{\mc S}$ by
\begin{equation}\label{covariance non eq limit field}
 E\,[\mathcal{Y}_t(f)\mathcal{Y}_s(g)]\;=\;\sigma(T_tf,T_sg)+\int_0^s\<\nabla T_{t-r} f, \nabla T_{s-r}g\>_{L^2(\rho_r)}dr\,.
\end{equation}
\end{theorem}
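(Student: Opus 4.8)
The plan is to prove convergence to the generalized Ornstein--Uhlenbeck process by the martingale-problem method: first secure tightness, then characterize every limit point as a solution of a well-posed martingale problem whose initial datum is the prescribed Gaussian field, and finally appeal to uniqueness of that martingale problem to promote tightness to genuine convergence; the covariance \eqref{covariance non eq limit field} is then read off from the mild formulation. Tightness of $\{Q_n\}$ on $\mc D([0,T],\mc S')$ I would import from the general estimates of Section \ref{s6}, so the real work is the identification of the limit.

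Fix $f\in\mc S$. Dynkin's formula produces a martingale
\[
M^n_t(f)\;=\;\mc Y^n_t(f)-\mc Y^n_0(f)-\int_0^t\Theta^n_s(f)\,ds ,
\]
whose drift $\Theta^n_s(f)$ combines the action of $n^2\mc L_n$ on the field with the time derivative of the centering $\rho^n_s(\cdot)$. Using that $\mc L_n$ is linear on the occupation variables together with \eqref{disc_heat} to cancel the reservoir injection, and then a discrete summation by parts (the rates $\xi^n$ are symmetric), $\Theta^n_s(f)$ becomes $\mc Y^n_s$ applied to $n^2\mc B_n f$ plus two terms supported at the sites $1$ and $n-1$. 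In the interior a Taylor expansion gives $n^2\mc B_n f\to\Delta f$; the a priori divergent $O(n)$ contributions of $n^2\mc B_n f$ at the two boundary sites combine with those extra terms and cancel \emph{exactly}, precisely because the $k=0$ conditions $\p_u f(0)=f(0)$ and $\p_u f(1)=-f(1)$ of $\mc S$ hold, leaving residual boundary terms of order $O(1/\sqrt n)$ that vanish. Hence the drift converges to $\int_0^t\mc Y_s(\Delta f)\,ds$. A carr\'e du champ computation gives the quadratic variation $\langle M^n(f)\rangle_t$: its bulk part yields $\int_0^t\!\int_0^1 2\chi(\rho(s,u))(\p_u f(u))^2\,du\,ds$, while the slowed Glauber bonds (sped up by $n^2$ but of rate $O(1/n)$) produce the two boundary contributions of \eqref{norm}; using once more $\p_u f(0)=f(0)$ and $\p_u f(1)=-f(1)$ to match boundary values, one obtains
\[
\langle M^n(f)\rangle_t\;\longrightarrow\;\int_0^t\|\nabla f\|^2_{L^2(\rho_s)}\,ds .
\]

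Both limits rest on replacing the local functions $\eta(x)$ and $(\eta(x+1)-\eta(x))^2$ by their averages against the profile $\rho(s,\cdot)$, i.e.\ on a non-equilibrium Boltzmann--Gibbs principle. Since the initial measure is general and correlations build up in time, the decisive ingredient is a uniform bound $|\varphi^n_s(x,y)|\le C/n$ on the two-point correlation, valid for all $s$: it controls the correction $-2\varphi^n_s(x,x+1)$ in $\bb E[\langle M^n(f)\rangle_t]$ and the vanishing of its variance. I expect this correlation estimate to be the main obstacle; it is the content of Section \ref{s7}, obtained by analyzing the closed evolution equation for $\varphi^n_s$ on the triangle $\{1\le x<y\le n-1\}$ with its nonstandard boundary conditions, seeded by Assumption \ref{assumption2}.

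With drift and quadratic variation in hand, any limit point $Q$ satisfies: for every $f\in\mc S$, the process $\mc Y_t(f)-\mc Y_0(f)-\int_0^t\mc Y_s(\Delta f)\,ds$ is a continuous martingale with the deterministic quadratic variation above, and, by hypothesis \eqref{covar}, $\mc Y_0$ is the mean-zero Gaussian field of covariance $\sigma$. This is precisely the martingale problem associated to \eqref{O.U.}, which is well posed by the Holley--Stroock theory; uniqueness of its (Gaussian) solution forces all limit points to coincide, so $\{Q_n\}$ converges to the O.U.\ process. Finally, the mild formulation $\mc Y_t(f)=\mc Y_0(T_tf)+\int_0^t dM_r(T_{t-r}f)$, with $T_t$ the semigroup of Definition \ref{def2} (well defined on $\mc S$ by Corollary \ref{cor42}), yields \eqref{covariance non eq limit field}: the independence of $\mc Y_0$ and the noise contributes $\sigma(T_tf,T_sg)$, while the It\^o isometry, with cross-variation $\<\nabla\cdot,\nabla\cdot\>_{L^2(\rho_r)}$ and noises correlated only up to $\min(s,t)=s$, contributes $\int_0^s\<\nabla T_{t-r}f,\nabla T_{s-r}g\>_{L^2(\rho_r)}\,dr$.
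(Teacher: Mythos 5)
Your proposal is correct and takes essentially the same approach as the paper: tightness from Section \ref{s6}, identification of every limit point as a solution of the martingale problem (drift $\int_0^t\mc Y_s(\Delta f)\,ds$ thanks to the boundary conditions defining $\mc S$, quadratic variation $\int_0^t\|\nabla f\|^2_{L^2(\rho_r)}\,dr$), uniqueness of that martingale problem, and the covariance read off from the mild formulation with $\mc Y_0$ uncorrelated with the noise. The only difference is one of detail rather than of route: where you invoke Holley--Stroock theory and the Section \ref{s7} correlation bounds as black boxes, the paper proves uniqueness explicitly (Proposition \ref{pp1}, via exponential martingales and the stability $T_t:\mc S\to\mc S$ of Corollary \ref{cor42}) and obtains the convergence of the quadratic variation by appealing to \cite{GJ2016} and \cite{bmns}.
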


In Subsection \ref{sub42} we present the precise definition of such generalized O.U. process.
As a consequence of the previous result we obtain the 
non-equilibrium fluctuations starting from a Local Gibbs state.

\begin{corollary}[Local Gibbs state]\label{cor}

Fix a Lipschitz  profile  $\rho_0:[0,1]\to[0,1]$
and suppose to start the process from a Bernoulli product measure given by $\mu_n\{\eta:\eta(x)=1\}=\rho_0(\tfrac{x}{n})$.
Then, the Theorem \ref{thm27} remains in force and the covariance in this case is given on $f,g\in\mc S $ by 
\begin{equation}\label{covariance_local_gibbs}
 E\,[\mathcal{Y}_t(f)\mathcal{Y}_s(g)]\;=\;\int_0^1 \chi(\rho_0(u))\,T_tf(u)T_sg(u)\,du+\int_0^s\<\nabla T_{t-r} f, \nabla T_{s-r}g\>_{L^2(\rho_r)}dr\,,
\end{equation}
where $\rho(t,u)$ is the solution of the hydrodynamic equation \eqref{hydroeq} with initial condition given by $\rho_0(\cdot)$.
\end{corollary}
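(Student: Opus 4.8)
The plan is to show that the Bernoulli product measure $\mu_n$ with marginals $\rho_0(x/n)$ falls within the scope of Theorem \ref{thm27}: I will verify Assumptions \ref{assumption0}, \ref{assumption1} and \ref{assumption2}, then establish the initial Gaussian central limit theorem \eqref{covar} with the explicit covariance $\sigma(f,g)=\int_0^1\chi(\rho_0(u))f(u)g(u)\,du$, and finally read off \eqref{covariance_local_gibbs} by specializing \eqref{covariance non eq limit field}.

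First I would check the three assumptions, which are almost immediate thanks to the product structure. Since $\mu_n$ is the Bernoulli product measure with $\mu_n\{\eta(x)=1\}=\rho_0(x/n)$, we have $\rho_0^n(x)=\mathbb E_{\mu_n}[\eta(x)]=\rho_0(x/n)$ \emph{exactly}, so Assumption \ref{assumption1} holds trivially (with left-hand side identically zero). By independence of the $\{\eta(x)\}$, for $x\neq y$ one has $\mathbb E_{\mu_n}[\eta(x)\eta(y)]=\rho_0^n(x)\rho_0^n(y)$, hence $\varphi_0^n(x,y)=0$ in \eqref{cor_time_0} and Assumption \ref{assumption2} holds trivially as well. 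Assumption \ref{assumption0}, namely association to $\rho_0$ in the sense of \eqref{eq3}, follows from the weak law of large numbers: the variable $\frac1n\sum_x f(x/n)\eta(x)$ has mean $\frac1n\sum_x f(x/n)\rho_0(x/n)\to\int f\rho_0$ and variance $\frac1{n^2}\sum_x f(x/n)^2\chi(\rho_0(x/n))=O(1/n)$, so Chebyshev's inequality gives the required concentration.

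The substantial, though standard, step is the initial central limit theorem. By the Cram\'er--Wold device it suffices to show that for every $f\in\mathcal S$ the scalar variable $\mathcal Y^n_0(f)=\frac1{\sqrt n}\sum_{x}f(x/n)(\eta(x)-\rho_0(x/n))$ converges to a centered Gaussian. This is a sum of independent, mean-zero random variables, each bounded by $C/\sqrt n$, so Lindeberg's condition is satisfied and the limit is Gaussian with variance $\lim_n \frac1n\sum_x f(x/n)^2\chi(\rho_0(x/n))$. Because $\rho_0$ is Lipschitz, hence continuous, this Riemann sum converges to $\int_0^1 f(u)^2\chi(\rho_0(u))\,du$; applying the same computation to $f+g$ and polarizing yields the bilinear limit $\sigma(f,g)=\int_0^1\chi(\rho_0(u))f(u)g(u)\,du$, which identifies the covariance required in \eqref{covar}. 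It is precisely here that the Lipschitz hypothesis on $\rho_0$ enters, to guarantee convergence of these Riemann sums.

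With all hypotheses verified, Theorem \ref{thm27} applies and furnishes \eqref{covariance non eq limit field}. Substituting $\sigma(T_tf,T_sg)=\int_0^1\chi(\rho_0(u))\,T_tf(u)\,T_sg(u)\,du$ into that identity reproduces exactly the first term of \eqref{covariance_local_gibbs}, the second term being unchanged, which completes the argument. I expect no genuine obstacle here: all the analytic difficulty is already absorbed into Theorem \ref{thm27}, and the only points requiring care are the Lindeberg estimate and the Riemann-sum convergence, both of which are routine given the uniform boundedness of the occupation variables and the continuity of $\rho_0$.
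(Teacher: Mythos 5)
Your proposal is correct and follows essentially the same route as the paper: the paper also reduces Corollary \ref{cor} to Theorem \ref{thm27} by verifying convergence of $\mathbb{E}_{\mu_n}[\mathcal{Y}_0^n(f)\mathcal{Y}_0^n(g)]$ to $\int_0^1\chi(\rho_0(u))f(u)g(u)\,du$, merely invoking characteristic functions (as in Kipnis--Landim) where you use the equivalent Lindeberg/Cram\'er--Wold argument, and leaving to the reader the verification of Assumptions \ref{assumption0}--\ref{assumption2}, which you spell out correctly (they hold trivially by the product structure).
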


From Theorem \ref{thm27} to prove the last result,  it is enough to show the convergence at the initial time, that is:
\begin{equation*}
\lim_{n\to\infty}\mathbb E_{\mu_n}\Big[\mc Y^n_0 (f)\mc Y^n_0(g)\Big]\;=\;\int_0^1 \chi(\rho_0(u))\,f(u)g(u)\,du\,,
\end{equation*} 
which can be easily verified by means of the convergence of characteristic functions, in the same way of \cite[page 297, Cor. 2.2]{kl}. We leave the details to the reader.

\subsubsection{Stationary fluctuations}

Fix $\alpha\neq \beta$. Consider the process starting from the stationary measure $\mu_{ss}$. Note that 
the density fluctuation field defined on \eqref{density field} is simply given on  $f\in\mc S$ by
\begin{equation}\label{density field_stat}
\mc Y^n_t(f)\;=\;\frac{1}{\sqrt{n}}\sum_{x=1}^{n-1}f(\tfrac{x}{n})\Big(\eta_{tn^2}(x)-\rho_{ss}^n(x)\Big)\,,
\end{equation}
where $\rho_{ss}^n(x)$ is defined in \eqref{rho_t} with $\mu_n=\mu_{ss}$ and given explicitly in \eqref{stat_profile_disc}.

\begin{theorem}[Stationary fluctuations]\label{flustat}
\quad

Suppose to start the process from $\mu_{ss}$ with $\alpha\neq{\beta}$. Then, $\mathcal{Y}^n$ converges to the centered Gaussian field $\mathcal{Y}$  with covariance given on $f,g\in\mc S$ by:
\begin{equation}\label{stat convariance}
\begin{split}
& E_{\mu_{ss}}[\mathcal{Y}(f)\mathcal{Y}(g)]\;=\;\int_0^1 \chi(\overline{\rho}(u))f(u)g(u)\,du-\Big(\frac{\beta-\alpha}{3}\Big)^2\int_0^1 [(-\Delta)^{-1}f(u)]g(u) \;du\\
&+\frac{2(2\beta+\alpha)(2\beta-1)}{3}\int_0^\infty\hspace{-10pt} T_t f(1)T_t g(1)\,dt+\frac{2(\beta+2\alpha)(2\alpha-1)}{3}\int_0^\infty\hspace{-10pt} T_t f(0)T_t g(0)\,dt\,,\\
\end{split}
\end{equation}
with $\overline{\rho}(u)=\big(\pfrac{\beta-\alpha}{3}\big)\,u +\pfrac{\beta+2\alpha}{3}$, which is the stationary solution of \eqref{hydroeq}.
\end{theorem}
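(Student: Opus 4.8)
The plan is to deduce Theorem~\ref{flustat} from the Ornstein--Uhlenbeck limit of Theorem~\ref{thm27}, exactly as announced in the Introduction: first check that the stationary measure $\mu_{ss}$ meets the hypotheses of that theorem, and then identify the resulting covariance by passing to the time limit in \eqref{covariance non eq limit field}.

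First I would verify the three assumptions for $\mu_n:=\mu_{ss}$. Assumption~\ref{assumption0} is immediate, since by \cite[Theorem 2.2]{bmns} the measure $\mu_{ss}$ is associated to the stationary profile $\bar\rho$. Assumption~\ref{assumption1} follows by comparing the explicit discrete stationary profile \eqref{stat_profile_disc}, $\rho^n_{ss}(x)=a_nx+b_n$ with $a_n=\tfrac{\beta-\alpha}{3n-2}$, against $\bar\rho(\tfrac xn)=\tfrac{\beta-\alpha}{3}\tfrac xn+\tfrac{\beta+2\alpha}{3}$; a direct estimate gives $|\rho^n_{ss}(x)-\bar\rho(\tfrac xn)|\le C/n$. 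The substantial point is Assumption~\ref{assumption2}, the uniform bound $|\varphi^n_{ss}(x,y)|\le C/n$ on the stationary two-point correlation; this is precisely the content of the estimates developed in Section~\ref{s7}, where $\varphi^n_{ss}$ is analyzed as the solution of the stationary bidimensional discrete scheme with its non-trivial boundary conditions. Finally, to place myself under Theorem~\ref{thm27} I must check the initial Gaussian CLT \eqref{covar}, which is obtained from the same correlation bound together with the explicit profile, by computing the limit of $\mb E_{\mu_{ss}}[\mc Y^n_0(f)\mc Y^n_0(g)]$ through the diagonal variance $\tfrac1n\sum_xf(\tfrac xn)g(\tfrac xn)\chi(\rho^n_{ss}(x))$ and the off-diagonal sum against $\varphi^n_{ss}$.

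Granting this, Theorem~\ref{thm27} applies and $\mc Y^n$ converges to the stationary Gaussian field $\mc Y$, whose covariance I read off from \eqref{covariance non eq limit field} with $s=t$ and $\rho_r\equiv\bar\rho$, using the time-invariance of $\mu_{ss}$. The homogeneous semigroup $T_t$ is dissipative: the energy identity $\tfrac{d}{dt}\|T_tf\|^2_{L^2}=-2\big(T_tf(0)^2+T_tf(1)^2\big)-2\|\nabla T_tf\|^2_{L^2}\le 0$, together with the fact that the homogeneous operator has no zero eigenvalue, shows that $T_tf\to0$ in every seminorm. Hence, letting $t\to\infty$ in the self-consistency relation $\sigma(f,g)=\sigma(T_tf,T_tg)+\int_0^t\langle\nabla T_vf,\nabla T_vg\rangle_{L^2(\bar\rho)}\,dv$ yields
\[
\mb E_{\mu_{ss}}[\mc Y(f)\mc Y(g)]=\int_0^\infty\langle\nabla T_vf,\nabla T_vg\rangle_{L^2(\bar\rho)}\,dv .
\]

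The remaining, and most delicate, task is to reduce this time integral to the closed form \eqref{stat convariance}. Here I would split $\langle\cdot,\cdot\rangle_{L^2(\bar\rho)}$ according to \eqref{norm} into its two boundary pieces and its bulk piece. Using the boundary conditions defining $\mc S$, namely $\nabla T_vf(0)=T_vf(0)$ and $\nabla T_vf(1)=-T_vf(1)$, the boundary pieces integrate directly into multiples of $\int_0^\infty T_vf(1)T_vg(1)\,dv$ and $\int_0^\infty T_vf(0)T_vg(0)\,dv$. For the bulk piece I would use the pointwise identity $2\,\partial_uh\,\partial_uk=\partial_u^2(hk)-\partial_v(hk)$, valid for $h=T_vf$, $k=T_vg$ because $\partial_u^2T_v=\partial_vT_v$; integrating in $v$, the relation $\int_0^\infty\partial_v(hk)\,dv=-fg$ produces the local term $\int_0^1\chi(\bar\rho)fg\,du$, while integrating $\int_0^1\chi(\bar\rho)\,\partial_u^2 Q\,du$ (with $Q=\int_0^\infty T_vf\,T_vg\,dv$) by parts twice, using $\partial_u^2\chi(\bar\rho)=-2(\tfrac{\beta-\alpha}{3})^2$ and the semigroup identity $\int_0^\infty\langle T_vf,T_vg\rangle\,dv=\tfrac12\langle(-\Delta)^{-1}f,g\rangle$ (itself a consequence of $T_vT_v=T_{2v}$ and $\int_0^\infty T_w\,dw=(-\Delta)^{-1}$), generates the non-local term $-(\tfrac{\beta-\alpha}{3})^2\int_0^1[(-\Delta)^{-1}f]g\,du$. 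The main obstacle is the bookkeeping of the boundary contributions: the boundary terms of this double integration by parts must be collected together with the two boundary pieces above and their coefficients simplified with the explicit values $\bar\rho(0)=\tfrac{\beta+2\alpha}{3}$, $\bar\rho(1)=\tfrac{2\beta+\alpha}{3}$ and the relations $\partial_uQ(0)=2Q(0)$, $\partial_uQ(1)=-2Q(1)$. Checking that this algebra delivers exactly the boundary coefficients $\tfrac{2(\beta+2\alpha)(2\alpha-1)}{3}$ and $\tfrac{2(2\beta+\alpha)(2\beta-1)}{3}$ of \eqref{stat convariance} is the point where I expect all the care to be required, and it is where I would concentrate the computation, cross-checking against the direct scaling limit of $\varphi^n_{ss}$ furnished by Section~\ref{s7}.
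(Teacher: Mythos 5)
Your second half (letting $t\to\infty$ and reducing $\int_0^\infty\langle\nabla T_vf,\nabla T_vg\rangle_{L^2(\bar\rho)}\,dv$ by integrations by parts in space and time, using $\langle T_vf,T_vg\rangle=\langle f,T_{2v}g\rangle$ and $\int_0^\infty T_w\,dw=(-\Delta)^{-1}$) is essentially the computation the paper performs in Section \ref{s5} with Corollary \ref{corinv}. But there is a genuine gap at the pivot of your argument: you route the convergence through Theorem \ref{thm27}, whose hypothesis is that $\mc Y_0^n$ converges to a mean-zero \emph{Gaussian} field with covariance \eqref{covar}. For $\mu_{ss}$ this hypothesis is not available: Gaussianity of the stationary field is precisely (part of) the conclusion of Theorem \ref{flustat}, so your argument is circular unless you prove this initial CLT independently. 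Your proposed verification --- ``computing the limit of $\bb E_{\mu_{ss}}[\mc Y_0^n(f)\mc Y_0^n(g)]$'' --- does not close the gap, for two reasons. First, convergence of covariances does not imply convergence in distribution to a Gaussian field. Second, that limit cannot even be computed from the ingredients you invoke: the off-diagonal sum $\tfrac2n\sum_{x<y}f(\tfrac xn)g(\tfrac yn)\varphi^n_{ss}(x,y)$ has order $n^2$ terms each of order $n^{-1}$, so the bound $|\varphi^n_{ss}|\le C/n$ (Assumption \ref{assumption2}, which the paper takes from \cite[Lemma 3.2]{bmns}) only shows it is $O(1)$; identifying its limit would require the sharp asymptotics of the stationary two-point function, i.e.\ essentially re-deriving the answer by other means. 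Note also that Proposition \ref{prop3.1} in Section \ref{s7} \emph{propagates} a bound assumed at time zero; it does not by itself produce the stationary bound you attribute to it.

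The paper's proof avoids this circularity, and that is its key structural idea: it invokes Theorem \ref{non_eq_flu}, which needs only Assumptions \ref{assumption0}--\ref{assumption2}, so that every limit point satisfies $\mc Y_t(f)=\mc Y_0(T_tf)+W_t(f)$ with $W_t(f)$ Gaussian of variance \eqref{eq212} and uncorrelated with $\mc Y_0$. Stationarity makes the law of $\mc Y_t(f)$ independent of $t$, while Corollary \ref{cor421} makes the term $\mc Y_0(T_tf)$ --- whose law is unknown --- vanish as $t\to\infty$; hence the stationary field \emph{inherits} Gaussianity from $W_t(f)$ in the limit, with variance $\lim_{t\to\infty}$ of \eqref{eq212}, and only then does one carry out the integration-by-parts computation you describe. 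If you replace your appeal to Theorem \ref{thm27} by this use of Theorem \ref{non_eq_flu} together with stationarity and semigroup decay, the rest of your outline goes through and coincides with the paper's calculation.
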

The time integrals above are well defined in view of the fast decaying of the semigroup $T_t$, see  Corollary \ref{cor421} below.

We  interpret the covariance formula  above in the following way: the first term at the right hand side of \eqref{stat convariance} corresponds to the covariance associated to  $\overline{\rho}$ in the bulk; the second term corresponds to the covariance associated to  $\p_u \overline{\rho}=(\beta-\alpha)/3$ also in the bulk. The third and fourth terms are associated to  $\overline{\rho}$ at the boundaries. Note that for the particular value $\alpha=1/2$ (or $\beta=1/2$) the corresponding boundary term vanishes. 
\section{Semigroup results}\label{sec:sem}
In this section we present some useful  results about the hydrodynamic equation. We start with the homogeneous version of  \eqref{hydroeq}, i.e., considering $\alpha=\beta=0$ as displayed below:  \begin{numcases}{}\label{hom_hydroeq}
\p_t \rho(t,u)= \p_{u}^2 \rho(t,u)\,, & \textrm{ for } $t>0$\,,\, $u\in (0,1)$\,,\\
 \p_u \rho(t,0) = \rho(t,0)\,, & \text{for } $t>0$\,, \label{b1}\\
 \p_u \rho(t,1) = -\rho(t,1)\,, & \textrm{for } $t>0$\,, \label{b2} \\
 \rho(0,u)= \rho_0(u)\,,& $u\in [0,1]$\,. \label{b3}
\end{numcases}

\begin{proposition}\label{prop23}
Suppose that $\rho_0\in L^2[0,1]$. Then the previous equation has a solution given by
\begin{equation}\label{eq6semi}
(T_t \rho_0)(u) \;:=\; \sum_{n=1}^\infty a_n\,e^{-\lambda_n t}\,\Psi_n(u)\,,
\end{equation}
where $\{\Psi_n\}_{n\in \bb N}$ is an orthonormal basis of  $L^2[0,1]$ constituted by  eigenfunctions  of the associated  Regular Sturm-Liouville problem (see \eqref{eq34}-\eqref{eq36} below), and $a_n$ are the Fourier coefficients of $\rho_0$ in that basis. In particular, we prove that  $\lambda_n\sim n^2\pi^2$, and as a consequence,  the series \eqref{eq6semi} converges exponentially fast, implying  that $(T_t\rho_0)(u)$ is smooth in space and time for $t>0$. 
\end{proposition}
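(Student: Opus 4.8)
The plan is to solve the homogeneous equation \eqref{hom_hydroeq} by separation of variables and then to check that the resulting series \eqref{eq6semi} is a genuine classical solution. First I would look for product solutions $\rho(t,u)=e^{-\lambda t}\Psi(u)$; substituting this ansatz into \eqref{hom_hydroeq} reduces the problem to the eigenvalue problem $-\Psi''=\lambda\Psi$ on $(0,1)$ together with the separated Robin conditions $\Psi'(0)=\Psi(0)$ and $\Psi'(1)=-\Psi(1)$ coming from \eqref{b1}--\eqref{b2}. This is a regular Sturm--Liouville problem on the finite interval $[0,1]$, with weight $1$, zero potential, and separated boundary conditions, so the classical Sturm--Liouville theory applies: there is a strictly increasing sequence of simple real eigenvalues $\lambda_1<\lambda_2<\cdots\to+\infty$ whose $L^2$-normalized eigenfunctions $\{\Psi_n\}_{n\in\bb N}$ form an orthonormal basis of $L^2[0,1]$. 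This already yields the orthonormal basis claim and lets me set $a_n=\<\rho_0,\Psi_n\>$, so that $\sum_n a_n\Psi_n=\rho_0$ in $L^2$.

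Next I would locate the eigenvalues and extract their asymptotics. A one-line integration by parts, using the Robin conditions to evaluate the boundary terms, gives for any eigenpair
\[
\lambda\int_0^1\Psi^2\,du=\int_0^1(\Psi')^2\,du+\Psi(0)^2+\Psi(1)^2\,,
\]
whence every $\lambda$ is strictly positive. Writing $\lambda=\mu^2$ with $\mu>0$ and $\Psi(u)=A\cos(\mu u)+B\sin(\mu u)$, the condition at $u=0$ forces $A=\mu B$ and the condition at $u=1$ then yields the transcendental relation $(1-\mu^2)\sin\mu+2\mu\cos\mu=0$, i.e. $\tan\mu=2\mu/(\mu^2-1)$. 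Since the right-hand side tends to $0$ as $\mu\to\infty$, its successive roots satisfy $\mu_n=n\pi+o(1)$, so $\lambda_n=\mu_n^2\sim n^2\pi^2$, the claimed Weyl-type asymptotics.

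Finally I would use these asymptotics to establish convergence and smoothness. From $\sum_n a_n^2=\|\rho_0\|_{L^2}^2<\infty$ the coefficients $a_n$ are bounded, while the explicit form of $\Psi_n$ together with $\mu_n\sim n\pi$ gives, after normalization, polynomial bounds $\|\p_u^k\Psi_n\|_\infty\le C_k\,(n\pi)^k$. Hence for every $t\ge t_0>0$ and all $j,k\ge 0$ the formally differentiated series $\sum_n a_n(-\lambda_n)^{j}e^{-\lambda_n t}\p_u^k\Psi_n(u)$ is dominated in absolute value by a convergent series of the form $\sum_n \|\rho_0\|_{L^2}\,(n^2\pi^2)^{j}\,C_k(n\pi)^k\, e^{-c n^2 t_0}$, because the factor $e^{-\lambda_n t}$ decays faster than any power of $n$ and thereby beats all the polynomial growth. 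This justifies term-by-term differentiation, shows that \eqref{eq6semi} converges absolutely and uniformly on $[t_0,T]\times[0,1]$ to a function that is $C^\infty$ in space and time for $t>0$, solves \eqref{hom_hydroeq} together with its boundary conditions, and satisfies $(T_t\rho_0)(u)\to\rho_0$ in $L^2$ as $t\downarrow 0$ by \eqref{b3}. The main technical point is not any single estimate but the coupling of the eigenvalue asymptotics $\lambda_n\sim n^2\pi^2$ with uniform polynomial-in-$n$ control of the eigenfunctions and their derivatives, which is exactly what upgrades the abstract $L^2$ Sturm--Liouville expansion into a bona fide smooth solution.
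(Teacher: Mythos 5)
Your proof is correct and follows essentially the same route as the paper: separation of variables leading to the regular Sturm--Liouville problem \eqref{eq34}--\eqref{eq36}, the same transcendental equation $\tan\mu = 2\mu/(\mu^2-1)$ with roots $\mu_n\sim n\pi$ giving $\lambda_n\sim n^2\pi^2$, and the eigenfunction expansion \eqref{eq6semi}. If anything, you supply details the paper leaves implicit---the integration-by-parts identity showing all eigenvalues are strictly positive, and the normalized bounds $\|\p_u^k\Psi_n\|_\infty\le C_k(n\pi)^k$ justifying term-by-term differentiation---where the paper cites \cite{birkhoff89} for the spectral facts and asserts the final verification is ``a simple task.''
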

\begin{proof}
We start the proof with the associated \textit{Regular Sturm-Liouville Problem}, for details on this subject we refer to \cite{birkhoff89}, for instance.
 For $\lambda\in\bb R$, consider the following second-order ordinary differential equation:
 \begin{numcases}{}
   \Psi''(u)+\lambda \Psi(u) \;=\; 0\,, \quad u\in(0,1)\,, &  \label{eq34}   \\
  \Psi(0)\;=\;\Psi'(0)\,, &  \label{eq35}   \\
  \Psi(1)\;=\;-\Psi'(1)\,. &  \label{eq36}   
\end{numcases}
For $\lambda\leq 0$  there is no solution except the trivial one. For $\lambda>0$, the general solution of 
\eqref{eq34} is of the form
$\Psi(u)\;=\;A\sin (\sqrt{\lambda} \, u )+ B \cos(\sqrt{\lambda} \,u)\,.$
Then,  the boundary condition \eqref{eq35} leads to 
$B\;=\; \sqrt{\lambda}A\,.$
To avoid the null solution, we henceforth impose $A\neq 0$.
On the other hand, the boundary condition \eqref{eq36} gives
\begin{equation*}
A\sin(\sqrt{\lambda}) + A\sqrt{\lambda}\cos (\sqrt{\lambda})\; =\;
-\,\Big[A\sqrt{\lambda}\cos(\sqrt{\lambda}) - A(\sqrt{\lambda})^2\sin (\sqrt{\lambda})\Big]\,,
\end{equation*}
which becomes the transcendental equation
\begin{equation}\label{trans}
\tan (\sqrt{\lambda}) \;=\; \frac{2\sqrt{\lambda}}{\lambda-1}\,,
\end{equation}
for which there exists a countable number of solutions. 
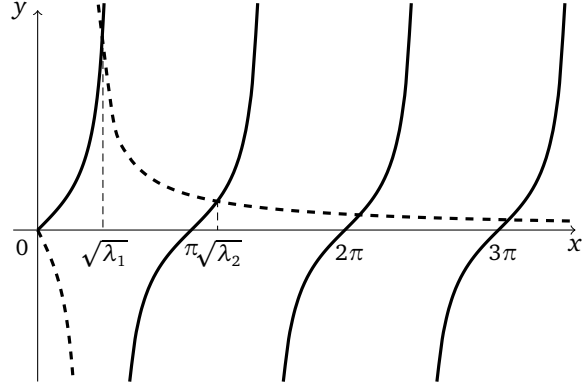
\begin{figure}[!htb]
\centering
\begin{tikzpicture}[scale=0.65,smooth];
\draw[->] (0,-3.1)--(0,4.5) node[anchor=east]{$y$};
\draw[->] (-0.5,0)--(11,0) node[anchor=north]{$x$};

\draw[black = solid,  very thick ] plot   [domain=0:5*pi/12+0.05](\x,{tan(\x r)});

\draw[black = solid,  very thick  ] plot    [domain=7*pi/12+0.05:17*pi/12+0.05](\x,{tan(\x r)});

\draw[black = solid,  very thick  ] plot   [domain=19*pi/12+0.05:29*pi/12+0.05](\x,{tan(\x r)});
\draw[black = solid,  very thick  ] plot   [domain=31*pi/12+0.05:41*pi/12+0.05](\x,{tan(\x r)});

\draw (0,0) node[anchor=north east]{\small $0$};
\draw (pi,-0.1) node[anchor=north]{\small $\pi$};
\draw (2*pi+0.1,-0.1) node[anchor=north]{\small $2\pi$};
\draw (3*pi+0.1,-0.1) node[below]{\small $3\pi$};

\draw [dashed, very thick, domain=0:0.72] plot (\x, {2*\x*pow(\x*\x-1,-1)}); 
\draw [dashed, smooth, very thick, domain=1.24:9] plot (\x, {2*\x*pow(\x*\x-1,-1)}); 
\draw [dashed, smooth, very thick, domain=9.1:11] plot (\x, {2*\x*pow(\x*\x-1,-1)}); 

\draw[densely dashed] (1.335,4) -- (1.335,0) node[below]{\small $\sqrt{\lambda_1}$}; 
\draw[densely dashed] (3.68,0.63) -- (3.68,0) node[below]{\small $\sqrt{\lambda_2}$};
\end{tikzpicture}
\bigskip

\caption{In solid line, the graph of $f(u) = \tan (u)$. In dashed line, the graph of $g(u)=2u/(u^2-1)$.}\label{fig3}
\end{figure}

 For each $n\in\bb N$, let $\lambda_n$ be the solution of \eqref{trans} satisfying $(n-1)\pi\leq \sqrt{\lambda_n}\leq n\pi$, see  the Figure~\ref{fig3}. Thus $0<\lambda_1<\lambda_2<\lambda_3<\cdots$ and $\lambda_n\sim n^2\pi^2$ as $n\to\infty$. Denote now
\begin{equation}\label{eigen42}
\Psi_n(u)\;=\;A_n\sin (\sqrt{\lambda_n} \,u )+ A_n\sqrt{\lambda_n} \cos(\sqrt{\lambda_n} \, u)\,,
\end{equation} 
where $A_n$ is a normalizing constant in such a way $\Psi_n$ has unitary $L^2[0,1]$-norm.

    Note that $\Psi_n$ is the solution of the Sturm-Liouville problem above associated to the eigenvalue $-\lambda_n$. Moreover, the set $\{\Psi_n\}_{n\in \bb N}$ is an orthonormal basis of  $L^2[0,1]$.  The  orthogonality comes  from the fact the associated Sturm-Liouville operator is self-adjoint and a proof of it can be found  in \cite[page 303, Thm 1]{birkhoff89}. The proof of completeness can be found in \cite[page 363, Section 9]{birkhoff89}.

Since now we have an orthonormal basis of eigenfunctions, we will apply the heuristics of the classical  method of separation of variables to obtain the solution of  \eqref{hom_hydroeq}-\eqref{b3}. We start supposing that the solution has the form $\rho(u,t)=\Psi(u)\Phi(t)$. By \eqref{hom_hydroeq} we get that
\begin{equation*}
\frac{\Phi'(t)}{\Phi(t)}\;=\;-\lambda\;=\;\frac{\Psi''(u)}{\Psi(u)}\,,
\end{equation*} 
for some $\lambda\in \bb R$. The first equality in the previous display gives $\Phi(t)=ce^{-\lambda t}$ for some $c\in \bb R$, while the second equality, under the boundary conditions in \eqref{b2} and \eqref{b3}, corresponds to the Sturm-Liouville problem stated above. Therefore, the  solution we seek  will be of the form
$c e^{-\lambda_n t}\Psi_n(u)$. Assuming that $\rho_0\in L^2[0,1]$, the completeness of the basis permits to write
\begin{equation*}
\rho_0(u)\;=\;\sum_{n=1}^\infty a_n\Psi_n(u)\,,
\end{equation*}
where $a_n=\<\rho_0\,,\Psi_n\>$ are the Fourier coefficients, and $\<\cdot,\cdot\>$  is the inner product w.r.t. $L^2[0,1]$. Now it is a simple task to check that the solution of \eqref{hom_hydroeq}-\eqref{b3} is given by
$\rho(t,u)\;=\;\sum_{n=1}^\infty a_n\,e^{-\lambda_n t}\,\Psi_n(u)$.
This completes the proof.
\end{proof}

The previous proposition implies the next results, which play an important role when showing the uniqueness of the associated generalized Ornstein-Uhlenbeck process of Theorem \ref{thm27}.

\begin{corollary}\label{cor42}
If $f\in \mc S$, then for any $t>0$, we have that $ T_t f \in \mc S$ and  $\Delta T_t f\in \mc S$.
\end{corollary}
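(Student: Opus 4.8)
The plan is to reduce everything to the spectral representation of Proposition \ref{prop23}: write $T_tf=\sum_{n\geq 1}a_n e^{-\lambda_n t}\Psi_n$ with $a_n=\<f,\Psi_n\>$, and then combine a purely algebraic fact about each eigenfunction with a term-by-term differentiation argument that is legitimate thanks to the bound $\lambda_n\sim n^2\pi^2$.

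First I would check that each eigenfunction already belongs to $\mc S$. Since $\Psi_n$ solves \eqref{eq34}, we have $\p_u^2\Psi_n=-\lambda_n\Psi_n$, and hence by induction $\p_u^{2k}\Psi_n=(-\lambda_n)^k\Psi_n$ and $\p_u^{2k+1}\Psi_n=(-\lambda_n)^k\p_u\Psi_n$ for every $k\geq 0$. Evaluating at the endpoints and inserting the boundary conditions \eqref{eq35} and \eqref{eq36}, namely $\Psi_n(0)=\p_u\Psi_n(0)$ and $\Psi_n(1)=-\p_u\Psi_n(1)$, gives $\p_u^{2k+1}\Psi_n(0)=(-\lambda_n)^k\p_u\Psi_n(0)=(-\lambda_n)^k\Psi_n(0)=\p_u^{2k}\Psi_n(0)$ and $\p_u^{2k+1}\Psi_n(1)=(-\lambda_n)^k\p_u\Psi_n(1)=-(-\lambda_n)^k\Psi_n(1)=-\p_u^{2k}\Psi_n(1)$, which are exactly the conditions of Definition \ref{def1}. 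Thus $\Psi_n\in\mc S$ for every $n$.

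Next I would justify differentiating the series \eqref{eq6semi} term by term, arbitrarily many times, uniformly on the closed interval $[0,1]$, for each fixed $t>0$. From the explicit form \eqref{eigen42} together with the normalization $\|\Psi_n\|_{L^2[0,1]}=1$, the constant $A_n$ is of order $\lambda_n^{-1/2}$, so that $\|\p_u^j\Psi_n\|_\infty\leq C_j\,\lambda_n^{j/2}$ for a constant $C_j$ depending only on $j$. Since $|a_n|=|\<f,\Psi_n\>|\leq\|f\|_{L^2[0,1]}$ is bounded and $\lambda_n\sim n^2\pi^2$, the majorant $\sum_n|a_n|\,e^{-\lambda_n t}\|\p_u^j\Psi_n\|_\infty\leq C\sum_n\lambda_n^{j/2}e^{-\lambda_n t}$ is finite for every $j$ and every $t>0$. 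Hence $T_tf\in C^\infty([0,1])$ and $\p_u^j(T_tf)=\sum_n a_n e^{-\lambda_n t}\p_u^j\Psi_n$ with uniform convergence up to the boundary, so endpoint values may be computed term by term. Because each $\Psi_n$ satisfies the relations of Definition \ref{def1} and those relations are linear in the derivatives, they are inherited by the sum; explicitly $\p_u^{2k+1}(T_tf)(0)=\sum_n a_n e^{-\lambda_n t}\p_u^{2k+1}\Psi_n(0)=\sum_n a_n e^{-\lambda_n t}\p_u^{2k}\Psi_n(0)=\p_u^{2k}(T_tf)(0)$, and similarly at $u=1$. Therefore $T_tf\in\mc S$.

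Finally, for $\Delta T_tf$ I would repeat the argument with the coefficients $-\lambda_n a_n$ replacing $a_n$. Since $T_tf$ is smooth up to the boundary, $\Delta T_tf=\p_u^2 T_tf=-\sum_n\lambda_n a_n e^{-\lambda_n t}\Psi_n$; the new coefficients satisfy $|\lambda_n a_n|\leq\lambda_n\|f\|_{L^2[0,1]}$, which grows only polynomially in $n$, so $e^{-\lambda_n t}$ again dominates the factor $\lambda_n^{1+j/2}$ in the $j$-th derivative series, all of which then converge uniformly on $[0,1]$. The same term-by-term reasoning as above yields $\Delta T_tf\in\mc S$. The one genuinely technical point is the uniform convergence up to the boundary, which rests entirely on the spectral estimate $\lambda_n\sim n^2\pi^2$ from Proposition \ref{prop23} and on the order-$\lambda_n^{-1/2}$ bound for the normalizing constants $A_n$; once these are secured the boundary conditions pass to the limit automatically, precisely because every eigenfunction already lies in $\mc S$.
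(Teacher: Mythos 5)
Your proof is correct and follows essentially the same route as the paper: show that each eigenfunction $\Psi_n$ lies in $\mc S$ (via $\p_u^2\Psi_n=-\lambda_n\Psi_n$ and the Sturm--Liouville boundary conditions \eqref{eq35}--\eqref{eq36}), then transfer the membership to $T_tf$ and $\Delta T_tf$ through the exponentially convergent series \eqref{eq6semi}. The only difference is that you make explicit the quantitative estimates ($A_n=O(\lambda_n^{-1/2})$, $\|\p_u^j\Psi_n\|_\infty\leq C_j\lambda_n^{j/2}$, the Weierstrass majorant) that the paper's one-line proof leaves implicit.
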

\begin{proof}
Since  $\Psi_n$ is a linear combination of sine and cosine, and the conditions  of Definition~\ref{def1} are linear, then $\Psi_n\in \mc S$ for all $n\in \bb N$. 
This property is inherited by $T_t f$ and $\Delta T_t f$ due to the explicit formula \eqref{eq6semi} and its exponential convergence. 
\end{proof}

\begin{corollary}\label{cor421}
 For any $f\in \mc S$, we have
$\displaystyle\lim_{t\to\infty} T_t f \;=\;0
$
 in the supremum norm. Moreover, this convergence is exponentially fast.\end{corollary}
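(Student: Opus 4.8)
The plan is to use the explicit eigenfunction expansion from Proposition~\ref{prop23} together with the growth estimate $\lambda_n \sim n^2\pi^2$ to read off the decay of $T_t f$ directly from its Fourier series. For $f \in \mc S \subset L^2[0,1]$, write $f = \sum_{n=1}^\infty a_n \Psi_n$ with $a_n = \<f,\Psi_n\>$, so that by \eqref{eq6semi} we have $(T_t f)(u) = \sum_{n=1}^\infty a_n e^{-\lambda_n t}\Psi_n(u)$. The first step is to control the supremum norm of each eigenfunction: since $\Psi_n(u) = A_n \sin(\sqrt{\lambda_n}\,u) + A_n\sqrt{\lambda_n}\cos(\sqrt{\lambda_n}\,u)$, the normalizing constant $A_n$ is of order $1/\sqrt{\lambda_n}$ (because the $L^2$-norm of the unnormalized eigenfunction is dominated by its cosine part, whose amplitude is $\sqrt{\lambda_n}$), and hence $\|\Psi_n\|_\infty = \sup_{u}|\Psi_n(u)|$ is bounded uniformly in $n$ by a constant.

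Granting that uniform bound $\|\Psi_n\|_\infty \le C$, I would estimate
\begin{equation*}
\|T_t f\|_\infty \;\le\; \sum_{n=1}^\infty |a_n|\, e^{-\lambda_n t}\,\|\Psi_n\|_\infty \;\le\; C\sum_{n=1}^\infty |a_n|\, e^{-\lambda_n t}\,.
\end{equation*}
The next step is to handle the Fourier coefficients. Since $f \in C^\infty([0,1])$ and satisfies the boundary conditions defining $\mc S$, repeated integration by parts (or equivalently, self-adjointness of the Sturm-Liouville operator applied iteratively) shows that $\lambda_n^k a_n = \<\Delta^k f, \Psi_n\>$ for every $k$, so the $a_n$ decay faster than any power of $\lambda_n^{-1}$; in particular $\sum_n |a_n|^2 \lambda_n^2 < \infty$, which by Cauchy--Schwarz together with $\sum_n \lambda_n^{-2} < \infty$ (a consequence of $\lambda_n \sim n^2\pi^2$) gives $\sum_n |a_n| < \infty$. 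Then for $t \ge 1$ I would factor out the slowest mode, bounding $e^{-\lambda_n t} \le e^{-\lambda_1(t-1)} e^{-\lambda_n}$ for $t\ge 1$, to obtain
\begin{equation*}
\|T_t f\|_\infty \;\le\; C\, e^{-\lambda_1(t-1)} \sum_{n=1}^\infty |a_n|\, e^{-\lambda_n}\,,
\end{equation*}
where the remaining sum is a finite constant depending only on $f$. This yields both $\lim_{t\to\infty}\|T_t f\|_\infty = 0$ and the exponential rate $e^{-\lambda_1 t}$, since $\lambda_1 > 0$.

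**The main obstacle** I anticipate is the uniform-in-$n$ bound $\|\Psi_n\|_\infty \le C$, which is the only genuinely analytic point; everything else is bookkeeping on an absolutely convergent series. The delicate issue is pinning down the order of the normalizing constant $A_n$: one must verify that $\int_0^1 \Psi_n(u)^2\,du = A_n^2 \int_0^1 \bigl(\sin(\sqrt{\lambda_n}u) + \sqrt{\lambda_n}\cos(\sqrt{\lambda_n}u)\bigr)^2 du$ grows like $A_n^2 \lambda_n$, so that the unit-norm condition forces $A_n \asymp \lambda_n^{-1/2}$; the cross term and the $\sin^2$ term are lower order, but this must be checked rather than asserted. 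Once $A_n \asymp \lambda_n^{-1/2}$ is established, the cosine term of $\Psi_n$ has amplitude $A_n\sqrt{\lambda_n} \asymp 1$ and the sine term has amplitude $A_n \to 0$, so $\|\Psi_n\|_\infty$ is indeed uniformly bounded. I would also remark that the same expansion immediately gives exponential decay in every seminorm $\|\cdot\|_k$, not merely the supremum norm, since differentiating \eqref{eq6semi} term by term only inserts powers of $\sqrt{\lambda_n}$ that are absorbed by the rapid decay of $a_n$; this stronger statement is what is actually needed for the time integrals in Theorem~\ref{flustat} to converge.
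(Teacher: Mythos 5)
Your proof is correct and follows exactly the route the paper takes: the paper's own proof of Corollary \ref{cor421} is the one-line remark ``Immediate from \eqref{eq6semi}'', i.e.\ the eigenfunction expansion with $\lambda_n \sim n^2\pi^2$, and your argument simply supplies the details (uniform bound $\|\Psi_n\|_\infty \le C$ via $A_n \asymp \lambda_n^{-1/2}$, summability of the coefficients $a_n$ from self-adjointness and the boundary conditions defining $\mc S$, and extraction of the spectral-gap rate $e^{-\lambda_1 t}$) that the authors leave implicit.
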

\begin{proof}
Immediate  from \eqref{eq6semi}.
\end{proof}

\begin{corollary}\label{corinv} The operator $\Delta :\mc S\to\mc S$ is a bijection. Moreover, for any $f\in \mc S$,
\begin{equation*}\begin{split}
&(a)\quad (-\Delta)^{-1} f(u)\;=\; \int_0^{+\infty}T_t f(u) \,dt\,,\\
&(b)\quad \lim_{t\to\infty} \int_0^t\int_0^1 2 \,\big(T_{r}f (u)\big)^2\,du\,dr\;=\;\int_0^1 f(u)(-\Delta)^{-1}f(u)\,du\,.
\end{split}\end{equation*}
\end{corollary}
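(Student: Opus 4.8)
The plan is to establish the three claims in sequence, leaning heavily on the spectral representation \eqref{eq6semi} and the fact that $\lambda_n \sim n^2\pi^2$ with all $\lambda_n > 0$. First I would prove that $\Delta:\mc S\to\mc S$ is a bijection. For injectivity, suppose $\Delta f = 0$ for some $f\in\mc S$; writing $f=\sum_n a_n\Psi_n$, applying $\Delta$ term by term gives $\sum_n a_n(-\lambda_n)\Psi_n = 0$, and since $\{\Psi_n\}$ is an orthonormal basis and each $\lambda_n\neq 0$, we get $a_n=0$ for all $n$, hence $f=0$. For surjectivity, given $g=\sum_n b_n\Psi_n\in\mc S$, define $f=\sum_n (b_n/(-\lambda_n))\Psi_n$; the extra factor $\lambda_n^{-1}\sim (n\pi)^{-2}$ only improves convergence of the series and of all its term-by-term derivatives, so $f$ is smooth, lies in $\mc S$ (each $\Psi_n\in\mc S$ and the boundary conditions of Definition \ref{def1} are preserved under the exponentially convergent series, exactly as in Corollary \ref{cor42}), and satisfies $\Delta f = g$.

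Next, part $(a)$. I would verify the identity $(-\Delta)^{-1}f(u)=\int_0^{+\infty}T_t f(u)\,dt$ directly from the spectral expansion. Writing $f=\sum_n a_n\Psi_n$, we have $T_t f=\sum_n a_n e^{-\lambda_n t}\Psi_n$, and by Corollary \ref{cor421} this decays exponentially, so the time integral converges. Integrating term by term — justified by the uniform exponential bound $\lambda_n\sim n^2\pi^2$, which makes $\sum_n |a_n|\int_0^\infty e^{-\lambda_n t}\,dt = \sum_n |a_n|/\lambda_n$ absolutely convergent together with all its spatial derivatives — yields
\begin{equation*}
\int_0^{+\infty}T_t f(u)\,dt \;=\; \sum_{n=1}^\infty a_n\Big(\int_0^{+\infty} e^{-\lambda_n t}\,dt\Big)\Psi_n(u)\;=\;\sum_{n=1}^\infty \frac{a_n}{\lambda_n}\,\Psi_n(u)\,,
\end{equation*}
which is precisely $(-\Delta)^{-1}f(u)$ by the formula for the inverse obtained above.

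Finally, part $(b)$. Here I would compute both sides via Parseval's identity in $L^2[0,1]$. For the left-hand side, using $T_r f=\sum_n a_n e^{-\lambda_n r}\Psi_n$ and orthonormality,
\begin{equation*}
\int_0^1 \big(T_r f(u)\big)^2\,du \;=\; \sum_{n=1}^\infty a_n^2\, e^{-2\lambda_n r}\,,
\end{equation*}
and integrating in $r$ from $0$ to $t$ and letting $t\to\infty$ gives $\sum_n a_n^2/(2\lambda_n)$, so that $\lim_{t\to\infty}\int_0^t\int_0^1 2(T_r f)^2\,du\,dr=\sum_n a_n^2/\lambda_n$. For the right-hand side, part $(a)$ gives $(-\Delta)^{-1}f=\sum_n (a_n/\lambda_n)\Psi_n$, so by Parseval again $\int_0^1 f(u)(-\Delta)^{-1}f(u)\,du=\sum_n a_n\cdot(a_n/\lambda_n)=\sum_n a_n^2/\lambda_n$, matching the left-hand side. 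The only point requiring care — the main, if mild, obstacle — is the justification of interchanging the infinite sum with the integrals in parts $(a)$ and $(b)$; this is handled uniformly by the lower bound $\lambda_n\gtrsim n^2$, which guarantees absolute convergence of all the series involved (including after term-by-term spatial differentiation, needed to land back in $\mc S$ with its seminorm topology \eqref{semi-norm}).
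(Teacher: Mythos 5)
Your proposal is correct and follows essentially the same route as the paper: both rest on the expansion $f=\sum_n a_n\Psi_n$ in the Sturm--Liouville eigenbasis with eigenvalues $-\lambda_n$, term-by-term time integration justified by $\lambda_n\sim n^2\pi^2$, and orthonormality of the $\Psi_n$. The only cosmetic differences are that you spell out injectivity and surjectivity of $\Delta$ (which the paper leaves implicit in the computation of part $(a)$), and that in part $(b)$ you evaluate both sides as the series $\sum_n a_n^2/\lambda_n$ via Parseval, whereas the paper uses the identity $\langle T_r f, T_r f\rangle=\langle f, T_{2r}f\rangle$ followed by a change of variables, Fubini's theorem and part $(a)$ --- two presentations of the same spectral computation.
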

\begin{proof}
First of all, notice that all the time integrals above are well defined due to the Corollary~\ref{cor421}. We start by showing $(a)$. Let $f\in \mc S$ and write
$f\;=\; \sum_{n=1}^\infty a_n \Psi_n$.
Then, by Proposition \ref{prop23},
\begin{equation*}
\begin{split}
& \Delta \int_0^\infty T_t f (u) \,dt  \;=\;\Delta \int_0^\infty \Big(\sum_{n=1}^\infty a_n e^{-\lambda_n t} \Psi_n(u) \Big)\,dt\;=\;
\Delta \sum_{n=1}^\infty \Big(\int_0^\infty  a_n e^{-\lambda_n t} \Psi_n(u) \Big)dt\\
&=\; \Delta\sum_{n=1}^\infty  \big(\frac{a_n}{\lambda_n}\big) \, \Psi_n(u) \;=\; \sum_{n=1}^\infty  \big(\frac{a_n}{\lambda_n}\big) \Delta\, \Psi_n(u) \;=\;- \sum_{n=1}^\infty  a_n \Psi_n(u) \;=\;-f(u)\,,
\end{split}
\end{equation*}
where in the penultimate equality we have used the fact that $-\lambda_n$ is the eigenvalue associated to the eigenfunction $\Psi_n$.
Now we prove $(b)$. 
Since $\{\Psi_n\}_{n\in \bb N}$ is an orthonormal basis of  $L^2[0,1]$, again by Proposition \ref{prop23} we have that
$\< T_rf, T_r f\>=\; \< f,T_{2r}f\>$. Therefore,
\begin{equation*}
\int_0^t\int_0^1 2 \, \big(T_{r}f (u)\big)^2\,du\, dr\;=\;\int_0^t\int_0^12 \, f (u) T_{2r} f (u)\,du\, dr\;=\;\int_0^{2t}\int_0^1 f (u) T_{r} f (u)\,du\, dr.
\end{equation*}
Now, first apply Fubini's Theorem, then take the limit as $t\to\infty$ and recall $(a)$. This finishes  the  proof.
\end{proof}

\begin{corollary} \label{cor43}
Let  $\overline{\rho}(\cdot)$ be the stationary solution of \eqref{hydroeq} disregarding the initial condition. Then, the solution $\rho(t,\cdot)$ of \eqref{hydroeq} is given by
\begin{equation*}
\rho(t,u)\;=\;  \overline{\rho}(u) +T_t\big( \rho_0-\overline{\rho}\big)\,(u)\,,\quad u\in [0,1]\,,\; t\geq 0\,,
\end{equation*}
 where $T_t$ is the semigroup previously described. In particular, the solution $\rho(t,\cdot)$ of \eqref{hydroeq} is smooth in space and time.
\end{corollary}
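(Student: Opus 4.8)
The plan is to linearize the problem by subtracting off the stationary profile, thereby reducing the inhomogeneous Robin problem \eqref{hydroeq} to the homogeneous one \eqref{hom_hydroeq}--\eqref{b3} governed by the semigroup $T_t$ of Definition~\ref{def2}. First I would record the structure of $\overline{\rho}$: being stationary for \eqref{hydroeq} it satisfies $\partial_u^2\overline{\rho}=0$ together with the boundary identities $\partial_u\overline{\rho}(0)=\overline{\rho}(0)-\alpha$ and $\partial_u\overline{\rho}(1)=\beta-\overline{\rho}(1)$. In particular $\overline{\rho}$ is affine, hence smooth, and explicitly $\overline{\rho}(u)=\tfrac{\beta-\alpha}{3}\,u+\tfrac{\beta+2\alpha}{3}$, as already recorded elsewhere in the paper.

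Next I would set $w(t,u):=\rho(t,u)-\overline{\rho}(u)$, where $\rho$ is the solution of \eqref{hydroeq}, and check that $w$ solves precisely the homogeneous problem \eqref{hom_hydroeq}--\eqref{b3} with initial datum $\rho_0-\overline{\rho}$. For the bulk equation, since $\partial_u^2\overline{\rho}=0$ one has $\partial_t w=\partial_t\rho=\partial_u^2\rho=\partial_u^2 w$, recovering \eqref{hom_hydroeq}. For the boundaries, subtracting the boundary identities of $\overline{\rho}$ from those of $\rho$ yields $\partial_u w(t,0)=[\rho(t,0)-\alpha]-[\overline{\rho}(0)-\alpha]=w(t,0)$ and $\partial_u w(t,1)=[\beta-\rho(t,1)]-[\beta-\overline{\rho}(1)]=-w(t,1)$, which are exactly \eqref{b1} and \eqref{b2}; the $\alpha$ and $\beta$ terms cancel, which is the whole point of the subtraction. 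The initial condition $w(0,\cdot)=\rho_0-\overline{\rho}$ is immediate from \eqref{b3}.

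Since $\rho_0$ takes values in $[0,1]$ and $\overline{\rho}$ is bounded affine, the initial datum $\rho_0-\overline{\rho}$ lies in $L^2[0,1]$, so Proposition~\ref{prop23} applies and identifies the unique solution of the homogeneous problem as $w(t,\cdot)=T_t(\rho_0-\overline{\rho})$. Undoing the substitution gives $\rho(t,u)=\overline{\rho}(u)+T_t(\rho_0-\overline{\rho})(u)$, which is the claimed formula. The smoothness assertion then follows at once: $\overline{\rho}$ is affine and, by the exponential convergence of the series \eqref{eq6semi} established in Proposition~\ref{prop23}, $T_t(\rho_0-\overline{\rho})$ is smooth in space and time for $t>0$; the sum of the two is therefore smooth as well.

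The only point requiring care—and the step I would flag as the main obstacle—is the logical direction of the argument relative to uniqueness. One should not assume $\rho$ is smooth a priori; rather, the clean way is to \emph{define} the candidate $\tilde\rho(t,u):=\overline{\rho}(u)+T_t(\rho_0-\overline{\rho})(u)$, verify directly (via the computation above, now read forwards) that this smooth function is a weak solution of \eqref{hydroeq} with initial datum $\rho_0$, and then invoke the uniqueness of weak solutions of \eqref{hydroeq} (guaranteed by the hydrodynamic limit theorem) to conclude $\tilde\rho=\rho$. Establishing that $\tilde\rho$ satisfies the weak formulation, in particular that it attains the initial datum $\rho_0$ in the required $L^2$ sense as $t\downarrow 0$, is the routine but slightly delicate verification underlying this identification.
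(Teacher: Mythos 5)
Your proposal is correct and takes essentially the same route as the paper: the paper's proof simply notes that $\overline{\rho}$ is affine with vanishing time and second space derivatives, satisfies the non-homogeneous boundary conditions, and concludes "by a direct verification" --- which is precisely the candidate-plus-uniqueness argument you give in your final paragraph. Your write-up just makes explicit the boundary-term cancellation and the appeal to uniqueness of weak solutions that the paper leaves implicit.
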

\begin{proof}
First we note that from \cite[Theorem 2.2]{bmns} we have, for $u\in [0,1]$, that
\begin{equation*}
\overline{\rho}(u)\;=\; \Big(\frac{\beta-\alpha}{3}\Big)u +\Big(\alpha+\frac{\beta-\alpha}{3}\Big)\,.
\end{equation*}
 Then, the time derivative of the function $\overline{\rho}(\cdot)$ is null,  as well as its second derivative in space. On other hand, $\overline{\rho}(\cdot)$ satisfies the required non-homogeneous boundary conditions. The result then easily  follows by a  direct verification.
\end{proof}
\begin{corollary}\label{cor45}
Let  $\rho(t,\cdot)$ be the solution of \eqref{hydroeq}. Then, for any $u\in[0,1]$,
\begin{equation*}
\lim_{t\to \infty} \rho(t,u)\;=\;  \overline{\rho}(u)\,,
\end{equation*}
in the supremum norm.
\end{corollary}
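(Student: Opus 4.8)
The plan is to reduce the statement to the decay of the homogeneous semigroup and then push a sup-norm estimate through the eigenfunction expansion of Proposition~\ref{prop23}. First I would invoke Corollary~\ref{cor43}, which gives the decomposition $\rho(t,u)=\overline{\rho}(u)+T_t(\rho_0-\overline{\rho})(u)$, so that
\[
\rho(t,u)-\overline{\rho}(u)\;=\;T_t g(u)\,,\qquad g:=\rho_0-\overline{\rho}\,.
\]
Thus it suffices to show that $\lim_{t\to\infty}\|T_t g\|_\infty=0$. Since $\rho_0$ takes values in $[0,1]$ and $\overline{\rho}$ is affine on $[0,1]$, the function $g$ lies in $L^\infty[0,1]\subset L^2[0,1]$, so the representation \eqref{eq6semi} of Proposition~\ref{prop23} applies to $g$.

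A point requiring care is that one cannot simply quote Corollary~\ref{cor421}, since $g$ need not belong to $\mc S$ (the initial profile $\rho_0$ is merely measurable and satisfies no boundary conditions). Instead I would argue directly from the expansion: writing $b_n=\langle g,\Psi_n\rangle$, Proposition~\ref{prop23} gives $T_t g(u)=\sum_{n\ge 1}b_n\,e^{-\lambda_n t}\,\Psi_n(u)$ for $t>0$, with exponentially fast convergence. Because the $\Psi_n$ are $L^2[0,1]$-normalized, Cauchy--Schwarz yields the uniform bound $|b_n|\le \|g\|_{L^2[0,1]}$.

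The main quantitative step is the sup-norm estimate. From the explicit form \eqref{eigen42}, together with the fact that the normalizing constants satisfy $\sup_n A_n<\infty$, one obtains the crude bound $\|\Psi_n\|_\infty\le C\sqrt{\lambda_n}$ for some constant $C$. Combining this with $|b_n|\le\|g\|_{L^2[0,1]}$ and the growth $\lambda_n\sim n^2\pi^2$ from Proposition~\ref{prop23}, I would estimate, for $t\ge 1$,
\[
\|T_t g\|_\infty\;\le\;\|g\|_{L^2[0,1]}\,C\sum_{n\ge1}\sqrt{\lambda_n}\,e^{-\lambda_n t}
\;\le\;\|g\|_{L^2[0,1]}\,C\,e^{-\lambda_1(t-1)}\sum_{n\ge1}\sqrt{\lambda_n}\,e^{-\lambda_n}\,,
\]
where I used $\lambda_n\ge\lambda_1$ to factor $e^{-\lambda_n t}=e^{-\lambda_n}e^{-\lambda_n(t-1)}\le e^{-\lambda_n}e^{-\lambda_1(t-1)}$, and the remaining series converges since $\lambda_n\sim n^2\pi^2$. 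As $\lambda_1>0$, the right-hand side tends to $0$ exponentially fast as $t\to\infty$, which gives the claim.

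The only genuine obstacle is the passage from the pointwise (or $L^2$) eigenfunction expansion to convergence in the supremum norm, i.e. controlling $\sum_n|b_n|\,e^{-\lambda_n t}\,\|\Psi_n\|_\infty$ uniformly in $u$. This is handled entirely by the super-exponential factor $e^{-\lambda_n t}$ with $\lambda_n\sim n^2\pi^2$, which dominates both the crude eigenfunction bound $\|\Psi_n\|_\infty\le C\sqrt{\lambda_n}$ and the uniform bound on $|b_n|$; in particular no sharp asymptotics for the $\Psi_n$ are needed. Everything else is an immediate consequence of Corollary~\ref{cor43} and Proposition~\ref{prop23}.
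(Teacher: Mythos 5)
Your proof is correct and follows the same skeleton as the paper's, whose entire argument is ``Immediate from the two previous corollaries'': the decomposition $\rho(t,\cdot)=\overline{\rho}+T_t(\rho_0-\overline{\rho})$ of Corollary~\ref{cor43} combined with sup-norm decay of the semigroup. Where you genuinely add something is in the decay step. You observe, correctly, that Corollary~\ref{cor421} is stated only for $f\in\mc S$, whereas $g=\rho_0-\overline{\rho}$ is merely a bounded measurable ($L^2$) function, so the paper's citation does not literally apply; you repair this by running the decay estimate directly through the expansion \eqref{eq6semi} of Proposition~\ref{prop23}, using $|b_n|\le\|g\|_{L^2[0,1]}$, the crude bound $\|\Psi_n\|_\infty\le C\sqrt{\lambda_n}$, and $\lambda_n\sim n^2\pi^2$. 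This is precisely the content the paper implicitly relies on when Proposition~\ref{prop23} asserts exponential convergence of the series for an arbitrary $L^2$ initial datum, so your route is a more careful (and slightly more general) rendering of the same argument rather than a different one. The only assertion you take on faith is $\sup_n A_n<\infty$; this is true and elementary: computing the normalization integral gives
\begin{equation*}
A_n^{-2}\;=\;\frac{1+\lambda_n}{2}+(\lambda_n-1)\,\frac{\sin(2\sqrt{\lambda_n})}{4\sqrt{\lambda_n}}+\frac{1-\cos(2\sqrt{\lambda_n})}{2}\;\ge\;\frac{1+\lambda_n}{4}\,,
\end{equation*}
where the last inequality uses $\sqrt{\lambda_n}\ge\sqrt{\lambda_1}>1$ (no root of \eqref{trans} lies in $(0,1]$). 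Hence $A_n=O(\lambda_n^{-1/2})$, so in fact $\|\Psi_n\|_\infty$ is uniformly bounded, even better than what you need; this is worth a line in a final write-up, but it is not a gap of substance.
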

\begin{proof}
Immediate from the two previous corollaries.
\end{proof}

\section{Proof of Theorem \ref{non_eq_flu}}\label{s3}
In this section we prove Theorem \ref{non_eq_flu}  which is the main result of this paper.  We start with the martingale decomposition of the process.
\subsection{Martingale decomposition}
Let $\phi:[0,T]\times [0,1]\to \bb R$ be a test function. We refer to \cite[p. 330]{kl} for a proof that
\begin{align}
M^n_t(\phi)& \;:=\;\mc Y_t^n(\phi)-\mc Y_0(\phi)-\int_{0}^t \Lambda^n(\phi)\,ds\,,\label{martingaleM} \\
N^n_t(\phi)& \;:=\;(M_t^n(\phi))^2-\int_{0}^t \Gamma^n_s(\phi)\,ds\label{quadratic}
\end{align}
are martingales with respect to the natural filtration $\mc F_t:=\sigma(\eta_s:\, s\leq t)$, where
\begin{equation*}
\begin{split}
& \Lambda^n(\phi)\;:=\; (\partial_s+n^2\mc L_n)\mc Y_s^n(\phi)\,,\\
& \Gamma^n_s(\phi)\;:=\;n^2\mc L_n \mc Y_s^n(\phi)^2-2\mc Y_s^n(\phi)n^2\mc L_n \mc Y_s^n(\phi)\,.
\end{split}
\end{equation*}
  By a long but elementary computation, 
\begin{equation}\label{int part of mart}
\begin{split}
\Lambda^n(\phi)\;=\;& \mc Y_s^n(\p_s \phi)+\frac{1}{\sqrt{n}}\sum_{x=1}^{n-1}\Delta_n \phi(\tfrac{x}{n}) \Big(\eta_{sn^2}(x)-\rho^n_s(x)\Big)\, ds\\
&+ \sqrt n\, \Big[\nabla_n^+\phi(0)-\phi\big(\pfrac{1}{n}\big)\Big]\Big(\eta_{sn^2}(1)-\rho_s^n(1)\Big)\\
&+\sqrt n\,\Big[\phi(\tfrac{n-1}{n})+\nabla_n^-\phi(1)\Big]\Big(\eta_{sn^2}(n-1)-\rho_s^n(n-1)\Big)\,.
\end{split}
\end{equation}
Note that the second term at the right hand side of the previous expression is  $\mc Y_s^n(\Delta_n \phi)$.
Above,  we have used the notation 
\begin{equation*}
\nabla_n^+\phi(x)\;=\;n\,\Big[\phi(\tfrac{x+1}{n})-\phi(\tfrac{x}{n})\Big]\quad 
\textrm{and}
\quad  \nabla_n^-\phi(x)\;=\;n\,\Big[\phi(\tfrac{x}{n})-\phi(\tfrac{x-1}{n})\Big]\,.
\end{equation*}
Also by direct computations we get that
\begin{equation}\label{quad var}
\begin{split}
& \Gamma^n_s(\phi)\;=\;\frac{1}{n}\sum_{x=1}^{n-2}\Big(\nabla^+_n \phi\big(\pfrac{x}{n}\big)\Big)^2 \Big(\eta_{sn^2}(x)-\eta_{sn^2}(x+1)\Big)^2\\
&+ \Big(\phi\big(\pfrac{1}{n}\big)\Big)^2\Big(\alpha-2\alpha \eta_{sn^2}(1)+\eta_{sn^2}(1)\Big)\;+\; \Big(\phi\big(\pfrac{n-1}{n}\big)\Big)^2\Big(\beta-2\beta \eta_{sn^2}(n-1)+\eta_{sn^2}(n-1)\Big)\,.
\end{split}
\end{equation}

\subsection{Proof of  Theorem \ref{non_eq_flu}}

\begin{lemma}\label{lemma32}
For $\phi\in\mc S$, the sequence of martingales $\{M^n_t(\phi);t\in [0,T]\}_{n\in \bb N}$ converges in the topology of $\mc D([0,T], \bb R)$, as $n\to\infty$, towards a Brownian motion $W_t(\phi)$ with  quadratic variation given by 
\begin{equation}\label{36}
\begin{split}
  \int_0^t \Bigg\{\int_0^12\chi(\rho(r,u)) \big(\nabla \phi(u)\big)^2du  \;+\;&\big[\alpha+(1-2\alpha)\rho(r,0)\big]\big(\phi(0)\big)^2\\
 &+ \big[\beta+(1-2\beta)\rho(r,1)\big]\big(\phi(1)\big)^2\Bigg\}\,dr\,,
\end{split}
\end{equation}
where $\rho(t,u)$ is the solution of the hydrodynamic equation \eqref{hydroeq}.
\end{lemma}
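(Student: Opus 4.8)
The plan is to apply the martingale central limit theorem, in the form given in \cite[Ch.~11]{kl}. According to it, to prove that $\{M^n_\cdot(\phi)\}_{n}$ converges in $\mc D([0,T],\bb R)$ to a continuous Gaussian martingale it suffices to verify two conditions: (i) the jumps vanish in the sense that $\lim_{n\to\infty}\bb E_{\mu_n}\big[\sup_{0\le t\le T}|M^n_t(\phi)-M^n_{t^-}(\phi)|\big]=0$; and (ii) for each fixed $t\in[0,T]$ the quadratic variation $\int_0^t\Gamma^n_s(\phi)\,ds$, with $\Gamma^n_s(\phi)$ given in \eqref{quad var}, converges in $\bb P_{\mu_n}$-probability to the deterministic expression \eqref{36}. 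Since the limit in (ii) is deterministic and continuous in $t$, the limiting martingale has independent Gaussian increments and is therefore the (time-changed) Brownian motion $W_\cdot(\phi)$ of the statement.

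Condition (i) is elementary. An exchange across a bond $(x,x+1)$ changes $\mc Y^n_\cdot(\phi)$ by $\tfrac{1}{\sqrt n}[\phi(\tfrac xn)-\phi(\tfrac{x+1}{n})](\eta(x)-\eta(x+1))$, which is $O(n^{-3/2})$ since $\phi$ is Lipschitz, while a boundary flip at $1$ or $n-1$ produces a change of order $n^{-1/2}$. Hence $\sup_{t\le T}|M^n_t(\phi)-M^n_{t^-}(\phi)|\le Cn^{-1/2}$ and (i) follows.

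For condition (ii) I would prove $L^1(\bb P_{\mu_n})$-convergence of $\int_0^t\Gamma^n_s(\phi)\,ds$ to \eqref{36}, separating the identification of the mean from the concentration estimate. For the mean, I use $(\eta(x)-\eta(x+1))^2=\eta(x)+\eta(x+1)-2\eta(x)\eta(x+1)$ together with $\bb E_{\mu_n}[\eta_{sn^2}(x)\eta_{sn^2}(x+1)]=\rho^n_s(x)\rho^n_s(x+1)+\varphi^n_s(x,x+1)$, where $\varphi^n_s(x,y)=\bb E_{\mu_n}[\eta_{sn^2}(x)\eta_{sn^2}(y)]-\rho^n_s(x)\rho^n_s(y)$. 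The sharp space-correlation bound of Section \ref{s7}, which propagates Assumption \ref{assumption2} in time to give $\sup_{x<y}|\varphi^n_s(x,y)|=O(1/n)$ uniformly in $s$, yields $\bb E_{\mu_n}[(\eta_{sn^2}(x)-\eta_{sn^2}(x+1))^2]=2\chi(\rho^n_s(x))+O(1/n)$. Combining this with the convergence of the discrete profile $\rho^n_s(\cdot)$, solution of \eqref{disc_heat}, to the hydrodynamic solution $\rho(s,\cdot)$ of \eqref{hydroeq}, a Riemann-sum argument identifies the limit of the bulk part of $\int_0^t\bb E_{\mu_n}[\Gamma^n_s(\phi)]\,ds$ as $\int_0^t\int_0^1 2\chi(\rho(r,u))(\nabla\phi(u))^2\,du\,dr$. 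The two boundary expectations are computed directly, since $\bb E_{\mu_n}[\alpha+(1-2\alpha)\eta_{sn^2}(1)]=\alpha+(1-2\alpha)\rho^n_s(1)\to\alpha+(1-2\alpha)\rho(s,0)$, and similarly at the right edge, producing the remaining two terms of \eqref{36}.

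It remains to show $\mathrm{Var}_{\mu_n}\big(\int_0^t\Gamma^n_s(\phi)\,ds\big)\to0$, and this is the main obstacle. For the bulk contribution the prefactor $n^{-1}$, the spatial decorrelation quantified in Section \ref{s7}, and the fast local relaxation on the accelerated time scale $s\mapsto sn^2$ make both the diagonal and off-diagonal space terms $o(1)$. The delicate point is the two boundary terms: these are \emph{not} spatial averages, so they do not self-average by a law of large numbers in space, and their concentration must come entirely from temporal averaging. Concretely, one has to bound $\bb E_{\mu_n}\big[\big(\int_0^t(\eta_{sn^2}(1)-\rho^n_s(1))\,ds\big)^2\big]$, which I would do through a variance estimate for additive functionals (an $H_{-1}$-type bound with respect to the accelerated generator $n^2\mc L_n$); the diffusive speed-up forces this quantity to vanish, and likewise for the cross terms between the boundary and the bulk. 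Once (i) and (ii) are established, the martingale central limit theorem yields the convergence of $\{M^n_\cdot(\phi)\}_n$ to the Brownian motion $W_\cdot(\phi)$ with quadratic variation \eqref{36}, which proves the lemma.
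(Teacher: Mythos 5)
Your overall skeleton coincides with the paper's: the paper invokes \cite[Proposition 4.2]{GJ2016}, whose three hypotheses amount to a mild condition on the quadratic variation plus your conditions (i) and (ii), and your treatment of (i) (a jump alters at most two occupation variables, so every jump of $M^n_\cdot(\phi)$ is $O(n^{-1/2})$) is exactly the paper's. Your identification of the \emph{mean} of $\int_0^t\Gamma^n_s(\phi)\,ds$ is also correct and uses only estimates the paper genuinely provides: the uniform two-point correlation bound of Proposition \ref{prop3.1}, the discrete-gradient and profile estimates of Proposition \ref{disc_cont_prox}, and a Riemann-sum argument.

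The genuine gap is in the concentration step, $\mathrm{Var}_{\mu_n}\big(\int_0^t\Gamma^n_s(\phi)\,ds\big)\to0$, and it affects both of your sub-arguments. For the bulk you appeal to ``the spatial decorrelation quantified in Section \ref{s7}'', but Section \ref{s7} controls only the \emph{two-point, equal-time} function $\varphi^n_t(x,y)$; the off-diagonal part of the bulk variance involves covariances of $(\eta_{sn^2}(x)-\eta_{sn^2}(x+1))^2$ and $(\eta_{rn^2}(y)-\eta_{rn^2}(y+1))^2$ at distinct times, i.e.\ four-point, space-time correlations, and even after a Cauchy--Schwarz reduction to equal times one still needs equal-time four-point bounds, which neither the paper nor your proposal supplies. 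For the boundary terms your diagnosis (no spatial averaging, concentration must come from time averaging) is correct, but the tool you name does not apply as stated: the Kipnis--Varadhan $H_{-1}$ estimate requires stationarity, whereas here the process starts from an arbitrary $\mu_n$ satisfying Assumptions \ref{assumption0}--\ref{assumption2}, and a crude change of measure is hopeless since $d\mu_n/d\mu_{ss}$ is typically of order $e^{cn}$. Making temporal averaging rigorous out of equilibrium requires the entropy inequality combined with a Feynman--Kac variational bound --- which is precisely the content of the replacement lemma \cite[Lemma 5.7]{bmns} that the paper cites. Indeed the paper sidesteps any variance computation: by \cite[Lemma 5.7]{bmns} (which handles both the boundary occupations and the products $\eta(x)\eta(x+1)$ in the bulk) together with the hydrodynamic limit \cite[Theorem 2.7]{bmns}, the additive functional $\int_0^t\Gamma^n_s(\phi)\,ds$ converges in distribution to the deterministic quantity \eqref{36}, and convergence in distribution to a constant is automatically convergence in probability. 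To close your proof, replace the variance argument by this replacement-lemma route, or else prove out-of-equilibrium four-point and space-time correlation estimates, a substantially harder task than anything carried out in Section \ref{s7}.
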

\begin{proof}
To prove this lemma we  can apply  Proposition 4.2 of \cite{GJ2016}, which is based on  three hypotheses. In our case, the first hypothesis of that proposition is trivially verifiable by the expression of the quadratic variation. The second hypothesis can be proved as in \cite{GJ2016} by noting that when a jump occurs, the configuration $\eta$ only changes its value in at most two sites, hence the same estimate as in \cite{GJ2016} holds here. Finally, we prove that the  third hypothesis  holds.
In view of \eqref{quad var} and recalling Assumption \ref{assumption0},  we may apply   \cite[Lemma 5.7]{bmns} and \cite[Thm 2.7]{bmns} and standard arguments to conclude that
\begin{equation}\label{35}
\int_{0}^t\Gamma^n_s(\phi)ds\,,
\end{equation}
 which is an \textit{additive functional} of the exclusion process $\eta_t$,
converges in distribution, as $n\to\infty$, towards \eqref{36}. Moreover, since the expression above is deterministic, the convergence holds, in fact,  in probability.
This finishes the proof.
\end{proof}
\begin{remark}\label{remark31}\rm
We  point out that  expression \eqref{36} simply   writes as
\begin{equation*}
\int_0^t \|\nabla \phi\|_{L^2(\rho_r)}^2\,dr\,,
\end{equation*}
provided $\phi\in \mc S$ ($\phi(0)=\nabla\phi(0)$ and $\phi(1)=-\nabla\phi(1)$) for any time $r\in[0,T]$, see \eqref{norm}. 
\end{remark}

As a consequence of the previous result, for each $t$, the random variable  $W_t(\phi)$ is Gaussian with mean zero and with variance  $\int_0^t \|\nabla \phi\|_{L^2(\rho_r)}^2dr$.
\medskip

Moreover,  the random variables $W_t(f)$ and $\mc Y_0(g)$ are uncorrelated for any $f,g\in \mc S$. In fact,
$\bb E\,\big[ W_t(f) \mc Y_0(g)\big] 
=\bb E\, \Big[ \mc Y_0(g)\;\bb E\big[ W_t(f) |\mc F_0\big]\Big]=0$, since $W_0(f)=0$.
\medskip

The proof of tightness is postponed to Section \ref{s6}.
 From this point on (in this subsection), fix $t\in [0,T]$ and restrict the processes to the time interval $[0,t]$.
Choose now the particular test function 
\begin{equation}\label{test}
\phi(u,s)\;:=\;(T_{t-s}f)(u)\,,
\end{equation} where
$T_t$ is given in  Definition \ref{def2} and  $f\in \mc S$. Note that $\phi$ is well-defined for all $s\in[0,t]$ and that  $\phi\in \mc S$ in view of Corollary \ref{cor42}.
For this choice of the test function,  \eqref{int part of mart} writes as 
\begin{equation}\label{3.3}
\begin{split}
 \Lambda^n( T_{t-s}f)   \;=\; & \mc Y_s^n\Big(\Delta_n T_{t-s}f  - \Delta T_{t-s}f \Big)  + \mc Y_s^n\Big(\Delta T_{t-s}f + \p_s T_{t-s}f \Big)  \\
&+ \sqrt n\,\Big( \nabla_n^+(T_{t-s}f)(0) -(T_{t-s}f)\big(\pfrac{1}{n}\big)\Big)\cdot\big(\eta_{sn^2}(1)-\rho_s^n(1)\big)\\
&+\sqrt n\,\Big( \nabla_n^-(T_{t-s}f)(1)-(T_{t-s}f)\big(\pfrac{n-1}{n}\big)\Big)\cdot \big(\eta_{sn^2}(n-1)-\rho_s^n(n-1)\big)\,.\\
\end{split}
\end{equation}

We claim that $\Lambda^n(T_{t-s}f)$ goes to zero as $n\to\infty$.
Let us examine the four terms at the right hand side of \eqref{3.3}.
By Proposition \ref{prop23}, we know that $T_{t-s}f$ is smooth, hence
$\Delta_n T_{t-s}f  - \Delta T_{t-s}f$ is of order $O(n^{-2})$. The second term at the right hand side of \eqref{3.3} is identically zero, since $\p_s T_{t-s}f=-\Delta T_{t-s}f$. The third and fourth terms at the right hand side of \eqref{3.3} go to zero as $n\to\infty$, because $T_{t-s}f$ satisfies the boundary conditions  \eqref{b1} and \eqref{b2}, respectively. This proves  the claim, which  implies also that $\int_0^t \Lambda_n(T_{t-s}f)\,ds$ goes to zero. In other words, by choosing   \eqref{test}, the integral term in \eqref{martingaleM} vanishes in the limit as $n\to\infty$.

Let us now look at  \eqref{martingaleM} for a fixed time $t\in[0,T]$ and for the choice \eqref{test}. As a consequence of the previous results, together with tightness which is proved in Section~\ref{s6}, any limit point of the sequence  $\{\mc Y_t^n(f)\}_{n\in\bb N}$   must be of the form
\begin{equation}\label{eq38}
\mc Y_t(f)  \;=\;   \mc Y_0(T_t f)+ W_t(f)\,,
\end{equation}
where $\mc Y_0(T_t f)$ and $W_t(f)$ are uncorrelated and $W_t(f)$ is a mean zero Gaussian variable of variance given by  \eqref{eq212}. This finishes the proof of Theorem~\ref{non_eq_flu}.

\section{Proof of Theorem \ref{thm27}}\label{s4}

In this section we start by showing the uniqueness of the Ornstein-Uhlenbeck process solution of \eqref{O.U.} by a martingale problem and then we prove the Theorem \ref{thm27}.
\subsection{Uniqueness of the Ornstein-Uhlenbeck process}\label{sub42}

\begin{proposition}\label{pp1}
There exists an unique random element $\mc Y$ taking values in the space $\mc C([0,T],\mathcal{S}')$ such
that:
\begin{itemize}
\item[(i)] For every function $f \in \mathcal{S}$, 
\begin{align}
& W_t(f)\;:=\; \mc Y_t(f) -\mc Y_0(f) -  \int_0^t \mc Y_s(\Delta f)ds\,,\label{martM}\\
& N_t(f)\;:=\;\big( W_t(f)\big)^2 - \int_0^t \|\nabla f\|_{L^2(\rho_r)}^2\,dr\label{martN}
\end{align}
are martingales with respect to the  filtration $\mc F_t:=\sigma(\mc Y_s(g); s\leq t,  g \in \mathcal{S})$.
\item[(ii)] $\mc Y_0$ is a Gaussian field of mean zero and covariance given on $f,g\in\mathcal{S}$ by
\begin{equation}\label{eq:covar1}
\mathbb{E}\big[ \mc Y_0(f) \mc Y_0(g)\big] \;=\;  \sigma(f,g)\,,
\end{equation}
where $\sigma$ was defined in \eqref{covar}.
\end{itemize}
Under the conditions above we have that:  for each $f\in\mc S$, the  process $\{\mc Y_t(f)\,;\,t\geq 0\}$ is  Gaussian. Moreover, for $s<t$  the distribution of $\Y_t(f)$  conditionally to
$\mc F_s$ is normal  of mean $\Y_s(T_{t-s}f)$ and variance $\int_s^{t}\Vert \nabla T_{t-r}
f\Vert^2_{L^2(\rho_r)}dr$.
\end{proposition}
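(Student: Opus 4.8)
The plan is to pin down the law of $\mc Y$ through its finite-dimensional distributions, which I will compute by an exponential (Fourier) martingale argument; uniqueness of those distributions then forces uniqueness of the law on $\mc C([0,T],\mc S')$, and their explicit Gaussian form yields both the global Gaussianity and the asserted conditional normality. The engine of the whole argument is that, for fixed $t$ and $f\in\mc S$, the time-dependent test function $s\mapsto T_{t-s}f$ turns the martingale in (i) into a driftless one.

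First I would upgrade the martingale relation (i), stated for fixed test functions, to time-dependent ones. For a family $s\mapsto\phi_s\in\mc S$ that is differentiable in $s$, a standard telescoping/Dynkin argument — splitting each increment $\mc Y_{t_{i+1}}(\phi_{t_{i+1}})-\mc Y_{t_i}(\phi_{t_i})$ into a part handled by (i) at the frozen function $\phi_{t_{i+1}}$ and a part measuring the time-variation of $\phi$, then passing to the limit along a refining partition — shows that
\begin{equation*}
\mc Y_t(\phi_t)-\mc Y_0(\phi_0)-\int_0^t \mc Y_s\big(\partial_s\phi_s+\Delta\phi_s\big)\,ds
\end{equation*}
is an $\mc F_t$-martingale. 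Here I use that $T_{t-s}f\in\mc S$ with $\Delta T_{t-s}f\in\mc S$ (Corollary \ref{cor42}) and that $T_{t-s}f$ is smooth in $s$ (Proposition \ref{prop23}). Choosing $\phi_s=T_{t-s}f$ makes $\partial_s\phi_s+\Delta\phi_s=0$, so that
\begin{equation*}
M^{t,f}_s\;:=\;\mc Y_s(T_{t-s}f)-\mc Y_0(T_tf),\qquad s\in[0,t],
\end{equation*}
is a continuous (since $\mc Y\in\mc C([0,T],\mc S')$) mean-zero martingale. Running the same argument on the quadratic martingale $N_\cdot(f)$ of (i) — equivalently, computing the predictable bracket of $M^{t,f}$ — identifies $\langle M^{t,f}\rangle_s=\int_0^s\|\nabla T_{t-r}f\|^2_{L^2(\rho_r)}\,dr$, a deterministic function of $s$.

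With a continuous martingale whose bracket is deterministic in hand, I would conclude via Itô's formula that, for every $\theta\in\bb R$,
\begin{equation*}
\exp\Big\{i\theta M^{t,f}_s+\tfrac{\theta^2}{2}\int_0^s\|\nabla T_{t-r}f\|^2_{L^2(\rho_r)}\,dr\Big\}
\end{equation*}
is a bounded, hence genuine, complex martingale. Conditioning on $\mc F_s$ gives
\begin{equation*}
\bb E\big[e^{i\theta(\mc Y_t(f)-\mc Y_s(T_{t-s}f))}\mid\mc F_s\big]=\exp\Big\{-\tfrac{\theta^2}{2}\int_s^t\|\nabla T_{t-r}f\|^2_{L^2(\rho_r)}\,dr\Big\},
\end{equation*}
which is precisely the statement that, conditionally on $\mc F_s$, $\mc Y_t(f)$ is normal with mean $\mc Y_s(T_{t-s}f)$ and variance $\int_s^t\|\nabla T_{t-r}f\|^2_{L^2(\rho_r)}\,dr$. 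Iterating this identity backward across $0\le t_1<\dots<t_k$, and using that $\mc Y_0$ is the mean-zero Gaussian field of covariance $\sigma$ from (ii), I obtain a closed expression for every joint characteristic function $\bb E[\exp(i\sum_j\theta_j\mc Y_{t_j}(f_j))]$ purely in terms of $\sigma$, the semigroup $T$, and the bracket functionals; in particular $\{\mc Y_t(f)\}_t$ is Gaussian. Since these finite-dimensional distributions are thereby uniquely determined, and a law on $\mc C([0,T],\mc S')$ is fixed by the finite-dimensional distributions of the real variables $\mc Y_t(f)$ (nuclearity/separability of $\mc S$), uniqueness follows. Existence is obtained by constructing the mean-zero Gaussian process with covariance prescribed by \eqref{covariance non eq limit field} and verifying (i)--(ii) directly, or alternatively by noting that any limit point provided by the tightness in Theorem \ref{non_eq_flu} solves this martingale problem.

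I expect the main obstacle to be the rigorous extension of the martingale problem to the time-dependent test function $T_{t-s}f$: one must control the telescoping error terms using the joint regularity of $(s,u)\mapsto T_{t-s}f(u)$ together with the continuity of $\mc Y_s$ on $\mc S$ uniformly in $s$, and simultaneously track the analogous approximation for the bracket so that the deterministic limit $\int_0^s\|\nabla T_{t-r}f\|^2_{L^2(\rho_r)}\,dr$ is correctly identified. Once $M^{t,f}$ and its deterministic bracket are in place, the exponential-martingale computation and the backward iteration are routine.
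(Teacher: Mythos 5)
Your overall strategy is the same as the paper's: this is the Holley--Stroock method, hinging on the backward-semigroup test function $s\mapsto T_{t-s}f$, an exponential (characteristic-function) martingale, the resulting conditional Gaussian law of $\Y_t(f)$ given $\mc F_s$, and backward iteration combined with hypothesis (ii) to pin down the finite-dimensional distributions and hence the law on $\mc C([0,T],\mc S')$. The difference is the order of operations, and that is where your write-up has a genuine gap. You linearize first: you telescope the \emph{linear} martingale problem to produce $M^{t,f}_s=\Y_s(T_{t-s}f)-\Y_0(T_tf)$ together with its bracket, and only then exponentiate via It\^o. The paper exponentiates first --- It\^o applied to \eqref{martM}--\eqref{martN} for \emph{frozen} $f$ yields the complex martingales $X^s_t(f)$ --- and then telescopes the product $\prod_{j} X_{s_{j+1}}^{s_j}(T_{S-s_j}f)$ along a refining partition.

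The gap is in the step you yourself flag as the main obstacle, but the real issue is not the one you propose to handle. Almost-sure convergence of your telescoping sums does not make the limit $M^{t,f}$ a martingale: the martingale property survives a limit only under $L^1$-convergence (uniform integrability) of the approximants, so you must control in $L^1$ the error terms $\Y_{s_{j}}\big(T_{S-s_{j-1}}f-T_{S-s_j}f\big)$ and the Riemann-sum discrepancies, not merely pathwise. The martingale problem (i)--(ii) gives integrability only of the specific combinations $W_t(f)$ and $\big(W_t(f)\big)^2$ for frozen $f$; it provides no a priori bound on $\bb E\,|\Y_s(g)|$, so the joint regularity of $T_{t-s}f$ (Lemma \ref{lemma44}) and the a.s. continuity of $s\mapsto \Y_s$ that you invoke control the errors almost surely but not in expectation. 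The same difficulty recurs, worse, when you claim to identify the bracket of $M^{t,f}$ from $N_\cdot(f)$: the cross terms there need second moments of $\Y_s(g)$, which are likewise unavailable. This is exactly what the paper's ordering is designed to avoid: the telescoped factors $X_{s_{j+1}}^{s_j}(T_{S-s_j}f)$ are complex exponentials whose real exponent is deterministic and bounded on $[0,T]$, so their product is uniformly bounded and dominated convergence upgrades a.s. convergence to $L^1$, letting the martingale property pass to the limit with no moment estimates on $\Y$ whatsoever. To repair your route you would need an additional argument --- for instance a localization by stopping times showing $M^{t,f}$ is a local martingale whose (pathwise) quadratic variation is the stated deterministic function, whence it is a true $L^2$-martingale --- or simply adopt the paper's bounded-exponential telescoping.
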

Before proving the proposition we make some comments. The existence of the random element $\mc Y$  is a consequence of tightness, which is proved in Section \ref{s6}. The fact that \eqref{martM} and \eqref{martN} are martingales motivates us to call  the random element $\mc Y$  the \textit{formal} solution of \eqref{O.U.}. From this formal equation \eqref{O.U.} the random element $\mc Y$ coins the name 
\textit{generalized Ornstein-Uhlenbeck}. We strongly emphasize that equation \eqref{O.U.} is solely formal and that the norm $\|\cdot\|_{L^2(\rho_r)}$ plays a role in the definition of this generalized Ornstein-Uhlenbeck process.
\medskip

The next lemma is the key in the proof of Proposition \ref{pp1}.
\begin{lemma}\label{lemma44} For any $f\in\mc S$,
$T_{t+\eps} f-T_t f=\eps\, \Delta T_t H+o(\eps,t)$, where  $o(\eps,t)$ denotes a function in $\mc S$ such that
$\lim_{\eps \searrow 0}  \frac{o(\eps,t)}{\eps} \;=\;0
$
holds in the topology of $\mc S$. Moreover,  the limit is uniform in compact time intervals. 
\end{lemma}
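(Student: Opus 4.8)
The plan is to exploit the explicit spectral representation of the semigroup established in Proposition~\ref{prop23}, namely $T_t f = \sum_{n=1}^\infty a_n e^{-\lambda_n t}\Psi_n$, and to differentiate this series in time. Writing $f = \sum_n a_n \Psi_n$ with $\lambda_n \sim n^2\pi^2$, I first observe that for fixed $t>0$ the coefficients $a_n e^{-\lambda_n t}$ decay super-exponentially, so the series and all of its term-by-term spatial and temporal derivatives converge uniformly together with every $\|\cdot\|_k$-seminorm. Differentiating in $t$ gives $\partial_t T_t f = \sum_n a_n(-\lambda_n)e^{-\lambda_n t}\Psi_n = \Delta T_t f$, using $\Delta \Psi_n = -\lambda_n \Psi_n$ from \eqref{eq34}. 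This is the natural candidate for the first-order term.

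The key step is to upgrade this termwise time-derivative statement into the claimed finite-difference expansion $T_{t+\eps}f - T_t f = \eps\,\Delta T_t f + o(\eps,t)$ \emph{in the topology of $\mc S$}, i.e.\ simultaneously in every seminorm $\|\cdot\|_k$. I would define the remainder $o(\eps,t) := T_{t+\eps}f - T_t f - \eps\,\Delta T_t f$ and, working at the level of Fourier coefficients, write its $n$-th coefficient as $a_n e^{-\lambda_n t}\bigl(e^{-\lambda_n \eps} - 1 + \lambda_n \eps\bigr)$. A second-order Taylor estimate gives $|e^{-\lambda_n\eps}-1+\lambda_n\eps| \le \tfrac{1}{2}\lambda_n^2\eps^2$, so the $n$-th coefficient of $o(\eps,t)/\eps$ is bounded by $\tfrac{1}{2}|a_n|\,\lambda_n^2 e^{-\lambda_n t}\eps$. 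To pass from coefficients to the seminorm $\|o(\eps,t)/\eps\|_k$, I apply $\p_u^k$ to each $\Psi_n$; since $\Psi_n$ is a bounded combination of $\sin(\sqrt{\lambda_n}\,u)$ and $\sqrt{\lambda_n}\cos(\sqrt{\lambda_n}\,u)$ from \eqref{eigen42}, each $k$-th spatial derivative contributes an extra polynomial factor in $\sqrt{\lambda_n}$, controlled by $A_n$ (which is itself bounded uniformly in $n$). Summing over $n$, the factor $e^{-\lambda_n t}$ with $\lambda_n \sim n^2\pi^2$ dominates any polynomial growth, so for each fixed $t>0$ the series $\sum_n |a_n|\,\lambda_n^2\,(1+\sqrt{\lambda_n})^{k}\,e^{-\lambda_n t}$ converges, yielding $\|o(\eps,t)/\eps\|_k \le C_{k,t}\,\eps \to 0$.

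For the uniformity on compact time intervals $[t_0,T]$ with $t_0>0$, I note that all the above bounds use only the lower bound $e^{-\lambda_n t}\le e^{-\lambda_n t_0}$, so the convergence constants $C_{k,t}$ can be replaced by a single $C_{k,t_0}$ independent of $t\in[t_0,T]$; this is exactly where the separation of $t$ from $0$ is used and where the fast decay of Proposition~\ref{prop23} pays off. The main obstacle is purely a matter of bookkeeping rather than depth: I must verify that the $k$-th spatial derivatives of the eigenfunctions grow only polynomially in $\sqrt{\lambda_n}$ and that the normalizing constants $A_n$ stay bounded, so that the Gaussian-type decay $e^{-\lambda_n t}$ genuinely absorbs all polynomial factors uniformly in $k$ for each fixed $k$. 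Once this growth-versus-decay balance is in hand, the estimate $\|o(\eps,t)/\eps\|_k = O(\eps)$ follows by dominated convergence on the coefficient series, and the lemma is complete. The reason I record $\Delta T_t H$ rather than $\Delta T_t f$ in the statement appears to be a typo for $f$, and I would write the proof with $f$ throughout.
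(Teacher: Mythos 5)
Your route --- expand in the eigenbasis $\{\Psi_n\}_{n\in\bb N}$, write the $n$-th coefficient of the remainder as $a_n e^{-\lambda_n t}\big(e^{-\lambda_n\eps}-1+\lambda_n\eps\big)$, bound it by $\tfrac12|a_n|\lambda_n^2\eps^2$, and pass to the seminorms $\|\cdot\|_k$ through the polynomial growth of $\p_u^k\Psi_n$ in $\sqrt{\lambda_n}$ --- is precisely the ``direct consequence of the explicit formula \eqref{eq6semi}'' that the paper invokes (the paper omits all details), and your reading of $\Delta T_t H$ as a typo for $\Delta T_t f$ is correct. However, there is a genuine gap in how you obtain uniformity. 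You let the factor $e^{-\lambda_n t}$, $t\geq t_0>0$, absorb the polynomial factors $\lambda_n^2(1+\sqrt{\lambda_n})^k$, and you say explicitly that this is where the separation of $t$ from $0$ is used. This proves a strictly weaker statement: uniformity on $[t_0,T]$ with $t_0>0$, with constants $C_{k,t_0}$ that blow up as $t_0\searrow 0$. But the lemma's ``compact time intervals'' must include intervals containing $t=0$: in the one place the lemma is used (the proof of Proposition \ref{pp1}), one applies it to $T_{S-s_{j-1}}f-T_{S-s_j}f$ along a partition of $[t_1,t_2]$, and in the case $t_2=S$ --- exactly the case needed there, since only then does $g=T_{S-t}f$ recover an arbitrary element of $\mc S$ --- the times $S-s_j$ decrease to order $\eps=(t_2-t_1)/n$ as the mesh shrinks. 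Your bound then reads $C_{k,\,S-s_j}\,\eps$ with $S-s_j\approx\eps$, which gives nothing.

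The missing idea is that you never use the hypothesis $f\in\mc S$ beyond $f\in L^2[0,1]$; it is this hypothesis, not the smoothing of the semigroup, that makes the estimate uniform up to $t=0$. Integrating by parts twice and using the boundary conditions of Definition \ref{def1} for $f$ together with \eqref{eq35}--\eqref{eq36} for $\Psi_n$, the boundary terms cancel and one gets $\<\Delta f,\Psi_n\>=-\lambda_n\<f,\Psi_n\>$; since $\Delta^m f\in\mc S$ for every $m$ (the conditions defining $\mc S$ at order $k$ for $\Delta f$ are those at order $k+1$ for $f$), iteration gives $|a_n|\leq \lambda_n^{-m}\|\Delta^m f\|_{L^2[0,1]}$ for all $m$, i.e.\ super-polynomial decay of the Fourier coefficients. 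Then $\sum_{n}|a_n|\,\lambda_n^2\,(1+\sqrt{\lambda_n})^k<\infty$ with no help from $e^{-\lambda_n t}$, and your estimate becomes $\|o(\eps,t)/\eps\|_k\leq C_k\,\eps$ with $C_k$ independent of $t\geq 0$. With this replacement (and the observation, immediate from Corollary \ref{cor42} and the closedness of $\mc S$ under $\Delta$, that the remainder lies in $\mc S$), your argument is complete and yields uniformity on all compact subsets of $[0,\infty)$, as the lemma and its application require.
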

\begin{proof}
The proof is a direct consequence of the explicit formula \eqref{eq6semi}. Notice that the inclusion $o(\eps,t)\in \mc S$ is immediate from Corollary \ref{cor42}. Details are omitted here.
\end{proof}
\begin{proof}[Proof of Proposition \ref{pp1}]
 The structure of proof is the same of  \cite[page 307]{kl}. 
 By \eqref{martM} and \eqref{martN}, we have that
 \begin{equation*}
 W_t(f)\cdot\Bigg(\frac{1}{t}\int_0^t \| \nabla f\|_{L^2(\rho_r)}dr\Bigg)^{-\frac{1}{2}}
 \end{equation*}
 is a standard Brownian motion.
 
 Fix $f\in\mc S$ and $s>0$.
 By It\^o's Formula (see \cite[Thm. 3.3 and Cor. 3.3]{ry}) and the previous comment,  
 the process $\{X_t^s(f)\,;\,t\geq s\}$ defined by
\begin{equation*}
 X_t^s(f)\;=\;\exp\Bigg\{\frac{1}{2}\int_s^t\|\nabla f\|_{L^2(\rho_r)}^2dr+ i\,\Big( \Y_t(f) - \mc Y_s(f) -\int_s^t\Y_r(\Delta 
f)\,dr\Big)\Bigg\}
\end{equation*}
is a (complex) martingale. Fix $S>0$. We claim now that the process $\{Z_t\,;\, 0\leq t\leq S\}$  defined by
\begin{equation*}
 Z_t\;=\;\exp\Bigg\{ \frac{1}{2}\int_0^t\Vert \nabla T_{S-r} f\Vert^2_{L^2(\rho_r)}\,dr +i\,\Y_t(T_{S-t} f)\Bigg\}
\end{equation*}
is also a complex martingale. To prove this claim, consider two times $0\leq t_1<t_2\leq S$ and a partition of the interval $[t_1,t_2]$ in $n$
intervals of equal size, that is, $t_1=s_0<s_1<\cdots<s_n=t_2\,,$
with $s_{j+1}- s_j=(t_2-t_1)/n$. Observe  that
\begin{equation*}
\begin{split}
\prod_{j=0}^{n-1} X_{s_{j+1}}^{s_j}(T_{S-s_j}f)\;=\;&\exp\Bigg\{  \sum_{j=0}^{n-1}\,\frac{1}{2}\int_{s_j}^{s_{j+1}}
\Vert \nabla T_{S-s_j}f\Vert^2_{L^2(\rho_r)}dr\\
& +i\,\sum_{j=0}^{n-1}
\Big( \Y_{s_{j+1}}(T_{S-s_j}f) - \Y_{s_j}(T_{S-s_j}f) -\int_{s_j}^{s_{j+1}}\Y_r(\Delta
T_{S-s_j}f)\,dr\Big)\Bigg\}\,.\\
\end{split}
\end{equation*}
As $n\to +\infty$, the first sum inside the exponential above converges to
\begin{equation*}
\frac{1}{2}\int_{t_1}^{t_2}\Vert \nabla T_{S-r}f\Vert^2_{L^2(\rho_r)} \,dr\,,
\end{equation*}
due to  the smoothness of the semigroup $T_t$. The second sum inside the exponential can be rewritten as
\begin{equation*}
 \Y_{t_2}(T_{S-t_2+\frac{1}{n}}f)-\Y_{t_1}(T_{S-t_1}f)+ \sum_{j=1}^{n-1}
\Bigg( \Y_{s_{j}}(T_{S-s_{j-1}}f-T_{S-s_j}f) -\int_{s_j}^{s_{j+1}}\!\!\Y_r(\Delta
T_{S-s_j}f)\,dr\Bigg)\,.
\end{equation*}
Using the fact that $\Y\in \mc C([0,T],\mc S')$ and Lemma \ref{lemma44},
we obtain that the  previous expression converges almost surely to 
$\Y_{t_2}(T_{S-t_2}f)-\Y_{t_1}(T_{S-t_1}f)\,.$
Hence we have proved that
\begin{equation*}
 \lim_{n\to {+\infty}} \prod_{j=0}^{n-1} X_{s_{j+1}}^{s_j}(T_{S-s_j}f)\,=\,\exp\Bigg\{
 \frac{1}{2}\!\int_{t_1}^{t_2}\!\!\!\!\Vert \nabla T_{S-r}f\Vert^2_{L^2(\rho_r)} dr+
 i\Big(\Y_{t_2}(T_{S-t_2}f)-\Y_{t_1}(T_{S-t_1}f)\Big)\Bigg\},
\end{equation*}
 which is equal to $\frac{ Z_{t_2}}{Z_{t_1}}$ almost surely. Since the complex exponential is bounded, the Dominated Convergence
Theorem gives additionally  the $L^1$
convergence, which  implies
\begin{equation*}
 \bb E\,\Big[G\,\frac{Z_{t_2}}{Z_{t_1}} \Big] \;=\;\lim_{n\to {+\infty}} \bb E\,\Big[G\,\prod_{j=0}^{n-1}
X_{s_{j+1}}^{s_j}(T_{S-s_j}f) \Big]\,,
\end{equation*}
for any measurable bounded function $G$. Take $G$ bounded and  $\mc F_{t_1}$-measurable. Since for any $f\in \mc
S$ the process $X_t^s(f)$ is a martingale, we take the
conditional expectation with respect to
$\mc F_{s_{n-1}}$, and we are lead to 
\begin{equation*}
 \bb E\,\Big[G\,\prod_{j=0}^{n-1}
X_{s_{j+1}}^{s_j}(T_{S-s_j}f) \Big]\;=\;\bb E\,\Big[G\,\prod_{j=0}^{n-2}
X_{s_{j+1}}^{s_j}(T_{S-s_j}f) \Big]\,.
\end{equation*}
By induction, we conclude that
\begin{equation*}
  \bb E\,\Big[G\,\frac{Z_{t_2}}{Z_{t_1}} \Big] \;=\; \bb E\,\Big[G \Big]\,,
\end{equation*}
for any $G$ bounded and $\mc F_{t_1}\!$-measurable, proving that $\{Z_t\,;\,t\geq 0\}$ is, in fact,  a martingale. From
$\bb E\,[Z_{t} | \mc F_s] = Z_{s}$, we get
\begin{equation*}
\begin{split}
 &\bb E\,\Big[\exp\Big\{ \frac{1}{2}\int_0^t\Vert \nabla T_{S-r} f\Vert^2_{L^2(\rho_r)}dr
+i\,\Y_t(T_{S-t} f)\Big\}\Big\vert \mc F_s\Big]\\
&=\; \exp\Bigg\{ \frac{1}{2}\int_0^s\Vert \nabla T_{S-r} f\Vert^2_{L^2(\rho_r)}dr +i\,\Y_s(T_{S-s} f)\Bigg\}\,,
\end{split}
\end{equation*}
which in turn gives
\begin{equation*}
\bb E\,\Big[\exp\Big\{ i\,\Y_t(T_{S-t}
f)\Big\}\big\vert \mc F_s\Big] \;=\; \exp\Bigg\{ -\frac{1}{2}\int_s^t\Vert \nabla T_{S-r} f\Vert^2_{L^2(\rho_r)}dr
+i\,\Y_s(T_{S-s} f)\Bigg\}\,.
\end{equation*}
Note that $T_{S-s} f=T_{t-s} T_{S-t} f$. Thus, writing $g=T_{S-t} f$ we get
\begin{equation*}
\bb E\,\Big[\exp\Big\{ i\,\Y_t(
g)\Big\}\big\vert \mc F_s\Big] \;=\; \exp\Bigg\{ -\frac{1}{2}\int_s^t\Vert \nabla T_{t-r} g\Vert^2_{L^2(\rho_r)}dr
+i\,\Y_s(T_{t-s} g)\Bigg\}\,.
\end{equation*}
Replacing back $g$ by $\lambda f$, where $\lambda\in \bb R$, we obtain
\begin{equation*}
\bb E\,\Big[\exp\Big\{ i\,\lambda \,\Y_t(
f)\Big\}\big\vert \mc F_s\Big] \;=\; \exp\Bigg\{ -\frac{\lambda^2}{2}\int_s^t\Vert \nabla T_{t-r} f\Vert^2_{L^2(\rho_r)}dr
+i\,\lambda\,\Y_s(T_{t-s} f)\Bigg\}\,,
\end{equation*}
which means that, conditionally to $\mc F_s$, the random variable $\Y_t(f)$ has  Gaussian
distribution of mean $\Y_s(T_{t-s}f)$ and variance $\int_s^{t}\Vert \nabla T_{t-r} f\Vert^2_{L^2(\rho_r)}dr$.
Since the distribution at time zero is  determined by \eqref{eq:covar1}, by successively conditioning  we get the uniqueness of the finite dimensional distributions of  the process $\{\Y_t(f)\,;\,t\in{[0,T]}\}$,
 which assures uniqueness in law of the random element $\Y$.
\end{proof}

\subsection{Characterization of limit points} We prove here that any limit point of $\{Q_n\}_{ n\in \mathbb{N}}$  is concentrated on solutions of \eqref{O.U.}, i.e., the limit satisfies (i) and (ii) of Proposition~\ref{pp1}. Fix a test function $f\in \mc S$ (note that $f$ does not depend on time) and let us look at the martingale \eqref{martingaleM} taking $\phi=f$.   Lemma~\ref{lemma32} guarantees the convergence of the martingale $M^n_t(f)$ towards a Brownian motion $W_t(f)$, whose quadratic variation is given by $\int_0^t \|\nabla f\|_{L^2(\rho_r)}^2\,dr$, see the Remark \ref{remark31}. By the hypothesis of the Theorem \ref{thm27}, $\{\mc Y_0^n\}_{n\in\bb N}$ converges, as $n\to\infty$,  to a mean-zero Gaussian field  $\mc Y_0$ with covariance given by \eqref{covar}. Thus $\{\mc Y_0^n(f)\}_{n\in\bb N}$ converges, as $n\to\infty$, to $\mc Y_0(f)$ as well. By tightness proved in Section \ref{s6}, we can pick a subsequence of $\bb N$ such that $\{\mc Y^n_t;t\in [0,T]\}_{n\in \bb N}$  is convergent in the Skorohod topology of $\mc D([0,T],\mc S')$ as $n\to\infty$. Therefore,  $\{\mc Y^n_t(f);t\in [0,T]\}_{n\in \bb N}$ also converges in the Skorohod topology of $\mc D([0,T],\bb R)$, as $n\to\infty$. By abuse of  notation, we denote this subsequence by $n$. Let us look to the integral part of the martingale $M^n_t(f)$. Due to \eqref{int part of mart}, 
\begin{equation*}
\int_0^tn^2\mc L_n \mc Y^n_s(f)\,ds\;=\; \int_0^t \Big\{\mc Y^n_s(\Delta f) + \mc R^n_s(f)\Big\}\,ds\,,
\end{equation*}
where 
\begin{equation*}
\begin{split}
 \mc R^n_s(f)   \;=\; & \mc Y_s^n\Big(\Delta_n f  - \Delta f \Big)  + \sqrt n\,\Big( \nabla_n^+f(0) -f\big(\pfrac{1}{n}\big)\Big)\cdot\big(\eta_{sn^2}(1)-\rho_s^n(1)\big)\\
&+\sqrt n\,\Big( \nabla_n^-f(1)+f\big(\pfrac{n-1}{n}\big)\Big)\cdot \big(\eta_{sn^2}(n-1)-\rho_s^n(n-1)\big)\,.\\
\end{split}
\end{equation*}
Since $f\in \mc S$, it follows that
\begin{equation*}
\lim_{n\to \infty}\mc R^n_s(f)\;=\; 0\,.
\end{equation*}
 On the other hand, by Corollary \ref{cor42} we known that  $\Delta f\in \mc S$, which together with the convergence of $\mc Y^n_t$ gives us that
\begin{equation*}
 \lim_{n\to \infty}\int_0^t \mc Y^n_s(\Delta f)\,ds\;=\;\int_0^t \mc Y_s(\Delta f)\,ds\,,
\end{equation*}
so that
\begin{equation*}
W_t(f)\;=\; \mc Y_t(f) -\mc Y_0(f) -  \int_0^t \mc Y_s(\Delta f)\,ds\,,
\end{equation*}
concluding the characterization of limit points.

\begin{proof}[Proof of Theorem \ref{thm27}] The convergence follows from
 Proposition \ref{pp1}, the previous characterization of limit points and tightness proved in  Section \ref{s6}. It remains only  to prove that
the covariance is as given in  \eqref{covariance non eq limit field}. By \eqref{characterization} we have that 
 \eqref{covariance non eq limit field} is a consequence of  the fact that $W_t$ is Gaussian of variance \eqref{eq212} and that  $\mc Y_0$ and $W_t$ are uncorrelated. This finishes the proof.
\end{proof}

\section{Proof of Theorem \ref{flustat}}\label{s5}
First we make an observation about  the sequence $\rho_{ss}^n(x)$. Consider the  stationary solution of \eqref{hydroeq} denoted by $\overline{\rho}:[0,1]\to[0,1]$. By \cite[Theorem 2.2]{bmns}  $\overline{\rho}(\cdot)$ is given by
$
\overline{\rho}(u)= a\,u\,+\,b$, 
for all $u\in [0,1]$, where
 $a=
\frac{\beta - \alpha}{3}$,  and $  b=
 \alpha +\frac{\beta-\alpha}{3}$.
By \cite[Lemma 3.1]{bmns}, we have 
$\big|\rho_{ss}^n(x)-\overline{\rho}(\pfrac{x}{n})\big|\leq \frac{C}{n}$, for all  $x\in \Sigma_n$, where $C$ does not depend on $x$. 
As we extended $\rho^n_{ss}(\cdot)$ to $0$ and $n$ as $\rho^n_{ss}(0)=\alpha$ and $\rho^n_{ss}(n)=\beta$, this convergence is not true at $x=0$ and $x=n$. But it is not a problem in this paper, here it is enough to have the convergence in $(0,1)$. Moreover, if we needed  the convergence in the whole interval $[0,1]$, we just had to consider the   extension  of $\rho^n_{ss}{(\cdot)}$ to $0$ and $n$ as $\rho^n_{ss}(0)=b_n$ and $\rho^n(n)_{ss}=a_nn+b_n$.
From the previous considerations the Assumption \ref{assumption1} is trivially satisfied when  $\mu_n$ coincides with the stationary measure $\mu_{ss}$. Moreover,  from    \cite[Lemma 3.2]{bmns} the Assumption \ref{assumption2} is also valid in this case. From Theorem \ref{non_eq_flu} we know that the sequence $\{\mc Y_n\}_{n\in\bb N}$ is tight, all limits points satisfy  \eqref{characterization} and $W_t(f)$ is a mean zero Gaussian variable of variance   given by
\begin{equation*}
\begin{split}
&\int_{0}^t \int_0^12\chi(\bar{\rho}(u))(\nabla T_sf(u))^2\,du\,ds\\
&+\int_{0}^t\,\big[\alpha-(1-2\alpha)\bar{\rho}(0)\big](\nabla T_sf(0))^2+ \big[\beta-(1-2\beta)\bar{\rho}(1)\big](\nabla T_sf(1))^2\; ds.\\
\end{split}
\end{equation*}
By Corollary \ref{cor43} we have that $\lim_{t\to\infty}T_tf=0$ in the $L^2{[0,1]}$ norm. Now we analyze the limit of the variance of $W_t(f)$. For that purpose, we take  $\mathcal{Y}_t$ a solution of \eqref{O.U.}, whose covariance is given by \eqref{covariance non eq limit field}.
We want first to compute the asymptotic behavior of this covariance and we claim that it converges to \eqref{stat convariance}. To prove the claim, we note that  by the polarization identity it is enough to analyze the variance. For this purpose, fix $f\in{\mc S}$ and take $g=f$ and $s=t$ in \eqref{covariance non eq limit field} to have that:
\begin{equation*}
 E[(\mathcal{Y}_t(f))^2]\;=\;\sigma(T_tf,T_tf)+\int_0^t\<\nabla T_{t-r} f, \nabla T_{t-r}f\>_{L^2(\rho_r)}dr\,,
\end{equation*}
where  $T_t$ is given in  Definition \ref{def2}. By Corollary \ref{cor421},   $T_tf$ vanishes as $t\rightarrow{+\infty}$, hence the first term at the right hand side of the previous expression  converges to zero, as $t\to\infty$.

Now we analyze the remaining term  that we denote by $R_t(f)$ which,  by \eqref{norm}, is given by
\begin{align}
&\int_{0}^t\int_0^12\chi(\rho(r,u))(\nabla T_{t-r} f(u))^2\,du \,dr\label{51}\\
+&\int_{0}^t\big(\alpha-(1-2\alpha)\rho(r,0)\big)(\nabla T_{t-r} f(0))^2\,dr\label{52}\\
+&\int_{0}^t \big(\beta-(1-2\beta)\rho(r,1)\big)(\nabla T_{t-r} f(1))^2\,dr.\label{53}
\end{align}
We start by dealing with the first term above. Performing an integration by parts in space we can rewrite \eqref{51}  as:
\begin{equation*}
\begin{split}
&\int_0^t2\chi(\rho(r,u))\nabla T_{t-r}f(u) T_{t-r}f(u)\big|_{u=0}^{u=1}\;dr\\
-&\int_0^t\int_0^12\nabla\Big(\chi(\rho(r,u))\nabla T_{t-r}f(u)\Big)\;T_{t-r}f(u)\;du\; dr\,.
\end{split}
\end{equation*}
Since $T_t $ is the semigroup associated to the Laplacian operator of Definition \ref{def2},   then  $\partial_u T_{t-r}f(0)= T_{t-r}f(0)$ and $\partial_u T_{t-r}f(1)= -T_{t-r}f(1)$. As a consequence,  the first term in last expression is equal to
\begin{equation}\label{54}
-\int_0^t\Big(2\chi(\rho(r,1)) (T_{t-r}f(1))^2 +2\chi(\rho(r,0)) (T_{t-r}f(0))^2\Big)\,dr\,,
\end{equation}
 while the second term is equal to
\begin{equation*}
\begin{split}
&-\int_0^t\int_0^12\nabla \chi(\rho(r,u))\;\nabla T_{t-r}f(u)\; T_{t-r}f(u)\; du\; dr \\
&-\int_0^t \int_0^1  2 \chi(\rho(r,u))\;\Delta T_{t-r}f(u)\;T_{t-r}f(u)\; du\; dr.
\end{split}
\end{equation*}
Since $2f\partial_u f=\partial_u f^2$,  last expression becomes
\begin{equation*}
\begin{split}
&-\int_0^t\!\!\!\int_0^1 \!\!\!\nabla \chi(\rho(r,u))\nabla ( T_{t-r}f(u))^2 du\, dr-\int_0^t\!\!\!\int_0^1 \!\!\! 2 \chi(\rho(r,u))(\Delta T_{t-r}f(u))T_{t-r}f(u)du\, \,dr.
\end{split}
\end{equation*}
On the other hand since $\partial_{r}T_{t-r}f=-\Delta T_{t-r}f$, we can rewrite the last expression as:
\begin{equation*}
\begin{split}
&-\int_0^t\!\!\int_0^1\!\!\!\nabla \chi(\rho(r,u))\nabla (T_{t-r}f(u))^2 du\, dr+\int_0^t\!\!\int_0^1 \!\! \! 2 \chi(\rho(r,u))\partial_{r}T_{t-r}f(u)T_{t-r}h(u)du \,dr,
\end{split}
\end{equation*}
which equals to
\begin{equation*}
\begin{split}
&-\int_0^t\int_0^1\nabla \chi(\rho(r,u))\nabla (T_{t-r}f(u))^2 \,du\, dr+\int_0^t\int_0^1 \chi(\rho(r,u))\partial_r(T_{t-r}f(u))^2\, du\, dr\,.\\
\end{split}
\end{equation*}
Integrating by parts in time the second term above, we write the last expression as  
\begin{equation*}
\begin{split}
&-\int_0^t\int_0^1\nabla \chi(\rho(r,u))\,\nabla ( T_{t-r}f(u))^2 \,du\, dr\\
&+\int_0^1 \chi(\rho(t,u))(f(u))^2du-\int_0^1 \chi(\rho(0,u))(T_tf(u))^2du \\
&-\int_0^t\int_0^1 \partial_r\chi(\rho(r,u))(T_{t-r}f(u))^2\,du\, dr\,.\\
\end{split}
\end{equation*}
Integrating by parts in space the first term above, then the previous expression is equal to 
\begin{equation*}
\begin{split}
&-\int_0^t\Big[(1-2\rho(r,1))\nabla \rho(r,1)( T_{t-r}f(1))^2-(1-2\rho(r,0))\nabla \rho(r,0))( T_{t-r}f(0))^2\Big]\, dr\\
&+\int_0^t\int_0^1\Delta \chi(\rho(r,u))( T_{t-r}f(u))^2 \;du\; dr\\
&+\int_0^1 \chi(\rho(t,u))(f(u))^2du-\int_0^1 \chi(\rho(0,u))(T_tf(u))^2du \\
&-\int_0^t\int_0^1 \partial_r\chi(\rho(r,u))(T_{t-r}f(u))^2\;du\; dr\,.\\
\end{split}
\end{equation*}
Since $-\partial_r\chi(\rho(r,u))+\Delta \chi(\rho(r,u))=-2(\nabla \rho(r,u))^2$, last expression is equal to
\begin{equation*}
\begin{split}
&-\int_0^t\Big[(1-2\rho(r,1))\nabla \rho(r,1)( T_{t-r}f(1))^2-(1-2\rho(r,0))\nabla \rho(r,0))( T_{t-r}f(0))^2\Big]\, dr\\
&-\int_0^t\int_0^12(\nabla\rho(r,u))^2( T_{t-r}f(u))^2 \,du\, dr\\
&+\int_0^1 \chi(\rho(r,u))(f(u))^2du-\int_0^1 \chi(\rho(0,u))(T_tf(u))^2du\,.
\end{split}
\end{equation*}
In short, we have that $R_t(f)$ is the sum of the expression above, \eqref{54},  \eqref{52} and \eqref{53}. Then, since $\rho(r,u)$ is the solution of the hydrodynamic equation and using the fact that $\partial_u T_{t-r}f(0)= T_{t-r}f(0)$ and $\partial_u T_{t-r}f(1)= -T_{t-r}f(1)$ we can rewrite $R_t(f)$ as
\begin{align}
&-\int_0^t\int_0^12(\nabla\rho(r,u))^2( T_{t-r}f(u))^2 \;du\; dr\label{55}\\
&+\int_0^1 \chi(\rho(t,u))(f(u))^2du-\int_0^1 \chi(\rho(0,u))(T_tf(u))^2\;du\label{56}\\
&+\int_0^t\Big[2\rho(r,1)(2\beta-1)(T_{t-r}f(1))^2+2\rho(r,0)(2\alpha-1)( T_{t-r}f(0))^2\Big]\; dr\,.\label{57}
\end{align}
At this point we take the limit of $R_t(f)$ as 
$t\rightarrow{+\infty}$. 
By Corollaries \ref{cor421}  and \ref{cor45}, \eqref{56}  converges to
\begin{equation}\label{58b}
\int_0^1 \chi(\overline{\rho}(u))(f(u))^2du\,.
\end{equation}
Denote $g(r,u)=(\nabla\rho(r,u))^2$ for short.  By a change of variables in time, \eqref{55} can be rewritten as
\begin{equation}\label{58a}
-\int_0^t\int_0^1g(t-r,u)\cdot 2\,\big( T_{r}f(u)\big)^2 \,du\, dr\,.
\end{equation}
From $(b)$ in Corollary \ref{corinv}, $\lim_{t\to +\infty} g(t-r,u)=\big(\nabla\overline{\rho}(u)\big)^2$ and $\lim_{r\to+\infty}T_rf=0$. Then, some analysis permits to conclude that \eqref{58a} converges to 
\begin{equation}\label{510a}
-\int_0^1\big(\nabla\overline{\rho}(u)\big)^2 f(u)\,(-\Delta)^{-1}f(u) \;du\,.
\end{equation}
It remains to deal with \eqref{57}. Since $\lim_{t\to+\infty} \rho(r,1)=\overline{\rho}(1)$, $\lim_{t\to+\infty} \rho(r,0)=\overline{\rho}(0)$ and 
$\lim_{r\to+\infty}T_rf=0$, similarly to what we have done above,  we deduce  that \eqref{57} converges to
\begin{equation}\label{510}
\int_0^\infty2\overline{\rho}(1)(2\beta-1)(T_{t}f(1))^2\,dr+\int_0^\infty 2\overline{\rho}(0)(2\alpha-1)( T_{t}f(0))^2\, dr\,.
\end{equation}
In conclusion, the limit of $R_t(f)$ as $t\to+\infty$ is the sum of \eqref{58b}, \eqref{510a} and \eqref{510}, that is,
\begin{equation}\label{58}
\begin{split}
&-\int_0^1(\nabla\overline{\rho}(u))^2 f(u)(-\Delta)^{-1}f(u) \;du+\int_0^1 \chi(\overline{\rho}(u))(f(u))^2du\\
&+2\overline{\rho}(1)(2\beta-1)\int_0^\infty(T_{t}f(1))^2\,dr+2\overline{\rho}(0)(2\alpha-1)\int_0^\infty ( T_{t}f(0))^2\, dr\,.\\
\end{split}
\end{equation}
Since $\nabla\overline{\rho}(u)=\frac{\beta-\alpha}{3}$, $\overline{\rho}(0)=\pfrac{\beta+2\alpha}{3}$ and $\overline{\rho}(1)=\pfrac{2\beta+\alpha}{3}$, we have just proved the claim. 
In particular, the variance of $W_t(f)$ converges, as $t\to\infty$, to \eqref{stat convariance}, so that $W_t(f)$ converges in distribution to a mean zero Gaussian random variable with variance given by \eqref{stat convariance}. Collecting the previous results we get that the random variables  $\mc Y_t(f)$ are mean zero Gaussian with covariance given by \eqref{stat convariance}. Since the process is stationary this ends the proof of Theorem \ref{flustat}.

\section{Tightness}\label{s6}
Now we prove that the sequence of processes $\{\mc Y_t^n; t \in [0,T]\}_{n \in \bb N}$ is tight. Recall that we have defined the density fluctuation field on test functions $f\in\mc S$. Since we want to use  Mitoma's criterium   \cite{Mitoma} for tightness,  we need the following property from the space $\mathcal S$.
\begin{proposition}\label{frechet}
 The space $\mc S$ endowed with the semi-norms given in \eqref{semi-norm}
is a Fr\'echet space.
\end{proposition}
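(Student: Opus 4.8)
The plan is to verify directly that $\mc S$ equipped with the countable family of seminorms $\{\|\cdot\|_k\}_{k\geq 0}$ is a complete, Hausdorff, metrizable, locally convex topological vector space, which is exactly what it means to be a Fr\'echet space. First I would note that the family of seminorms is countable and that it separates points: already $\|f\|_0=\sup_{u\in[0,1]}|f(u)|$ vanishes only for $f\equiv 0$. Consequently the standard metric
\begin{equation*}
d(f,g)\;=\;\sum_{k=0}^\infty 2^{-k}\,\frac{\|f-g\|_k}{1+\|f-g\|_k}
\end{equation*}
is well defined, induces the topology generated by the seminorms, and turns $\mc S$ into a metrizable locally convex space. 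The only substantial point that remains is completeness.

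To establish completeness I would take a sequence $\{f_m\}_{m\in\bb N}$ that is Cauchy for $d$, which is equivalent to being Cauchy in each seminorm $\|\cdot\|_k$ separately. For every fixed $k$ the sequence $\{\partial_u^k f_m\}_m$ is then Cauchy in the supremum norm on $C[0,1]$; since $(C[0,1],\|\cdot\|_\infty)$ is complete, there exist continuous functions $g_k$ with $\partial_u^k f_m\to g_k$ uniformly on $[0,1]$. Applying inductively in $k$ the classical theorem on termwise differentiation of uniformly convergent sequences (if $f_m\to g_0$ uniformly and $\partial_u f_m\to g_1$ uniformly, then $g_0$ is differentiable with $g_0'=g_1$), I would conclude that $g_0\in C^\infty([0,1])$ with $\partial_u^k g_0=g_k$ for every $k$. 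By construction $\|f_m-g_0\|_k=\|\partial_u^k f_m-g_k\|_\infty\to 0$ for each $k$, so $f_m\to g_0$ in the topology of $\mc S$, provided one checks $g_0\in\mc S$.

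The hard part---though it is in fact mild---is to verify that the limit $g_0$ remains in the subspace $\mc S$, i.e., that the boundary conditions of Definition \ref{def1} are stable under this convergence. For each $m$ and each $k$ one has $\partial_u^{2k+1}f_m(0)=\partial_u^{2k}f_m(0)$ and $\partial_u^{2k+1}f_m(1)=-\partial_u^{2k}f_m(1)$. Since $\partial_u^j f_m\to\partial_u^j g_0$ uniformly, in particular pointwise at $u=0$ and $u=1$, passing to the limit in $m$ yields $\partial_u^{2k+1}g_0(0)=\partial_u^{2k}g_0(0)$ and $\partial_u^{2k+1}g_0(1)=-\partial_u^{2k}g_0(1)$ for every $k$, so $g_0\in\mc S$. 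This exhibits $\mc S$ as a closed subspace of the Fr\'echet space $C^\infty([0,1])$ carrying the same seminorms, hence itself complete, which finishes the proof. In short, the whole argument reduces to the completeness of $C[0,1]$ together with the observation that the linear boundary constraints defining $\mc S$ pass to uniform limits.
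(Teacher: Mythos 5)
Your proof is correct and follows essentially the same route as the paper: both arguments reduce to showing that $\mc S$ is a closed subspace of the Fr\'echet space $C^\infty([0,1])$, with closedness following because the linear boundary constraints of Definition \ref{def1} pass to uniform (hence pointwise) limits. The only difference is that you spell out the completeness of $C^\infty([0,1])$ via termwise differentiation, whereas the paper simply cites this standard fact together with the fact that closed subspaces of Fr\'echet spaces are Fr\'echet.
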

\begin{proof}
The definition of a Fr\'echet space can be found, for instance, in \cite{reedsimon}. Since $C^{\infty}([0,1])$   endowed with the semi-norms \eqref{semi-norm}
is a Fr\'echet space, and a closed subspace of a Fr\'echet space is also a Fr\'echet space, it is enough to show that   $\mc S$ is a closed subspace of $C^\infty([0,1])$., which   is a consequence of the fact that uniform convergence implies point-wise convergence.
\end{proof}
As a consequence of Mitoma's
criterium  \cite{Mitoma} and Proposition \ref{frechet}, the proof of tightness of the $\mc S'$ valued processes  $\{\mc Y_t^n; t \in [0,T]\}_{n \in \bb N}$ follows from tightness of the sequence of real-valued processes $\{\mc Y_t^n(f); t \in [0,T]\}_{n \in \bb N}$,
for $f\in{\mc {S}}$.

\begin{proposition}[Mitoma's criterium,  \cite{Mitoma}]
A sequence  of processes $\{x_t;t \in [0,T]\}_{n \in \bb N}$  in $\mc D([0,T],\mc {S}')$ is tight with respect to the
Skorohod topology if, and only if, the sequence $\{x_t(f);t \in [0,T]\}_{n \in \bb N}$ of real-valued processes is tight with
respect to the Skorohod topology of $\mc D([0,T], \bb R)$, for any $f \in \mc {S}$.
\end{proposition}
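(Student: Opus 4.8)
The plan is to deduce the criterium from the \emph{nuclear} structure of the test-function space, so that tightness in the dual can be reduced, level by level, to tightness of scalar projections. First I would record that $C^\infty([0,1])$ with the seminorms \eqref{semi-norm} is a countably Hilbertian nuclear Fréchet space: replacing each $\|\cdot\|_k$ by the equivalent Sobolev-type Hilbertian norm $\|f\|_{(p)}^2=\sum_{k\le p}\int_0^1(\partial_u^k f)^2\,du$ generates the same topology (by one-dimensional Sobolev embedding the sup- and $L^2$-seminorms of derivatives are comparable up to one extra derivative), yields an increasing chain of Hilbert spaces $\mc S_p$ (completions of $\mc S$ under $\|\cdot\|_{(p)}$), and for $q-p$ large the inclusions $\mc S_q\hookrightarrow\mc S_p$ are Hilbert--Schmidt. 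Dualizing, one gets $\mc S'=\bigcup_p\mc S_{-p}$ with compact (Hilbert--Schmidt) inclusions $\mc S_{-p}\hookrightarrow\mc S_{-q}$ for $q>p$. By Proposition \ref{frechet} the space $\mc S$ is closed in $C^\infty([0,1])$, hence inherits nuclearity, so these reductions are legitimate.

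The forward implication is immediate. For fixed $f\in\mc S$ the evaluation $\Phi\mapsto\Phi(f)$ is a continuous linear functional on $\mc S'$, hence $\omega\mapsto\omega(f)$ is continuous from $\mc D([0,T],\mc S')$ to $\mc D([0,T],\bb R)$ for the Skorohod topologies; since continuous pushforwards preserve tightness, tightness of $\{x^n\}$ forces tightness of each $\{x^n(f)\}_n$.

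For the converse, which is the substantial direction, I would reduce $\mc S'$-tightness to tightness in a single Hilbert space $\mc D([0,T],\mc S_{-q})$ for a suitably large $q$, using that relative compactness in the nuclear dual is equivalent to relative compactness at one Hilbert level. In the separable Hilbert space $\mc S_{-q}$ one then verifies (a) compact containment and (b) an Aldous-type control of the $\mc S_{-q}$-modulus of continuity. The bridge to the hypothesis is the singular-value factorization of the Hilbert--Schmidt embedding $\mc S_{p}\hookrightarrow\mc S_{p'}$ (with $p'<p$), which furnishes a CONS $\{\chi_j\}\subset\mc S$ and summable weights $\mu_j$, $\sum_j\mu_j^2<\infty$, with $\|\Phi\|_{-p}^2=\sum_j\mu_j^2\,\Phi(\chi_j)^2$ for every $\Phi\in\mc S'$. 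Since a ball $\{\|\cdot\|_{-p}\le R\}$ is compact in $\mc S_{-q}$ for $q>p$, compact containment follows once $\sup_{t\le T}\|x^n_t\|_{-p}^2\le\sum_j\mu_j^2\sup_{t\le T}x^n_t(\chi_j)^2$ is shown bounded in probability uniformly in $n$, each scalar $\sup_{t\le T}|x^n_t(\chi_j)|$ being tight by assumption; condition (b) decomposes the same way after an Aldous--Rebolledo estimate for each process $x^n(\chi_j)$.

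The hard part will be exactly this last step: upgrading the \emph{coordinatewise} tightness of the scalars into control of the infinite weighted sums uniformly in $n$. Tightness only yields, at each level $\eps2^{-j}$, radii $R_j$ with $\sup_n\bb P\big(\sup_{t\le T}|x^n_t(\chi_j)|>R_j\big)\le\eps2^{-j}$, with no control on the growth of $R_j$; the decisive observation -- and the reason full nuclearity rather than a single Hilbert--Schmidt step is needed -- is that one may choose the target order $q$ (equivalently, the number of additional derivatives) \emph{after} the $R_j$ are fixed, and at sufficiently high order the singular values $\mu_j=\mu_j(q)$ decay faster than any prescribed sequence, so that $\sum_j\mu_j^2R_j^2<\infty$. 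A truncation together with Borel--Cantelli then gives compact containment, and the same mechanism handles the modulus bound. This balancing is precisely the content of \cite{Mitoma}, which we invoke here.
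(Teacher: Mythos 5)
The first thing to note is that the paper does not prove this proposition at all: it is quoted from \cite{Mitoma}, and the only ingredient the paper actually proves in this connection is Proposition \ref{frechet} (that $\mc S$ is a Fr\'echet space), which is what licenses the application of Mitoma's theorem. Your proposal ultimately does the same thing --- its final sentence invokes \cite{Mitoma} for the decisive step --- so as a citation it matches the paper. Your necessity direction (continuity of the evaluation map and pushforward of tightness) is correct and is also how that direction is usually dismissed.

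The problem is that the machinery you sketch for the sufficiency direction would not close as described, and it is not an accurate account of what \cite{Mitoma} proves, so presenting the citation as confirming ``precisely'' your balancing step is misleading. The concrete flaw is a circularity: the orthonormal systems $\{\chi_j\}$, $\{\psi_j\}$ and the singular values $\mu_j(q)$ come from the SVD of the \emph{specific} embedding $\mc S_q\hookrightarrow \mc S_p$, so they change when $q$ changes. The radii $R_j$ you extract from the hypothesis are attached to those particular functions; you cannot fix the $R_j$ first and then enlarge $q$, because a new $q$ produces a new orthonormal system and hence new, uncontrolled radii. (This could be repaired if all the Hilbertian norms were simultaneously diagonalized by one fixed basis --- e.g.\ powers of a fixed self-adjoint operator acting on its eigenbasis --- but the seminorms \eqref{semi-norm} are sup-norms of derivatives and no such common basis is exhibited.) A second gap: the SVD vectors live in the completions $\mc S_q$, $\mc S_p$, not necessarily in $\mc S$, whereas the hypothesis gives scalar tightness only for $f\in\mc S$, so you may not evaluate the processes on them without an approximation argument. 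Mitoma's actual route around both obstacles is different: from scalar tightness one deduces, by a Baire-category (uniform boundedness) argument in the Fr\'echet space $\mc S$, an \emph{equicontinuity} statement --- a single neighborhood $U\subset\mc S$ and $\delta,\eps$ with $\sup_n \bb P\big(\sup_{t\leq T}|x^n_t(f)|>\delta\big)\leq \eps$ for all $f\in U$ --- and only then uses nuclearity to dominate the associated seminorm by a Hilbert--Schmidt one, yielding compact containment in some $\mc S_{-q}$. That equicontinuity step is the missing idea; the balancing of $\mu_j$ against $R_j$ that you describe does not replace it.
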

Now, to show tightness of the real-valued process we use the Aldous' criterium:
\begin{proposition}
 A sequence $\{x_t; t\in [0,T]\}_{n \in \bb N}$ of real-valued processes is tight with respect to the Skorohod topology of $\mc
D([0,T],\bb R)$ if:
\begin{itemize}
\item[i)]
$\displaystyle\lim_{A\rightarrow{+\infty}}\;\limsup_{n\rightarrow{+\infty}}\;\mathbb{P}_{\mu_n}\Big(\sup_{0\leq{t}\leq{T}}|x_{t
} |>A\Big)\;=\;0\,,$

\item[ii)] for any $\varepsilon >0\,,$
 $\displaystyle\lim_{\delta \to 0} \;\limsup_{n \to {+\infty}} \;\sup_{\lambda \leq \delta} \;\sup_{\tau \in \mc T_T}\;
\mathbb{P}_\rho^\beta(|
x_{\tau+\lambda}- x_{\tau}| >\varepsilon)\; =\;0\,,$
\end{itemize}
where $\mc T_T$ is the set of stopping times bounded by $T$.
\end{proposition}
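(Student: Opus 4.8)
The plan is to obtain tightness from Prohorov's theorem, so that it suffices to prove that the laws of the real-valued processes $\{x_t;\,t\in[0,T]\}$ form a relatively compact family on $\mc D([0,T],\bb R)$. I would then invoke the classical characterization of relative compactness in the Skorohod space, which reduces the problem to two stochastic conditions: a uniform control of $\sup_{0\le t\le T}|x_t|$ (compact containment), and a control of the Skorohod modulus of continuity
\[
w'(x,\delta)\;=\;\inf_{\{t_i\}}\;\max_i\;\sup_{t_{i-1}\le s<t<t_i}|x(t)-x(s)|\,,
\]
where the infimum runs over all partitions $0=t_0<\cdots<t_r=T$ with $\min_i(t_i-t_{i-1})>\delta$. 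Relative compactness follows once one shows $\lim_{A\to\infty}\limsup_n\mathbb{P}(\sup_t|x_t|>A)=0$ and, for every $\epsilon>0$, $\lim_{\delta\to0}\limsup_n\mathbb{P}(w'(x,\delta)>\epsilon)=0$. The first of these is exactly hypothesis (i) (which in particular also yields tightness at each fixed time), so the entire content of the statement is to extract the modulus estimate from the stopping-time hypothesis (ii).

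For the modulus estimate I would work with the two-sided oscillation modulus
\[
w''(x,\delta)\;=\;\sup_{\,t_1\le t\le t_2,\ t_2-t_1\le\delta}\;\min\big(|x_t-x_{t_1}|,\;|x_{t_2}-x_t|\big)\,,
\]
rather than directly with $w'$. The reason is structural: an isolated large jump does not contribute to $w''$, which matches the fact that hypothesis (ii) cannot (and should not) preclude single jumps of the limit; the classical inequalities relate $w'$ to $w''$ up to the size of the largest jump, and for c\`adl\`ag limits this discrepancy is harmless once the behaviour near the endpoint $T$ is separately controlled. Thus it suffices to prove, for each $\epsilon>0$, that $\lim_{\delta\to0}\limsup_n\mathbb{P}(w''(x^n,\delta)>\epsilon)=0$.

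To extract this bound from (ii) I would fix $\epsilon>0$ and introduce the greedy first-exit stopping times $\tau_0=0$ and $\tau_{j+1}=\inf\{t>\tau_j:\,|x_t-x_{\tau_j}|>\epsilon\}\wedge T$, each of which is bounded by $T$. The event $\{w''(x,\delta)>\epsilon\}$ forces \emph{two} $\epsilon$-sized fluctuations of $x$ inside a single window of length at most $\delta$, i.e.\ increments over these stopping times at lags $\le\delta$. Each such oscillation I would estimate by anchoring it at a deterministic endpoint: with $\sigma=\inf\{\lambda\ge0:|x_{\tau+\lambda}-x_\tau|>\epsilon\}\wedge\delta$, the triangle inequality gives the inclusion
\[
\{\,\textstyle\sup_{\lambda\le\delta}|x_{\tau+\lambda}-x_\tau|>\epsilon\,\}\ \subset\ \{\,|x_{\tau+\delta}-x_\tau|>\tfrac{\epsilon}{2}\,\}\cup\{\,|x_{\tau+\delta}-x_{\tau+\sigma}|>\tfrac{\epsilon}{2}\,\}\,,
\]
whose first event is covered directly by (ii) at the stopping time $\tau$ with the deterministic lag $\delta$, while the second compares the value at the stopping time $\tau+\sigma$ with that at the deterministic time $\tau+\delta$. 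Running this through the two consecutive fluctuations defining $w''$, and using the uniform smallness in (ii) over all bounded stopping times and all lags $\le\delta$, would then drive the probability to $0$ in the iterated limit; finally, letting $\epsilon=\epsilon_k\downarrow0$ delivers the modulus bound for every level and completes the proof.

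The main obstacle is precisely the second event in the decomposition above: hypothesis (ii) controls increments over a \emph{deterministic} lag, whereas the first-exit construction produces the \emph{random} lag $\delta-\sigma$ between the stopping time $\tau+\sigma$ and the deterministic time $\tau+\delta$. Converting this random lag into something admissible for (ii) is the delicate heart of Aldous's argument, and I would handle it by discretizing $\sigma$ onto a fine deterministic grid of mesh $\delta/m$: this replaces $\tau+\sigma$ by finitely many grid-anchored stopping times with deterministic lags, to each of which (ii) applies, after which a union bound over the $m$ grid cells followed by $m\to\infty$ and the right-continuity of the paths closes the gap. The subtle point throughout is to arrange the counting so that the estimates remain uniform in $n$ and do not degrade as $\delta\to0$; it is exactly here that the two-sided structure of $w''$ (as opposed to the one-sided oscillation entering $w'$) is what keeps the bookkeeping under control.
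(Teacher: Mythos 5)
First, note that the paper offers no proof of this Proposition: it is Aldous' classical criterion, quoted verbatim and used as a black box (only Mitoma's criterion carries a citation), so your attempt must be measured against the standard literature proof (Aldous 1978; Billingsley, Theorem 16.10; Kipnis--Landim, Chapter 4). Your outline follows that classical route faithfully in its architecture: reduction to relative compactness in $\mc D([0,T],\bb R)$, compact containment from (i), control of a modulus of oscillation via greedy first-exit stopping times, and the triangle-inequality inclusion anchoring $\sup_{\lambda\le\delta}|x_{\tau+\lambda}-x_\tau|$ at the deterministic endpoint $\tau+\delta$ (that inclusion is correct, using right-continuity to get $|x_{\tau+\sigma}-x_\tau|\ge\epsilon$ on the bad event). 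You also correctly isolate the one nontrivial point: hypothesis (ii) controls increments over \emph{deterministic} lags, while the construction produces the \emph{random} lag $\delta-\sigma$.

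It is precisely at that point that your proposal has a genuine gap. The grid discretization you propose does not close: on the cell $\{\sigma\in[j\delta/m,(j+1)\delta/m)\}$, applying (ii) at the stopping time $\tau+\sigma$ with the deterministic lag $\delta-j\delta/m$ controls $|x_{\tau+\sigma+\delta-j\delta/m}-x_{\tau+\sigma}|$, but the comparison time differs from $\tau+\delta$ by the residual random lag $\sigma-j\delta/m\in[0,\delta/m)$ --- so the problem reproduces itself at scale $\delta/m$ and the recursion never terminates. Trying to kill the residual by right-continuity fails on quantifier order: the union bound over the $m$ cells costs a factor $m$ times the (ii)-bound, so you cannot send $m\to\infty$ before $\delta\to 0$; and for fixed $m$ the right-continuity correction tends to zero only for each fixed $n$, with no uniformity in $n$, whereas the criterion requires $\limsup_{n}$ to be taken first. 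The classical resolution is different and cannot be recovered by discretization: one integrates (ii) against a \emph{uniformly distributed} auxiliary lag $u\in[0,2\delta]$ and uses Fubini, so that for stopping times $\sigma\le\tau\le\sigma+\delta$ the quantities $\tfrac{1}{2\delta}\int_0^{2\delta}\mathbb{P}\big(|x_{\tau+u}-x_\tau|\ge\epsilon\big)\,du$ and the analogous integral anchored at $\sigma$ are small; after a change of variables the random lag is absorbed into the Lebesgue integral, and on an event of probability at least $\tfrac12$ (where $\sigma+u\ge\tau$) the triangle inequality yields the two-stopping-time estimate $\mathbb{P}(|x_\tau-x_\sigma|\ge 2\epsilon)$ small, uniformly in $n$. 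With that lemma in hand, your successive exit times $\tau_{j+1}=\inf\{t>\tau_j:|x_t-x_{\tau_j}|>\epsilon\}$ do give $\mathbb{P}(\tau_{j+1}-\tau_j\le\delta)$ small and a bounded number of oscillation times, whence the bound on $w'$ (or your $w''$) and tightness. So the skeleton of your argument is right, but the step you yourself flag as ``the delicate heart'' is handled by a mechanism that fails, and the randomization/Fubini trick --- not discretization --- is the missing idea.
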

Fix $f\in{\mc S}$. By \eqref{martingaleM}, it is enough to prove tightness of $\{\mc Y_0^n(f)\}_{n \in
\bb N}$, $\{ \int_{0}^t\Gamma_s^n(f)\, ds; t \in [0,T]\}_{n \in \bb N}$, and $\{\mc M_t^n(f
); t \in [0,T]\}_{n \in \bb N}$. 
\subsection{Tightness at the initial time}
To prove that the sequence $\{\mc Y_0^n(f)\}_{n \in
\bb N}$ is tight, it is enough to  observe that 
\begin{equation*}
\begin{split}
\bb E_{\mu_n}\Big[\Big(\mathcal Y_0^n(f)\Big)^2\Big]&\;=\;\frac{1}{n}\sum_{x=1}^{n-1}f^2\Big(\frac xn\Big)\chi(\rho^n_0(x))+\frac 2n\sum_{x<y}f\Big(\frac xn\Big)f\Big(\frac yn\Big)\varphi^n_0(x,y)
\end{split}
\end{equation*}
and by 
Assumption \ref{assumption2} last expression is clearly bounded. 

\subsection{Tightness of the martingales}
By Lemma \ref{lemma32} since the sequence of martingales converges, in particular, it is tight.

\subsection{Tightness of the integral terms}

The first claim of Aldous' criterium, can be easily checked for the integral term $\int_{0}^t\Gamma_s^n(f)\, ds$.
Since $f\in{\mc{S}}$  and by the Cauchy-Schwarz inequality we have that
\begin{equation*}
\mathbb{E}_{\mu_n}\Big[\sup_{t\leq {T}}\Big(\int_{0}^t\Gamma_s^n(f)\, ds\Big)^2\Big]\leq T \int_{0}^T \mathbb{E}_{\mu_n}\Big[\Big(\frac{1}{\sqrt{n}}\sum_{x=1}^{n-1}\Delta_n f(\tfrac{x}{n})(\eta_{sn^2}(x)-\rho^n_s(x))\Big)^2\Big]\, ds
\end{equation*}
plus a term $O\Big(\frac{1}{n}\Big)$.
The term on the right hand side of last expression is bounded from above by $T^2$ times
\begin{equation}\label{estimate}
\frac{1}{{n}}\sum_{x=1}^{n-1}\big(\Delta_n f(\tfrac{x}{n})\Big)^2\sup_{t\leq{T}}\chi(\rho^n_t(x))
+\frac{1}{{n}}\sum_{\at{x\neq y}{x,y=1}}^{n-1}\Delta_n f(\tfrac{x}{n})\Delta_n f(\tfrac{y}{n})\sup_{t\leq{T}}\varphi^n_t(x,y)\,,
\end{equation}
where $\varphi^n_t(x,y)$ is given in \eqref{cov}.
Now, by Proposition \ref{prop3.1} and since $f\in\mc S$ last expression is bounded by a constant. Now we need to check the second claim. For that purpose,
fix a stopping time $\tau \in \mc T_T$. By the Chebychev's inequality together  with \eqref{estimate}, we get that
 \begin{equation*}
\mathbb{P}_{\mu_n}\Big(\Big|  \int_{\tau}^{\tau+\lambda}\!\!\Gamma^n_s(f)\, ds\;\Big| >\varepsilon\Big)
	\leq \frac{1}{\varepsilon^2} \mathbb{E}_{\mu_n}\Big[ \Big(  \int_{\tau}^{\tau+\lambda}\!\!\Gamma^n_s(f)\; ds \;\Big)^2\Big]
	\leq \frac{\delta^2 C}{\varepsilon^2}\,,
\end{equation*}
which vanishes as $\delta\rightarrow{0}$.

\section{Discrete equations}\label{s7}

In this section we prove some technical estimates that are needed along the paper.  
\subsection{Two point correlation function}

\begin{definition}[Two-point correlation function]

 For each $x,y\in\Sigma_n$, $x<y$, and $t\in[0,T]$, we define  the two-point correlation function as
 \begin{equation}\label{cov}
\varphi^n_t(x,y)\;=\;\mathbb{E}_{\mu_n}[\eta_{tn^2}(x)\eta_{tn^2}(y)]-\rho^n_t(x)\rho^n_t(y)\,,
\end{equation}
where  $\rho^n_t$ was defined in \eqref{rho_t}. 
Moreover, for $x=0$ or $y=n$, we set  $\varphi_t^n(x,y)=0,$ 
\end{definition}
\begin{proposition}\label{prop3.1}
There exists $C>0$ such that
\begin{equation}\label{eq181}
\sup_{t\geq 0}\max_{(x,y)\in V_n}|\varphi_t^n(x,y)|\;\leq \; \frac{C}{n}\,,
\end{equation}
 where
$V_n\;=\;\{(x,y)\,;\, x,y\in \bb N ,\, 0< x<y<   n\}$.
\end{proposition}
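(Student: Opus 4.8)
The plan is to derive a closed discrete evolution equation for $\varphi_t^n$ on the triangle $V_n$ and then to analyze it by a Duhamel representation combined with an occupation-time estimate for the associated two-particle random walk. First I would compute $\partial_t\varphi_t^n(x,y)$ by applying the generator $n^2\mc L_n$ to $\eta(x)\eta(y)$ and subtracting the evolution of $\rho^n_t(x)\rho^n_t(y)$, which is controlled by \eqref{disc_heat}. A long but elementary computation shows that $\varphi_t^n$ solves an equation of the form
\begin{equation*}
\partial_t\varphi_t^n\;=\;n^2(\mc A_n\varphi_t^n)+F_t^n\,,
\end{equation*}
where $\mc A_n$ is the generator of two labelled particles performing the symmetric exclusion walk on $V_n$: a discrete Laplacian in the two variables in the interior, with a reflecting (Neumann-type) condition along the diagonal $\{y=x+1\}$ and slow (rate $1/n$) absorption at the reservoir sites, consistently with the boundary convention $\varphi_t^n(x,y)=0$ whenever $x=0$ or $y=n$ from \eqref{cov}. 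The inhomogeneity $F_t^n$ is supported on the diagonal and equals, up to sign, $2n^2\big(\rho^n_t(x+1)-\rho^n_t(x)\big)^2\mathbf 1_{\{y=x+1\}}$, together with lower-order boundary terms of size $O(1/n)$ produced by the reservoirs.

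The two ingredients I then need are as follows. First, a uniform-in-time discrete gradient bound $\max_x|\rho^n_t(x+1)-\rho^n_t(x)|\le C/n$, which follows from the maximum principle (or the explicit semigroup) for the one-dimensional scheme \eqref{disc_heat} and immediately yields $\|F_t^n\|_\infty\le C$. Second, the Duhamel formula
\begin{equation*}
\varphi_t^n\;=\;P^n_t\varphi_0^n+\int_0^t P^n_{t-s}\,F_s^n\,ds\,,
\end{equation*}
where $P^n$ is the sub-Markovian semigroup generated by $n^2\mc A_n$. Since $P^n$ is an $L^\infty$-contraction, the first term is bounded by $\|\varphi_0^n\|_\infty\le C_2/n$ thanks to Assumption \ref{assumption2}. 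For the second term, using that $F_s^n$ is supported on the diagonal with $\|F_s^n\|_\infty\le C$, it suffices to control the expected occupation time of the diagonal, $\int_0^t[P^n_{t-s}\mathbf 1_{\{y=x+1\}}]\,ds$. Projecting onto the difference coordinate $d=y-x$, this reduces to the local time at a single site of a one-dimensional walk on an interval of length of order $n$ run at speed $n^2$; a Green's-function estimate (Green's function of order $n$ visits, each of continuous-time length of order $n^{-2}$) shows this occupation time is $O(1/n)$, uniformly in the starting point and in $t$. Combining the two bounds gives $|\varphi_t^n(x,y)|\le C'/n$, as claimed in \eqref{eq181}.

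The main obstacle is precisely the analysis of this bidimensional scheme near the boundary: the slowed reservoirs make $\mc A_n$ a non-standard operator, neither purely Dirichlet nor purely Neumann, so both the gradient bound and the Green's-function/occupation-time estimate require a careful treatment of the boundary rather than an off-the-shelf heat-kernel bound. An equivalent and perhaps more robust route, avoiding explicit occupation-time computations, is to construct by hand a super-solution (barrier) $\Phi$ on $V_n$ with $\Phi=O(1/n)$ on the boundary and $n^2\mc A_n\Phi\le-\|F\|_\infty$ along the diagonal --- for instance built from the Green's function of the difference walk --- and then to invoke the discrete maximum principle to conclude $|\varphi_t^n|\le\Phi\le C'/n$. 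Either way, the uniformity in $t$ is what forces us to work with the stationary (time-independent) barrier or with the $L^\infty$-contraction of $P^n$, rather than with a naive integration over $[0,t]$.
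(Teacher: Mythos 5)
Your proposal follows the same architecture as the paper's proof: the closed discrete equation \eqref{ODEsystem_phi} for $\varphi^n_t$, with source supported on the diagonal and Dirichlet condition on $\p V_n$; a Duhamel representation through the semigroup of the two-dimensional walk absorbed at $\p V_n$; Assumption \ref{assumption2} for the initial term; and a bound of the form (sup of the source) $\times$ (occupation time of the diagonal). Two inaccuracies in your setup are harmless but worth noting: the reservoirs create no source terms whatsoever --- a direct computation shows their whole contribution is exactly the rate-$1/n$ absorption encoded in $\A_n$ --- and the diagonal source \eqref{g} carries no factor $2$. The first genuine gap is the uniform-in-time gradient bound $\max_x|\rho^n_t(x+1)-\rho^n_t(x)|\le C/n$, which you treat as a routine consequence of ``the maximum principle (or the explicit semigroup) for \eqref{disc_heat}''. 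It is not: in the paper this is a separate substantive result, Proposition \ref{disc_cont_prox}. A maximum principle cannot work here because the slowed reservoirs force the boundary bonds to carry gradients of order one: $\rho^n_t(1)-\alpha$ and $\beta-\rho^n_t(n-1)$ stay $O(1)$ (already in the stationary profile \eqref{stat_profile_disc} one has $\rho^n_{ss}(1)-\alpha\approx(\beta-\alpha)/3$), so the equation satisfied by the discrete gradient has order-one boundary data, and any sup-norm comparison yields $O(1)$, not $O(1/n)$, in the interior; moreover, under Assumption \ref{assumption0} the initial profile is only measurable, so even the initial interior gradient need not be $O(1/n)$. The paper's proof instead compares $\rho^n_t$ with the smooth continuous solution of \eqref{eq_hydro_desc} and must then control discretization errors $F^n_t$ that are themselves only $O(1)$ at the sites $1$ and $n-1$; the factor $1/n$ is recovered by showing that the occupation time of these two sites by the absorbed one-dimensional walk is $O(1/n)$, via an explicit elliptic equation. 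This whole mechanism is absent from your proposal.

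The second gap is the diagonal occupation-time estimate. Projecting onto $d=y-x$ is not legitimate: $d$ is not a Markov process, since absorption occurs (at rate $1/n$) only when $x=1$ or $y=n-1$, an event not measurable with respect to $d$. This is fatal precisely for the uniformity in $t$ that you correctly identify as the crux: dropping the absorption, the difference walk reflected at $d=1$ has \emph{infinite} expected occupation time of $\{d=1\}$, so no bound uniform in $t\geq 0$ can come from the difference coordinate alone. The paper instead invokes the estimate of \cite[Section 3]{bmns} for the genuine two-dimensional absorbed walk, namely ${\bf E}_{(x,y)}\big[\int_0^\infty \Ind{X_s\in\mc D_n}\,ds\big]\le Cn$ in microscopic time, uniformly over $(x,y)\in V_n$. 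Your fallback barrier argument is the salvageable route: a supersolution of the form $\Phi(x,y)=a\big(n-(y-x)\big)+c$ with $a\asymp n^{-2}$ and $c\asymp n^{-1}$ does the job, the constant part being what feeds the absorption term ($-n^2\A_n\Phi$ picks up $+n\Phi$ at sites adjacent to $\p V_n$) needed to beat the outward drift that the diagonal reflection induces near the spatial boundary; but as written, neither of your two routes closes this step.
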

\begin{proof}
First, observe that $\varphi^n_t(x,y)$ can be rewritten as $$\bb E_{\mu_n}[(\eta_{tn^2}(x)-\rho^n_t(x))(\eta_{tn^2}(y)-\rho^n_t(y))]\,,$$ so that from  Kolmogorov's forward equation, we have  that 
\begin{equation*}
\p_t \varphi^n_t(x,y) \;=\;\bb E_{\mu_n}\Big[(n^2\mc L_n+\p_t) (\eta_{tn^2}(x)-\rho^n_t(x))(\eta_{tn^2}(y)-\rho^n_t(y))\Big]\,.
\end{equation*}
Applying \eqref{lnb} and \eqref{lno}  and performing 
some long, but elementary, calculations we deduce that $\varphi_t^n$ solves the following system of ODE's:
\begin{equation}\label{ODEsystem_phi}
\begin{cases}
\p_t \varphi_t^n(x,y)=n^2 \A_n \varphi^n_t(x,y) +g_t^n(x,y)\,, & \textrm{ for } (x,y)\in V_n\,,\; t>0\,,\\
\varphi_t^n(x,y)=0\,, & \textrm{ for } (x,y)\in \p V_n\,, \;t>0\,,\\
\varphi_0^n(x,y)=\bb E_{\mu_n}[\eta_0(x)\eta_0(y)]-\rho_0^n(x)\rho_0^n(y)\,, & \textrm{ for } (x,y)\in V_n\cup \p V_n\,,\\
\end{cases}
\end{equation}
where $\A_n$ is the linear operator that acts on functions $f:V_n\cup \p V_n \to\bb R$ as
 \begin{equation*}
 (\A_nf)(u)=\sum_{v\in V_n} c_n(u,v)\big[ f(v)-f(u)\big]\,,\quad \textrm{for} \; u\in{V_n}\,,
 \end{equation*}
 with
 \begin{equation*}
c_n(u,v)\;=\; \begin{cases}
 1\,, & \textrm{ if } \; \Vert u-v\Vert =1 \textrm{ and } \;u, v\in  V_n\,,\\
 n^{-1}\,, & \textrm{ if }\;  \Vert u-v\Vert=1\textrm{ and } u\in V_n,\; v\in \p V_n\,,\\ 
 0\,,& \textrm{ otherwise, }
 \end{cases}
 \end{equation*} and
 $\p V_n$ represents the boundary of the set $V_n$, which we define as  $$\p V_n\;=\; \{(0,1),\ldots, (0,n)\}\cup \{(1,n),\ldots, (n-1,n)\}\,,$$
see Figure \ref{fig2} for an illustration. 
Above we have that 
 \begin{equation}\label{g}
 g_t^n(x,y)\;=\;n^2(\rho^n_t(x)-\rho^n_t(x+1))^2\,\cdot\,\Ind{\mc D_n}(x,y)\,,
 \end{equation}
 where the diagonal $\mc D_n$ is defined by
  \begin{equation*}
  \mc D_n\,=\,\{(x,y)\in V_n;\, y=x+1\}
  \end{equation*}
  and $\Ind{\Gamma}$ is the indicator function of the set $\Gamma$.

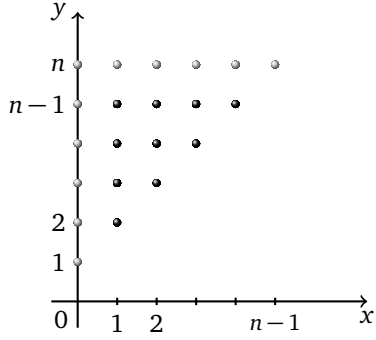
\begin{figure}[!htb]
\centering
\begin{tikzpicture}[thick, scale=0.7]
\draw[->] (-0.5,0)--(5.5,0) node[anchor=north]{$x$};
\draw[->] (0,-0.5)--(0,5.5) node[anchor=east]{$y$};
\begin{scope}[scale=0.75]
\foreach \x in {1,...,4} 
	\foreach \y in {\x,...,4}
		\shade[ball color=black](\x,1+\y) circle (0.1);
		
\foreach \x in {0,...,5} 
	\shade[ball color=gray!50](\x,6) circle (0.1); 
	 
\foreach \y in {1,...,5} 
	\shade[ball color=gray!50](0,\y) circle (0.1);  
\end{scope}	
\draw (0,0) node[anchor=north east] {$0$};
\draw (0.75,2pt)--(0.75,-2pt) node[anchor=north] {$1$};
\draw (1.5,2pt)--(1.5,-2pt) node[anchor=north]{$2$};
\draw (2.25,2pt)--(2.25,-2pt);
\draw (3,2pt)--(3,-2pt);
\draw (3.75,2pt)--(3.75,-2pt) node[anchor=north]{\small $n-1$};
\draw (-0.05,.75) node[anchor=east] {$1$};
\draw (-0.05,1.5) node[anchor=east]{$2$};
\draw (-0.05,3.75) node[anchor= east]{$n-1$};
\draw (-0.05,4.5) node[anchor=east]{$n$};
\end{tikzpicture}
\caption{Black balls are elements of $V_n$ and gray balls are elements of $\p V_n$.}\label{fig2}
\end{figure}

Above,  $\Vert \cdot \Vert$ denotes the supremum norm.
Note that $\A_n$ is the generator of a random walk in $V_n\cup \p V_n$, , denoted by $\{X_{tn^2}; \,t\geq 0\}$, which has  jump rates given by $c_n(u,v)$ and  is absorbed at the boundary $\p V_n$. Denote by $\mathbf{P}_{u}$ and $\mathbf{E}_{u}$   the corresponding probability and expectation, respectively, starting from the position $u\in V_n$.
Now we introduce the function
 \begin{equation}\label{eq17}
\phi_t^n(x,y)\;=\; {\bf E}_{(x,y)}\Big[\varphi^n_0( X_{tn^2})+ \int_0^t g^n_{t-s}( X_{sn^2})\,ds\Big]\,,
\end{equation}  
with $\varphi^n_0$ and $g^n_t$ given above. 
Since ${\bf E}_{(x,y)}[f( X_{tn^2})]=(e^{tn^2\A_n}f)(x,y)$ is a semigroup and by using Kolmogorov's forward equation and Leibniz Integral Rule, we can show that  the function $\phi_t^n$ is solution of the semi-linear system \eqref{ODEsystem_phi}, so that $\phi_t^n=\varphi_t^n$. 
Therefore, in order to prove the proposition we just have to estimate the two terms at the right hand side of  last display, since 
\begin{equation*}\label{eq17a}
\max_{(x,y)\in V_n}|\varphi_t^n(x,y)|\;\leq\;\max_{(x,y)\in V_n}|\varphi^n_0(x,y)|+ \max_{(x,y)\in V_n}\Big|{\bf E}_{(x,y)}\Big[\int_0^t g^n_{t-s}( X_{sn^2})\,ds\Big]\Big|\,.
\end{equation*}
From Assumption \ref{assumption2},  the first term on the right hand side of last expression is bounded from above by $c/n$.
It remains to deal with the second term. Note that since
the operator $n^2\A_n$ is a bounded operator (for $n$ fixed) it generates  a uniformly continuous semigroup $\{e^{sn^2\A_n};\,s\geq 0\}$ on $ V_n\cup\p V_n$.
By Fubini's Theorem
\begin{equation*}\label{eq20a}
{\bf E}_{(x,y)}\Big[\int_0^t g^n_{t-s}( X_{sn^2})\,ds\Big]= \int_0^t \big(e^{sn^2\A_n}g^n_{t-s}\big)(x,y)\,ds\,.
\end{equation*}
Changing variables, the right hand side of last expression can be written as 
$$\int_0^t \big(e^{(t-r)n^2\A_n}g^n_r\big)(x,y)\,dr \,.$$
Thus, the proof ends as a consequence of  the next lemma.

\end{proof}

Before stating the next lemma, we notice that for $u,v\in V_n\cup\p V_n$
\begin{equation}\label{eq18}
e^{tn^2\A_n}(u,v)={\bf P}_{u}\Big[X_{tn^2}=v\Big]\,.
\end{equation}

\begin{lemma}
There exists $C>0$ which does not depend on $n$ such that
\begin{equation*}
\sup_{t\geq 0}\max_{(x,y)\in V_n}\Big|\int_0^t \big(e^{(t-r)n^2\A_n}g^n_r\big)(x,y)\,dr\Big|\;\leq \; \frac{C}{n}\,.
\end{equation*}
\end{lemma}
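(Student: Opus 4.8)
The plan is to pass to the probabilistic representation already recorded in \eqref{eq17}--\eqref{eq18}: since $(e^{sn^2\A_n}f)(u)=\mathbf{E}_{u}[f(X_{sn^2})]$, the quantity to be estimated equals
\[
\int_0^t\big(e^{(t-r)n^2\A_n}g^n_r\big)(x,y)\,dr=\mathbf{E}_{(x,y)}\Big[\int_0^t g^n_{t-s}(X_{sn^2})\,ds\Big],
\]
and, crucially, $g^n_r$ is supported on the diagonal $\mc D_n$, which acts as a reflecting boundary for the walk $X$. Writing $g^n_r(z,z+1)=n^2\big(\rho^n_r(z)-\rho^n_r(z+1)\big)^2$, the first step is to control the source by a constant. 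Using the discrete heat equation \eqref{disc_heat} solved by $\rho^n_r$ together with Assumption \ref{assumption1}, one establishes an interior gradient estimate $\max_{1\le z\le n-2}|\rho^n_r(z+1)-\rho^n_r(z)|\le C/n$, uniform in $r\ge 0$, whence $\sup_{r\ge 0}\max_z g^n_r(z,z+1)\le C$.

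With this uniform bound on the source, I would estimate
\[
\Big|\int_0^t\big(e^{(t-r)n^2\A_n}g^n_r\big)(x,y)\,dr\Big|\le C\int_0^t\mathbf{P}_{(x,y)}\big[X_{(t-r)n^2}\in\mc D_n\big]\,dr=C\,\mathbf{E}_{(x,y)}\big[L_t\big]\le C\,\mathbf{E}_{(x,y)}\big[L_\infty\big],
\]
where $L_t=\int_0^t\mathbf 1\{X_{sn^2}\in\mc D_n\}\,ds$ is the occupation time of the diagonal and the change of variables $s=t-r$ has been used. This renders the bound automatically uniform in $t$ and reduces the lemma to the single estimate $\mathbf{E}_{(x,y)}[L_\infty]\le C/n$.

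To prove this occupation-time bound I would exploit the geometry visible in Figure \ref{fig2}: $V_n$ is a triangle whose hypotenuse is the reflecting diagonal $\mc D_n$ and whose two legs, $x=0$ and $y=n$, carry the absorbing boundary $\partial V_n$. Reflecting $V_n$ across the diagonal unfolds it into the square $\{0\le x,y\le n\}$, on which the reflected process is a pair of one-dimensional walks with independent coordinates, each living on $\{0,\dots,n\}$ and absorbed at the two endpoints at the slow rate $1/n$. Under this unfolding the occupation of $\mc D_n$ becomes, up to a bounded factor, the occupation of $\{|x-y|=1\}$ in the square, which by the product structure and symmetry of the walk is a constant multiple of the one-dimensional Green's function $G_n(x,y)=\int_0^\infty\mathbf{P}_{x}[Z_{sn^2}=y]\,ds$. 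Finally I would estimate $G_n$ by solving the resolvent equation defining it, whose bulk part is a discrete Laplacian: the solution is the tent-shaped profile, and although the slow absorption extends the effective domain by $O(n)$ and makes the walk nearly recurrent, its peak value stays of order $1/n$, giving $\mathbf{E}_{(x,y)}[L_\infty]\le C/n$.

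The main obstacle is precisely this occupation-time estimate in the presence of a \emph{slow} boundary: because absorption occurs only at rate $1/n$, the walk is almost recurrent and spends a macroscopic amount of time inside $V_n$, so one must show that a long lifetime competing against a diagonal of relative size $1/n$ combine to give exactly the order $1/n$ and no worse; the reflection reduction to an explicit one-dimensional Green's function is what makes this tractable. A secondary difficulty, feeding into the first step, is the uniform-in-time interior gradient estimate for $\rho^n_r$: the discrete profile develops an $O(1)$ gradient across the boundary bonds $(0,1)$ and $(n-1,n)$ — as is already apparent in the stationary profile \eqref{stat_profile_disc}, where $\rho^n_{ss}(1)-\alpha\approx(\beta-\alpha)/3$ — so a naive maximum principle for the gradient is insufficient and one must control the boundary layer in order to keep the \emph{interior} gradients, which are the only ones entering $g^n_r$, at order $1/n$.
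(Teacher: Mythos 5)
Your first two steps coincide with the paper's proof and are sound: by positivity of the semigroup and $|g^n_r|\le S_n\,\Ind{\mc D_n}$ with $S_n\le C$ (the interior gradient bound $|\rho^n_r(z+1)-\rho^n_r(z)|\le C/n$, which the paper also does not prove inside this lemma but defers to Proposition \ref{disc_cont_prox}; your closing remarks correctly identify the boundary-layer obstruction that makes it non-trivial), the lemma reduces to showing that the expected occupation time of the diagonal, $\mathbf{E}_{(x,y)}\big[\int_0^\infty \Ind{X_{sn^2}\in\mc D_n}\,ds\big]$, is $O(1/n)$ uniformly over starting points, and your change of variables making the bound uniform in $t$ is exactly the paper's. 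At that point, however, the paper simply quotes this occupation-time estimate from Section 3 of \cite{bmns}, whereas you propose to prove it, and your argument for it contains a genuine gap.

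The gap is the unfolding claim: reflecting $V_n$ across $\mc D_n$ does \emph{not} turn $X$ into a pair of one-dimensional walks with independent coordinates. The walk generated by $\A_n$ is the two-particle exclusion dynamics arising in the derivation of \eqref{ODEsystem_phi}: from a diagonal site $(x,x+1)$ the only available jumps are to $(x-1,x+1)$ and $(x,x+2)$ (total rate $2$); the two jumps that would make the particles collide are \emph{suppressed}, not redirected, and in particular the pair can never slide along the diagonal. A pair of independent walks at adjacent sites jumps at total rate $4$, and two of those jumps land on the collision set $\{x=y\}$; hence the image of an independent pair under the folding map $(x,y)\mapsto(\min(x,y),\max(x,y))$ is a Markov process on $\{x\le y\}$, including the diagonal $\{x=y\}$ and possessing sliding moves, which is not $X$. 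What survives of the reflection idea is only a statement about the marginal law of the difference coordinate $D=y-x$ in the bulk, and that marginal is useless here: without absorption $D$ is recurrent, so the occupation time of $\{D=1\}$ is infinite, and the entire content of the estimate lies in the absorption — which is genuinely two-dimensional. Killing occurs at rate $1/n$ only from the sites $\{x=1\}\cup\{y=n-1\}$, a condition that is not a function of $y-x$; worse, at those sites the unit-rate jumps across the legs are also suppressed, so near the legs $D$ is not even autonomous and acquires a drift \emph{toward} the diagonal. Consequently the asserted reduction of $\mathbf{E}_{(x,y)}[L_\infty]$ to ``a constant multiple of the one-dimensional Green's function'' does not follow; one would have to prove a non-trivial comparison between the exclusion pair and independent killed walks, or argue directly in two dimensions, which is what the reference \cite{bmns} invoked by the paper actually does. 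A symptom of the difficulty: the natural one-dimensional supersolution $h(x,y)=f(y-x)$ with $f$ linear, decreasing with slope $-1/2$, satisfies $-\A_n h\ge \Ind{\mc D_n}$ in the bulk and on the diagonal, but the inequality fails at the sites adjacent to $\p V_n$, precisely because of the leg reflection described above.
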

\begin{proof}
Since the function $g^n_r$ defined in \eqref{g} is supported on the diagonal $\mc D_n$, we can rewrite  $(e^{(t-r)n^2\A_n}g^n_r)(x,y)$ as 
\begin{equation*}
\sum_{z=1}^{n-2}e^{(t-r)n^2\A_n}\big((x,y),\,(z,z+1)\big)\,g^n_r(z,z+1)\,.
\end{equation*}
Then, for all $(x,y)\in V_n$,
\begin{equation}\label{eq22}
\Big|\int_0^t \big(e^{(t-r)n^2\A_n}g^n_r\big)(x,y)\,dr\Big|\;\leq \; S_n\cdot\int_0^t \sum_{z=1}^{n-2}e^{(t-r)n^2\A_n}\big((x,y),\,(z,z+1)\big)\,dr\,,
\end{equation}
where 
\begin{equation}\label{S_n}
S_n\;=\;\sup_{ r\geq 0}\max_{z\in\{1,\dots,n-2\}}|g^n_r(z,z+1)|\,.
\end{equation}
 First we will estimate the time integral at the right hand side of \eqref{eq22} and then we will estimate $S_n$.
  By  \eqref{eq18} together with  a change of   variables and by the  definition of  $\mc D_n$, we get
\begin{equation*}
\int_0^{tn^2}\sum_{z=1}^{n-2}{\bf P}_{(x,y)}\big[X_{s}=(z,z+1)\big]\,\frac{ds}{n^2}
\;=\;
\int_0^{tn^2}{\bf P}_{(x,y)}\big[X_{s}\in \mc D_n\big]\,\frac{ds}{n^2}\,.
\end{equation*}
 Extending the interval of integration to infinity and applying Fubini's Theorem on  the last integral, we bound it from above by 
\begin{equation*}
\frac{1}{n^2}\,{\bf E}_{(x,y)}\Big[\int_0^{\infty}\Ind{X_s \in \mc D_n}\,ds\Big]\,.
\end{equation*}
Notice that the expectation above is  the total time spent by the random walk $\{ X_s;\,s\geq 0\}$ on the diagonal $\mc D_n$. By Section 3 of \cite{bmns}, we have the following bound
\begin{equation*}
{\bf E}_{(x,y)}\Big[\int_0^{\infty}\Ind{X_s \in \mc D_n}\,ds\Big]\;\leq\;C\,n\,,
\end{equation*}
 for all $(x,y)\in V_n$.
Thus  the integral at the right hand side of \eqref{eq22} is bounded from above by $C/n$. In order  to conclude the proof, we need to prove that  $S_n$, which was defined in \eqref{S_n},  is bounded. By the definition of $g_r^n$ given in \eqref{g}, it is enough to prove that 
\begin{equation*}
\big|\rho^n_t(x+1)-\rho^n_t(x)\big|\;\leq\;\frac{C}{n}\,,
\end{equation*}
for all $x\in\{1,\dots,n-2\}$ and uniformly in $t\geq 0$ and this  follows from Proposition \ref{disc_cont_prox}.
\end{proof}
\subsection{Estimates for the discrete equation}
 \begin{proposition} \label{disc_cont_prox}Let $\rho^n_t(\cdot)$ be the solution of  \eqref{disc_heat}. Then, there exists $C>0$ which does not depend on $n$ such that
 \begin{equation}\label{eq23}
\big|\rho^n_t(x+1)-\rho^n_t(x)\big|\;\leq\;\frac{C}{n}\,,
\end{equation}
for all $x\in\{1,\dots,n-2\}$,  uniformly in $t\geq 0$.
 \end{proposition}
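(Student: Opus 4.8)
The plan is to remove the non-trivial boundary data by subtracting the stationary profile, and then to control the increment of the remaining homogeneous part by a coalescing coupling of two random walks, exploiting the slowness of the reservoirs.

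First I would write $\rho^n_t=\rho^n_{ss}+w^n_t$, where $\rho^n_{ss}$ is the stationary solution given explicitly in \eqref{stat_profile_disc}. Its interior increments are constant and equal to $a_n=\tfrac{\beta-\alpha}{3n-2}$, hence of order $1/n$; so it suffices to bound $|w^n_t(x+1)-w^n_t(x)|$ for $x\in\{1,\dots,n-2\}$. By linearity of $\mc B_n$ and stationarity of $\rho^n_{ss}$, the function $w^n_t$ solves the homogeneous version of \eqref{disc_heat} with vanishing boundary data $w^n_t(0)=w^n_t(n)=0$, and $\|w^n_0\|_\infty\le 1$. This permits the Feynman--Kac representation $w^n_t(z)=\mathbf E_z[w^n_0(X_{tn^2})\,\mathbf 1\{\tau>tn^2\}]$, where $\{X_s\}$ is the random walk generated by $\mc B_n$ of \eqref{op_B} (the one-dimensional analogue of the walk already used in Section \ref{s7}) absorbed at $\{0,n\}$, with absorption time $\tau$.

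To estimate the increment I would run two copies $X$ and $Y$ of this walk, started at $x+1$ and $x$ respectively, under a coupling that forbids crossings and lets them move together once they meet. Expressing $w^n_t(x+1)-w^n_t(x)$ as a single expectation over the coupled pair, the two contributions cancel on the event that the walks have coalesced before $tn^2\wedge(\tau^X\wedge\tau^Y)$, since after coalescence they share both their position and their absorption time. Therefore
\[
\big|w^n_t(x+1)-w^n_t(x)\big|\;\le\;\|w^n_0\|_\infty\;\mathbf P\big[\,X,Y\text{ do not coalesce before } tn^2\wedge(\tau^X\wedge\tau^Y)\,\big],
\]
and it remains to show that this probability is $O(1/n)$.

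The main obstacle, and the point where the slowed boundary enters decisively, is the control of this non-coalescence probability; a direct maximum principle on the increment is awkward because the boundary bonds feed into the evolution of the interior bonds with coefficients of order $n$ and $n^2$, which is exactly why I prefer the probabilistic route. In the bulk two adjacent copies coalesce on the microscopic scale $n^{-2}$, so the only dangerous scenario is a \emph{differential absorption}: one walk exits while the other does not, without prior coalescence. This can happen only from a boundary configuration, e.g. with the lower walk at site $1$, where the exit rate into the reservoir is of order $n$ (the factor $1/n$ in \eqref{op_B} accelerated by $n^2$), whereas the rate at which the two adjacent walks coalesce is of order $n^2$. Thus on each boundary visit the differential-absorption probability is $O(1/n)$, and assembling the (geometrically controlled) successive visits keeps the total failure probability at $O(1/n)$; this bookkeeping near the slow boundary is the quantitative heart of the argument. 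Finally, uniformity in $t$ follows by combining this estimate, which is effective for $t$ bounded away from $0$ (in particular as $t\to\infty$, when both walks are absorbed almost surely), with the short-time regime, where the coupling is ineffective and the order-$1/n$ interior gradient is instead inherited from the regularity of the initial profile (Assumption \ref{assumption1}, automatic in the stationary and local-Gibbs cases) and propagated by the parabolic maximum principle for the increment on the bulk bonds.
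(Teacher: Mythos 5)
Your opening moves are sound and genuinely different from the paper's: subtracting $\rho^n_{ss}$ removes the boundary data exactly (the paper instead subtracts the continuous solution of a modified equation \eqref{eq_hydro_desc} and must then control discretization errors), the Feynman--Kac formula for $w^n_t$ is correct, and so is the reduction of the increment to a non-coalescence probability. The gap is that after this reduction the entire content of the proposition sits in the claim that this probability is $O(1/n)$, and your justification --- each boundary visit costs $O(1/n)$ and the successive visits are ``geometrically controlled'' --- is not an argument. After a boundary visit that produces neither absorption nor coalescence, the pair does not restart an independent Bernoulli trial: the walks can separate, diffuse, and return to the slow boundary an unbounded number of times before coalescing, and no $n$-independent geometric domination of the number of returns is available. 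What is actually needed is that the expected number of visits of the coalescing pair to $\{Y=1\}\cup\{X=n-1\}$ before coalescence is $O(1)$ uniformly over the starting bond. This is true, but it is a genuine two-dimensional potential-theory estimate for the pair process in the triangle $\{0<y<x<n\}$, killed at the diagonal and reflected at the slow sides: the $O(1)$ comes from the decay of the Green's function in the mixed Dirichlet--Neumann wedge formed at the corners (a priori, without this corner effect, the visit count would even carry a $\log n$). Nothing in your proposal proves this, and it is at least as heavy as the two-particle analysis the paper reserves for Section \ref{s7}. By contrast, the paper's proof of the present proposition is strictly one-particle: Duhamel for $\gamma^n_t=\rho^n_t-\rho_t(\cdot/n)$ with a source that is $O(n^{-2})$ in the bulk and $O(1)$ only at the sites $1$ and $n-1$, plus the expected occupation time of a single absorbed walk at those two sites, computed exactly from a linear elliptic equation and equal to $O(1/n)$.

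Two further points. First, your short-time patch misreads the hypotheses: Assumption \ref{assumption1} only gives $|\rho^n_0(x)-\rho_0(\tfrac xn)|\le C_1/n$ for a merely measurable profile $\rho_0$ (Assumption \ref{assumption0}), so it yields no bound at all on $\rho^n_0(x+1)-\rho^n_0(x)$, which can be of order one (take a product measure with $\rho_0=\mathbf{1}_{[1/2,1]}$); hence the ``order-$1/n$ interior gradient inherited from Assumption \ref{assumption1}'' does not exist at time zero, and your coupling bound is vacuous for $tn^2=O(1)$. (To be fair, the paper's proof silently uses gradient bounds on $\rho_t$ that degenerate as $t\to 0$ for rough $\rho_0$, so uniformity in $t\ge 0$ is a soft spot for everyone.) Second, the direct route you dismissed as awkward actually works and is simpler than either proof: the increment $v^n_t(x)=\rho^n_t(x+1)-\rho^n_t(x)$ solves a discrete heat equation with Dirichlet conditions at the ghost sites $0$ and $n-1$ and a source supported on the two extreme bonds of size at most $n$ (namely $n|\alpha-\rho^n_t(1)|$ and $n|\beta-\rho^n_t(n-1)|$); by Duhamel its contribution is at most $n$ times the all-time occupation of those two sites by the Dirichlet-killed walk, which is $O(n^{-2})$, so $\|v^n_t\|_\infty\le\|v^n_0\|_\infty+C/n$ for all $t\ge 0$. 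The order-$n$ boundary feedback you were avoiding is harmless precisely because it is confined to two sites of tiny occupation time, and had you carried this out, the coalescence coupling would be unnecessary.
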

 \begin{proof} 
 Let $\rho(t,u)$ be the solution of the equation
 \begin{equation}\label{eq_hydro_desc}
\begin{cases}
\p_t \rho(t,u)\;=\; \p_u^2 \rho(t,u)\,, & \textrm{ for } t>0\,,\, u\in (0,1)\,,\\
\p_u \rho(t,0) \;=\; \rho(t,0^+)-\rho(t,0)\,, & \textrm{ for } t>0\,,\\
\p_u \rho(t,1) \;=\; \rho(t,1)-\rho(t,1^-)\,, & \textrm{ for } t>0\,,\\
\rho(t,0)\;=\;\alpha\,,\;\; \rho(t,1)=\beta\,,& \textrm{ for } t>0\,,\\
\rho(0,u)\;=\;\rho_0(u)\,,& u\in [0,1]\,.
\end{cases}
\end{equation}
We notice that $\rho_t$ is essentially the solution of the hydrodynamic equation given in \eqref{hydroeq},  but discontinuous at $0$ and $1$. By the Corollary \ref{cor43}, we have assured the smoothness of $\rho(t,u)$ in $(0,1)$.

Let  $\gamma^n_t(x):=\rho^n_t(x)-\rho_{t}(\tfrac xn)\,$  for $x\in{\Sigma_n}\cup\{0,n\}$.  Then $\gamma^n_t$ satisfies the equation
\begin{equation}\label{eq28}
\left\{
\begin{array}{ll}
 \partial_t\gamma_t^n(x)\,=\,(n^2\mc B_n \gamma_t^n)(x)+F_t^n(x)\,, \;\; x\in\Sigma_n\,,\;\;t \geq 0\,,\\
 \gamma_t^n(0)=0\,, \quad \gamma^n_t(n)=0\,, \;\;t \geq 0\,,\\
\end{array}
\right.
\end{equation}
where, for $x\in\{2,\ldots,n-2\}$, $F_t^n$ accounts for the difference between discrete and continuous Laplacians and, for $x
\in\{1,n-1\}$,  $F_t^n(x)=(n^2\mathcal{B}_n-\partial_u^2)\rho_t(\tfrac xn)$.

In order to prove \eqref{eq23}, we add and subtract $\rho_{t}\big(\frac{x+1}{n}\big)$ and $\rho_{t}\big(\frac{x}{n}\big)$ to $|\rho^n_t(x+1)-\rho^n_t(x)|$ and  use the triangle inequality to have that 
\begin{equation*}
\big|\rho^n_t(x+1)-\rho^n_t(x)\big|\;\leq\;\big|\gamma^n_t(x+1)\big|+\big|\gamma^n_t(x)\big|+\Big|\rho_{t}(\tfrac{x+1}{n})-\rho_{t}(\tfrac{x}{n})\Big|\,.
\end{equation*}
Since $\rho_t$ is smooth in $(0,1)$, it remains to show that $\gamma^n_t$ is bounded by $c/n$.  For that purpose, 
let $\{\XX_s,\, s\geq 0\}$ be the random walk on ${\Sigma_n}\cup\{0,n\}$, with generator $\mc B_n$, absorbed at the boundaries $\{0,n\}$. Denote by $E_x$ the  expectation with respect to the probability induced by the generator $\mc B_n$ and the initial position $x$.  As before,  we can write the solution of  \eqref{eq28}
  as 
\begin{equation*}
\gamma_t^n(x)\;=\; E_x\Big[\gamma^n_0(\XX_{tn^2})+\int_{0}^t F_{t-s}^n(\XX_{sn^2})\,ds\Big]\,.
\end{equation*}
Then,
\begin{equation*}
\sup_{t\geq 0}\max_{x\in\Sigma_n}|\gamma_t^n(z)|\;\leq\; \max_{x\in \Sigma_n}|\gamma^n_0(x)|\;+\;\sup_{t\geq 0}\max_{x\in \Sigma_n}\Big|E_x\Big[\int_{0}^t F_{t-s}^n(\XX_{sn^2})\,ds\Big]\Big|\,.
\end{equation*}
Since $\gamma^n_0(x)=|\rho^n_0(x)-\rho_0(x)|$, by Assumption \ref{assumption1} we only need to control the second  term at the right hand side of the previous expression.
Repeating the same strategy as before, we decompose the expectation above into the possible positions of the chain at time $s$ and we are left to estimate 
\begin{equation}\label{eq:imp}
\int_{0}^{t}\sum_{z=1}^{n-1}P_x\Big[\XX_{sn^2}=z\Big]\cdot F_{t-s}^n(z)\,ds\,.
\end{equation}
Since the discrete Laplacian approximates the continuous Laplacian, we conclude  that  $F_t^n(x)\le{C/n^2}$ for any $x\in\{2,\ldots,n-2\}$ and for any $t\geq 0$. Therefore, we can bound \eqref{eq:imp} by
\begin{equation}\label{713}
\frac{C}{n^2}+\sum_{k\in \{1,n-1\}} E_x\Big[\int_{0}^{\infty}\textbf{1}_{\{\XX_{sn^2}=k\}}\, ds\Big]\cdot|F_t^n(k)|\,.
\end{equation}
 Moreover,  we also have that
\begin{equation*}
\begin{split}
F_t^n(1)\;=\;&n^2\,\Big(\rho_t\big(\pfrac{2}{n}\big)-\rho_t\Big(\pfrac{1}{n}\Big)\Big)-n\,\Big(\rho_t\big(\pfrac{0}{n}\big)-\rho_t\big(\pfrac{1}{n}\big)\Big)-\partial_u\rho_t\big(\pfrac{1}{n}\big)\\
\;=\;&n\,\Big(\partial_u\rho_t\big(\pfrac{1}{n}\big)-\rho_t\big(\pfrac{0}{n}\big)-\rho_t\big(\pfrac{1}{n}\big)\Big)+O(1)\,,
\end{split}
\end{equation*}
and by the boundary conditions in \eqref{eq_hydro_desc} we obtain that $|F_t^n(1)|\leq  C$ for any $t\geq 0$. For $k=n-1$ we obtain exactly the same bound as for $k=1$. 

The  expectation in \eqref{713} is  the average time spent by the random walk at the site $k$ until its absorption. As an application of the Markov Property, it can be expressed as the solution of the elliptic equation  
\begin{equation*}
\left\{
\begin{array}{ll}
-\mc B_n\psi^n(x)\;=\;C\delta_{x=k}\,, \;\;\forall\; x\in\Sigma_n\,,\\
 \psi^n(0)\;=\;0\,, \quad \psi^n(n)\;=\;0\,,\\
\end{array}
\right.
\end{equation*}
 where $C$ is a constant.
 A simple computation shows that, for $k=1$, 
 \begin{equation*}
 \psi^n(x)\;=\;-\frac{1}{3n^2-2n}x+\frac{2n-1}{3n^2-2n}\,,\qquad \forall \;x\in\Sigma_n\,, 
\end{equation*}
so that $\max_{x=1,\ldots,n-1}|\psi^n(x)|\leq  C/n$. For $k=n-1$ the same bound holds. Putting all the estimates together,  the proof ends. 
\end{proof}

\section*{Acknowledgements}
T. F. is supported by FAPESB through the project Jovem Cientista-9922/2015. P. G. thanks  FCT/Portugal for support through the project 
UID/MAT/04459/2013. A. N. thanks FAPERGS and L'OREAL for support through the projects  002063-2551/13-0 and ``L'OR\' EAL - ABC - UNESCO Para Mulheres na Ci\^encia'', respectively.

\bibliographystyle{plain}
\bibliography{bibliography}

\begin{thebibliography}{10}

\bibitem{bmns}
R.~Baldasso, O.~Menezes, A.~Neumann, and R.~Souza.
\newblock {Exclusion process with slow boundary}.
\newblock {\em http://arxiv.org/abs/1407.7918}, 2014.

\bibitem{birkhoff89}
Garrett Birkhoff and Gian-Carlo Rota.
\newblock {\em {Ordinary Differential Equations}}.
\newblock Wiley, New York, fourth edition, 1989.

\bibitem{blytheevans}
R.~A. Blythe and M.~R. Evans.
\newblock Nonequilibrium steady states of matrix-product form: a solver's
  guide.
\newblock {\em Journal of Physics A: Mathematical and Theoretical},
  40(46):R333, 2007.

\bibitem{Yau1992}
Chih-Chung Chang and Horng-Tzer Yau.
\newblock Fluctuations of one dimensional ginzburg-landau models in
  nonequilibrium.
\newblock {\em Communications in Mathematical Physics}, 145(2):209--234, 1992.

\bibitem{mariaeulalia1}
A.~De~Masi, E.~Presutti, D.~Tsagkarogiannis, and M.E. Vares.
\newblock Current reservoirs in the simple exclusion process.
\newblock {\em Journal of Statistical Physics}, 144(6):1151--1170, 2011.

\bibitem{mariaeulalia3}
A.~De~Masi, E.~Presutti, D.~Tsagkarogiannis, and M.E. Vares.
\newblock Non equilibrium stationary state for the sep with births and deaths.
\newblock {\em Journal of Statistical Physics}, 147(3):519--528, 2012.

\bibitem{mariaeulalia2}
A.~De~Masi, E.~Presutti, D.~Tsagkarogiannis, and M.E. Vares.
\newblock Truncated correlations in the stirring process with births and
  deaths.
\newblock {\em Electron. J. Probab.}, 17(6):1--35, 2012.

\bibitem{d}
B.~Derrida.
\newblock {Non-equilibrium steady states: fluctuations and large deviations of
  the density and of the current}.
\newblock {\em J. Stat. Mech.}, 2007.

\bibitem{GJ2016}
P.~Gon{\c c}alves and M.~Jara.
\newblock Stochastic burgers equation from long range exclusion interactions.
\newblock {\em ArXiv e-prints}, 2016.

\bibitem{jaralandim2008}
M.~D. Jara and C.~Landim.
\newblock Quenched non-equilibrium central limit theorem for a tagged particle
  in the exclusion process with bond disorder.
\newblock {\em Ann. Inst. H. Poincar\'e Probab. Statist.}, 44(2):341--361, 04
  2008.

\bibitem{kl}
C.~Kipnis and C.~Landim.
\newblock {\em {Scaling limits of interacting particle systems}}, volume 320 of
  {\em {Grundlehren der mathematischen Wissenschaften}}.
\newblock Springer-Verlag Berlin Heidelberg, 1st edition, 1999.

\bibitem{lmo}
L~Landim, A.~Milan\'es, and S.~Olla.
\newblock {Stationary and Nonequilibrium Fluctuations in Boundary Driven
  Exclusion Processes}.
\newblock {\em Markov Processes And Related Fields}, 14(2):165--184, 2008.

\bibitem{Mitoma}
I.~Mitoma.
\newblock {Tightness of Probabilities On ${C}(\lbrack 0, 1 \rbrack;
  \mathscr{Y}')$ and ${D}(\lbrack 0, 1 \rbrack; \mathscr{Y}')$}.
\newblock {\em Ann. Probab.}, 11(4):989--999, 1983.

\bibitem{Ravi1992}
K.~Ravishankar.
\newblock {Fluctuations from the hydrodynamical limit for the symmetric simple
  exclusion in $\mathbb{Z}^d$}.
\newblock {\em Stoch. Proc. Appl.}, 42(1):31--37, 1992.

\bibitem{reedsimon}
M.~Reed and B.~Simon.
\newblock {\em {Methods of Modern Mathematical Physics I: Functional
  Analysis}}.
\newblock Academic Press, first edition, 1981.

\bibitem{ry}
D.~Revuz and M.~Yor.
\newblock {\em Continuous martingales and {B}rownian motion}, volume 293 of
  {\em Grundlehren der Mathematischen Wissenschaften [Fundamental Principles of
  Mathematical Sciences]}.
\newblock Springer-Verlag, Berlin, third edition, 1999.

\end{thebibliography}

\end{document}